\numberwithin{equation}{section}
\newtheorem{thm}{Theorem}[section]
\newtheorem{prop}[thm]{Proposition}
\newtheorem{defi}[thm]{Definition}
\newtheorem{rmk}[thm]{Remark}
\newtheorem{lem}[thm]{Lemma}
\crefname{Lemma}{Lemma}{Lemmas}
\newcommand{\PP}{\mathbb{P}}
\newcommand{\ZZ}{\mathbb{Z}}
\newcommand{\QQ}{\mathbb{Q}}
\newcommand{\RR}{\mathbb{R}}
\newcommand{\CC}{\mathbb{C}}
\newcommand{\NN}{\mathbb{N}}
\newcommand{\EE}{\mathbb{E}}
\newcommand{\TT}{\mathbb{T}}
\newcommand{\ti}{\textit}
\newcommand{\Cl}{\text{Cl}}
\newcommand{\mfa}{\mathfrak{a}}
\newcommand{\mfb}{\mathfrak{b}}
\newcommand{\mfd}{\mathfrak{d}}
\newcommand{\mfm}{\mathfrak{m}}
\newcommand{\mfn}{\mathfrak{n}}
\newcommand{\mfp}{\mathfrak{p}}
\newcommand{\Mod}{\text{Mod}}
\newcommand\norm[1]{\left\lVert#1\right\rVert}
\newcommand{\vertiii}[1]{{\left\vert\kern-0.25ex\left\vert\kern-0.25ex\left\vert #1 
    \right\vert\kern-0.25ex\right\vert\kern-0.25ex\right\vert}}
\newcommand{\colim@}[2]{%
  \vtop{\m@th\ialign{##\cr
    \hfil$#1\operator@font colim$\hfil\cr
    \noalign{\nointerlineskip\kern1.5\ex@}#2\cr
    \noalign{\nointerlineskip\kern-\ex@}\cr}}%
}
\newcommand{\colim}{%
  \mathop{\mathpalette\colim@{\rightarrowfill@\scriptscriptstyle}}\nmlimits@
}
\renewcommand{\varprojlim}{%
  \mathop{\mathpalette\varlim@{\leftarrowfill@\scriptscriptstyle}}\nmlimits@
}
\renewcommand{\varinjlim}{%
  \mathop{\mathpalette\varlim@{\rightarrowfill@\scriptscriptstyle}}\nmlimits@
}
\begin{document}
\pagenumbering{arabic}
\colorlet{refkey}{blue!30}

\title{Pointwise ergodic theorem along primes of the form $x^2 + ny^2$}
\author{Jan Fornal}
\address{School of Mathematics, University of Bristol, Bristol BS8 1UG, UK}
\email{nc24166@bristol.ac.uk}
\date{\today}
\maketitle

\begin{abstract}
This paper resolves the question of pointwise convergence for ergodic averages of a single function along the set of polynomial values of primes of the form $x^2 + ny^2$. Following the influential paper of Bourgain \cite{bourgain1989pointwise}, we employ the Hardy-Littlewood circle method where major arc and minor arc estimates for the set of prime ideals constitute the main novelty of the paper. We also prove that our convergence results cannot be extended to class of $L^1$ functions. 
\end{abstract}

\tableofcontents 

\section{Introduction} \label{section-1}

Pointwise ergodic theory dates back to the work of Birkhoff who proved the pointwise ergodic theorem for ordinary ergodic averages. Motivated by a question of Furstenberg, Bourgain started investigating subsequential ergodic averages. These take the form:
\begin{equation}
  \frac{1}{M} \sum_{k \leq M} f(T^{a_k} x).
\end{equation}
More precisely, he published in Publications Math\'ematiques de l'IH\'ES \cite{bourgain1989pointwise} the highly creative and celebrated proof of the following:
\begin{thm} \label{ergodic-theorem-polynomial-images}
  Let $P$ be a polynomial with integer coefficients. Fix $q > 1$, a measure preserving system $(X, \mu, T)$. If $f \in L^q(X)$, then:
  \begin{equation}
    \frac{1}{m} \sum_{k \leq m} f(T^{P(k)} x)
  \end{equation}
  converges for almost all $x \in X$.
\end{thm}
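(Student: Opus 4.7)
The plan is to follow the broad architecture introduced by Bourgain: reduce to the integer shift, then establish an $\ell^q$ maximal inequality for the averaging operators by means of the Hardy--Littlewood circle method, and finally upgrade an a.e.\ convergence statement on a dense subclass to all of $L^q$.

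First I would invoke the Calder\'on transference principle to reduce the theorem to the case where $X = \ZZ$, $\mu$ is counting measure, and $T$ is the shift $T x = x - 1$. Concretely, it is enough to prove, for every $q > 1$, the maximal inequality
\begin{equation*}
  \Bigl\| \sup_{N \geq 1} |A_N f| \Bigr\|_{\ell^q(\ZZ)} \lesssim_q \|f\|_{\ell^q(\ZZ)},
  \qquad A_N f(x) = \frac{1}{N} \sum_{k \leq N} f(x - P(k)),
\end{equation*}
together with pointwise convergence of $A_N f$ on a dense subclass (e.g.\ finitely supported functions, for which $A_N f(x) \to 0$ trivially). Standard interpolation with the trivial $\ell^\infty$ bound then reduces the main estimate to the endpoint $q = 2$.

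Next I would pass to the Fourier side on $\TT = \RR / \ZZ$ and analyze the symbol $\widehat{K}_N(\xi) = N^{-1} \sum_{k \leq N} e(P(k)\xi)$. The Hardy--Littlewood circle method partitions $\TT$ into major arcs $\mathfrak{M}_{a/q}$ around rationals with small denominator and a minor arc complement $\mathfrak{m}$. On $\mathfrak{M}_{a/q}$ one uses the standard Gauss-sum approximation
\begin{equation*}
  \widehat{K}_N(a/q + \beta) \approx G(a,q) \, \Phi_N(\beta),
  \qquad G(a,q) = \frac{1}{q}\sum_{r \bmod q} e\!\left(\tfrac{a P(r)}{q}\right),
  \qquad \Phi_N(\beta) = \int_0^1 e(P(N t) \beta)\,dt,
\end{equation*}
so that the operator $A_N$ splits, up to error, into a sum over $q$ of a ``rational'' modulation by $G(a,q) e(ax/q)$ composed with a convolution whose symbol is $\Phi_N$. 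The continuous pieces $\Phi_N$ are dominated by the ordinary Hardy--Littlewood maximal function, while the arithmetic factors are handled by a multi-frequency (Magyar--Stein--Wainger style) multiplier lemma based on the quasi-orthogonality of characters with distinct denominators. On the minor arcs, Weyl's inequality yields the power saving $|\widehat{K}_N(\xi)| \lesssim N^{-\delta}$ for some $\delta = \delta(\deg P) > 0$.

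The main obstacle, and the reason Bourgain's paper was so influential, is that these symbol estimates are pointwise in $\xi$ and in $N$, whereas the maximal inequality requires control uniform in $N$. The plan here is the classical one: restrict first to dyadic times $N_j = 2^j$ and prove a long-variation (or oscillation) inequality
\begin{equation*}
  \Bigl\| \bigl( \sum_{j} |A_{N_{j+1}} f - A_{N_j} f|^2 \bigr)^{1/2} \Bigr\|_{\ell^2} \lesssim \|f\|_{\ell^2},
\end{equation*}
using the circle-method decomposition above and playing the $N^{-\delta}$ minor-arc gain against the number of major-arc denominators one must retain at each scale; then control the short variations between $N_j$ and $N_{j+1}$ by a square function argument. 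Combining the dyadic maximal bound with the short-variation estimate yields the full maximal inequality on $\ell^2$, and interpolation extends it to $\ell^q$ for all $q > 1$; together with convergence on the dense class, this finishes the proof.
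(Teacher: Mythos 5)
Your outline captures the right overall architecture for Bourgain's theorem (the result the paper cites rather than reproves), but there are two substantive gaps that would sink the argument as written.

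First, the closing claim that the $\ell^2$ maximal inequality can be upgraded to $\ell^q$ for all $q>1$ by interpolation is false. Interpolating the $\ell^2$ bound against the trivial $\ell^\infty$ bound only covers $q\geq 2$; there is no $\ell^1$ endpoint to interpolate with on the other side (indeed, Theorem \ref{divergence-theorem-special-prime-numbers} of the paper shows that for the prime analogues the $L^1$ endpoint genuinely fails). Getting $q\in(1,2)$ requires something much more delicate: a restricted weak-type / ``High--Low'' decomposition of the maximal operator, splitting $\sup_m |K_m\ast f|$ into a low part bounded with a loss of a power of $S$ on all $\ell^p$ and a high part with exponential gain $2^{-\delta S}$ on $\ell^2$, and then optimizing over $S$ (this is Proposition \ref{high-low-method-guarantees-maximal-lp-inequality} in the paper, and it in turn needs the partial $\ell^p$ maximal inequality and Bourgain's superorthogonality argument from Lemma \ref{l2-boundedness-of-telescoping-operator}). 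Your sketch has no ingredient that could supply the $q<2$ range.

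Second, the dense-subclass step as you state it does not work. You propose to finish by noting that $A_N f(x)\to 0$ trivially for finitely supported $f$ on $\ZZ$, but Calder\'on transference only transports the maximal (or, more generally, an $\ell^p$) \emph{inequality} back to the abstract system, not a pointwise convergence statement proved on $\ZZ$. To conclude a.e.\ convergence on an arbitrary measure-preserving system, one needs a dense subclass argument \emph{on that system}, and for subsequential averages there is no telescoping trick as in Birkhoff's theorem. This is precisely why Bourgain introduced oscillation/variation seminorms: a quantitative inequality of the type $\|\mathrm{Osc}_I(A_m f)\|_{L^2}\lesssim\|f\|_{L^2}$ transfers by Calder\'on, and a uniform-in-$I$ oscillation bound together with the maximal inequality and density of $L^2\cap L^p$ gives a.e.\ convergence on general $L^p$ (the paper alludes to this: ``a maximal ergodic inequality \emph{and} orthogonality''). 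You do mention long-variation estimates later, but frame them as a device for the maximal inequality rather than as the convergence mechanism; the proposal needs to make the oscillation/variation estimate an explicit deliverable of the circle-method analysis and then route a.e.\ convergence through it, rather than through a vacuous $\ZZ$-side dense-subclass observation.
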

In 1988, building off work of Bourgain, specifically the $L^2$ formulation of the below result, M\'at\'e Wierdl in his paper \cite{wierdl1988pointwise} managed to prove the following:

\begin{thm} \label{ergodic-theorem-prime-number-case}
  Let $\PP$ be the set of prime numbers. Fix $q > 1$ and suppose that $(X, \mu, T)$ is measure-preserving system. If $f \in L^q(X)$, then:
  \begin{equation}
    \frac{1}{|\{p \in \PP : p \leq m\}|} \sum_{p \leq m} f(T^p x)
  \end{equation}
  converges for almost all $x \in X$.
\end{thm}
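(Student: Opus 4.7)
The plan is to follow Wierdl's strategy, itself modelled on Bourgain's proof of Theorem~\ref{ergodic-theorem-polynomial-images}, via the Hardy--Littlewood circle method combined with Calder\'on's transference principle. Transference reduces the problem to its integer model: writing $a_N = |\{p \in \PP : p \leq N\}|$, we must establish a.e.\ convergence of $A_N f(n) = a_N^{-1} \sum_{p \leq N} f(n-p)$ for $f \in \ell^q(\ZZ)$. Partial summation and the prime number theorem permit replacing the raw prime indicator by the von Mangoldt weight $\Lambda$, so it suffices to study
\begin{equation*}
  \tilde A_N f(n) = \frac{1}{N} \sum_{k \leq N} \Lambda(k) f(n-k).
\end{equation*}
By the abstract Banach principle, a.e.\ convergence follows from (i) a strong maximal inequality $\|\sup_N |\tilde A_N f|\|_{\ell^q(\ZZ)} \lesssim_q \|f\|_{\ell^q(\ZZ)}$ and (ii) convergence on a dense subclass of $\ell^q(\ZZ)$, e.g.\ step functions supported on finite sets.

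For the maximal inequality I would analyse the Fourier multipliers $m_N(\xi) = N^{-1} \sum_{k \leq N} \Lambda(k) e(k\xi)$ via the circle method. Decompose the torus into major arcs $\mfm(a/q)$ around each reduced fraction $a/q$ with $q \leq (\log N)^B$, and a minor-arc complement. On the minor arcs Vinogradov's exponential sum bound gives $|m_N(\xi)| \lesssim_A (\log N)^{-A}$ for any $A > 0$, so Plancherel together with a dyadic square-function argument dominates the minor-arc maximal operator in $\ell^2$; interpolation with the trivial $\ell^\infty$ bound then handles all $q > 1$. On the major arcs Siegel--Walfisz yields the approximation
\begin{equation*}
  m_N(\xi) \approx \frac{\mu(q)}{\phi(q)} L_N(\xi - a/q), \qquad L_N(\eta) = \frac{1}{N} \sum_{k \leq N} e(k\eta),
\end{equation*}
so the major-arc contribution is modelled by
\begin{equation*}
  M_N(\xi) = \sum_{q \leq (\log N)^B} \sum_{(a,q)=1} \frac{\mu(q)}{\phi(q)}\, L_N(\xi - a/q)\, \psi_q(\xi - a/q),
\end{equation*}
with $\psi_q$ a smooth localisation to the corresponding arc.

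The hard step is the $\ell^q$ boundedness of the maximal function associated with $M_N$. Following Bourgain's entropy method, one splits $N$ dyadically, uses the Ramanujan-type structure of $\mu(q)/\phi(q)$ with arithmetic cutoffs to absorb the major-arc multipliers into an operator controlled by the continuous-time Hardy--Littlewood maximal operator lifted to $\ZZ$, and combines this with a jump/oscillation inequality to bound transitions between successive dyadic scales. Once the maximal inequality is in hand, convergence on a dense class follows from direct computation on Dirac masses and shift-invariance, and Calder\'on transference back to $(X,\mu,T)$ yields the stated a.e.\ convergence. The principal obstacle is the major-arc maximal bound as $q \downarrow 1$: the $(\log N)^{-A}$ savings from the minor arcs only barely offset the polylogarithmic growth in the number of major arcs, and it is precisely this fragile balance that collapses on $L^1$, consistent with the sharpness result promised in the abstract.
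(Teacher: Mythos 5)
The overall template you sketch — Calder\'on transference, replacing the prime indicator with the von Mangoldt weight, the circle method with major/minor arcs, and a Bourgain-style maximal estimate — is indeed the strategy both Wierdl and this paper follow (the paper runs exactly this program in Sections 3--6 for its variant $\PP_n$, citing Wierdl for the original case). However, your proposal has two genuine gaps.

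First, the passage from the maximal inequality back to almost everywhere convergence is wrong as stated. You propose to establish a.e.\ convergence on $\ZZ$ using step functions as a dense class, and then invoke ``Calder\'on transference back to $(X,\mu,T)$'' for the conclusion. But Calder\'on's transference principle transfers \emph{operator-norm inequalities} (maximal, square-function, variational), not a.e.\ convergence statements. On $\ZZ$ the averages of a compactly supported function trivially tend to $0$, which carries no information about the nontrivial limit on a general $(X,\mu,T)$. The dense-class (or orthogonality/oscillation) argument must be carried out on the abstract system, and for subsequential averages this is precisely the delicate part of the problem: simple dense classes like $T$-invariant functions plus coboundaries do not telescope along the primes. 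The paper sidesteps this by invoking Bourgain's orthogonality/oscillation procedure (explicitly flagged in Section \ref{section-5}, item (2), and quantified via Theorem \ref{variational-ergodic-theorem-special-prime-numbers}); a correct proof of Theorem \ref{ergodic-theorem-prime-number-case} needs some version of this — e.g.\ an oscillation or $r$-variation bound, or Bourgain's entropy/$L^2$ argument — and your proposal omits it entirely.

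Second, ``interpolation with the trivial $\ell^\infty$ bound then handles all $q > 1$'' is incorrect: interpolating an $\ell^2$ estimate with an $\ell^\infty$ estimate only covers the range $q \geq 2$. To reach $1 < q < 2$ one must interpolate toward an $\ell^1$ (or weak-$(1,1)$) endpoint, which is nontrivial here and is exactly what the High-Low decomposition in the paper (Proposition \ref{high-low-method-guarantees-maximal-lp-inequality} and the surrounding argument, via Lemma \ref{lp-partial-maximal-inequality} and Lemma \ref{l2-boundedness-of-telescoping-operator}) is engineered to provide. Your sketch gestures at Bourgain's entropy method and jump inequalities, but as written the interpolation step gives the wrong range of exponents. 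You should replace the $\ell^\infty$ endpoint by the restricted weak-type estimate produced by the High-Low split, and supply the dense-class/oscillation argument to close the a.e.\ convergence.
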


In the previous two theorems, by a measure-preserving system, we mean a $\sigma$-finite measure space equipped with a measure-preserving transformation; throughout $(X, \mu, T)$ will denote such a measure-preserving system. These two papers made a significant impact in the field of pointwise ergodic theory and in the author's opinion they changed the general perception of this area of analysis. Later on, mathematicians gave plenty of modifications regarding convergence of nonstandard ergodic averages. 
Our focus is the class of primes: 
\begin{equation}
  \PP_n = \{p \in \PP : \text{ there exists } u, v \in \ZZ \text{ so that } p = u^2 + nv^2 \},
\end{equation}
where $n$ is fixed for the rest of the paper. These primes have drawn the attention of various eminent mathematicians such as Fermat, Euler, Gauss, Hecke and Hilbert. They comply with various interesting principles coming from both analytic number theory and algebraic number theory. Using these primes, we will construct new types of ergodic averages and the main goal of this paper is to fully address the issue of pointwise convergence of these averages. The first result is qualitative:

\begin{thm} \label{ergodic-theorem-special-prime-numbers}
  Let $n$ be positive integer. Fix $q > 1$, a measure preserving system $(X, \mu, T)$ and a polynomial $P(x) \in \ZZ[x]$. If $f \in L^q(X)$, then:
  \begin{equation}
    A_m^n f(x) := A_m f(x) = \frac{1}{|\{p \in \PP_n : p \leq m\}|} \sum_{\substack{p \in \PP_n \\ p \leq m}} f(T^{P(p)} x)
  \end{equation}  
  converges for almost all $x \in X$.
\end{thm}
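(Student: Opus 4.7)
The plan is to adapt the Bourgain--Wierdl framework to primes represented by $x^2+ny^2$, the novelty being that these primes are characterized by a splitting condition in the ring class field $H_n$ of $\QQ(\sqrt{-n})$ rather than by a congruence condition modulo a single integer. By Calder\'on's transference principle it suffices to prove a maximal and an oscillation inequality for the convolution operators
\[
  A_m f(x) = \frac{1}{\pi_n(m)} \sum_{\substack{p \in \PP_n \\ p \leq m}} f(x - P(p))
\]
on $\ell^q(\ZZ)$. A trivial $\ell^\infty$ bound combined with Marcinkiewicz interpolation reduces matters to $q=2$; there, a.e.\ convergence follows from (i) a weak-type $(2,2)$ bound for $\sup_m |A_m f|$, together with (ii) an $\ell^2$ oscillation inequality along the dyadic subsequence $m_j = 2^j$.

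Both (i) and (ii) are Fourier-analytic and are attacked via the Hardy--Littlewood circle method applied to
\[
  \widehat{K_m}(\xi) = \frac{1}{\pi_n(m)} \sum_{\substack{p \in \PP_n \\ p \leq m}} e(P(p)\xi).
\]
I would split $[0,1]$ into major arcs around rationals $a/q$ with $q \leq (\log m)^C$ and the complementary minor arcs. On the major arcs one expects an asymptotic of the form $\widehat{K_m}(a/q+\beta) \approx S(a,q)\, v_m(\beta)$, where $S(a,q)$ is an exponential sum over the residues attained by $P$ on $\PP_n$ modulo $q$ and $v_m(\beta)$ is an archimedean integral. The density input is a Siegel--Walfisz theorem for $\PP_n$ in arithmetic progressions, which in turn follows from the prime ideal theorem applied in the compositum $H_n(\zeta_q)$, together with Chebotarev and standard zero-free region estimates for Hecke $L$-functions. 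Once this asymptotic is established, the resulting model multiplier can be handled by the Bourgain dyadic decomposition in $q$, Ionescu--Wainger-type multiplier theorems, and Weyl differencing to cope with the polynomial phase $P$.

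The main obstacle is the minor arc bound
\[
  \sup_{\alpha \in \mathfrak{m}_m} |\widehat{K_m}(\alpha)| \lesssim (\log m)^{-A}
\]
for arbitrary fixed $A$. The plan is to apply a Vaughan-type decomposition to a von Mangoldt-type function $\Lambda_n$ weighted to detect $\PP_n$ --- equivalently, the indicator of split primes of $\QQ(\sqrt{-n})$ whose prime ideal lies in the principal class of the ring class field, which can be isolated by averaging over Hecke Gr\"ossencharaktere. This reduces $\widehat{K_m}(\alpha)$ to Type I and Type II bilinear sums of the shape $\sum_{\mathfrak{a}\sim A}\sum_{\mathfrak{b} \sim B} \psi(\mathfrak{a}\mathfrak{b})\, e(P(N(\mathfrak{a}\mathfrak{b}))\alpha)$. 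Type I sums reduce to Weyl sums for $P$ after summing the unrestricted variable, while Type II sums are handled by Cauchy--Schwarz followed by a large sieve inequality for Hecke characters to extract cancellation. Inserting a minor arc Dirichlet approximation $|\alpha - a/q|\leq 1/q^2$ with $q$ in the intermediate range then yields the required logarithmic savings; this is the step I expect to be technically the most delicate.

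Combining the minor arc decay with the major arc multiplier analysis gives the $\ell^2$ oscillation and maximal inequalities along $m_j = 2^j$, which extend to all $m$ by a standard monotonicity argument for $\pi_n(m)$. Calder\'on transference then returns the pointwise convergence statement to the abstract measure-preserving system $(X,\mu,T)$, completing the proof.
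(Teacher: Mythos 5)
Your proposal tracks the paper's architecture at a high level: Calder\'on transference, circle method, Hecke $L$-function input for the major arcs (the paper's Proposition~\ref{hecke-character-prime-number-theorem} is exactly the twisted prime ideal theorem you describe, proved by averaging over characters of $I_{\QQ(\sqrt{-n})}(q_0)/\mathcal{P}^+_{\QQ(\sqrt{-n})}(q_0)$ with a Fogels Siegel-zero input), a Vaughan decomposition over $\mathcal{O}_{\QQ(\sqrt{-n})}$ for the minor arcs, and Bourgain's multi-frequency machinery for the $\ell^2$ oscillation bound. However, there are two genuine gaps.

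First, the interpolation. A weak-type $(2,2)$ maximal bound together with the trivial $\ell^\infty$ bound gives the result only for $q \geq 2$. The theorem claims $q > 1$, and for $1 < q < 2$ you cannot interpolate downward from $q=2$: there is no endpoint below $2$ available. The paper fills this with the High--Low decomposition (Proposition~\ref{high-low-method-guarantees-maximal-lp-inequality} and Lemma~\ref{l2-boundedness-of-telescoping-operator}, following Bourgain), which produces a low-part operator with polynomial growth in the truncation parameter $S$ and a high-part operator with exponential $\ell^2$-decay, yielding a weak-type $(q,q)$ bound for $1<q<2$; the Magyar--Stein--Wainger sampling principle and the partial maximal inequality (Lemma~\ref{lp-partial-maximal-inequality}) control the low part at arbitrary $p>1$. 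Without some replacement for this, your argument establishes only $q \geq 2$.

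Second, the Type II sums. After Vaughan over $\mathcal{O}_{\QQ(\sqrt{-n})}$ you are left with bilinear sums of the shape $\sum_{\mathfrak{a}}\sum_{\mathfrak{b}} x_{\mathfrak{a}} y_{\mathfrak{b}}\, e(\alpha P(N(\mathfrak{a}\mathfrak{b})))$. You propose Cauchy--Schwarz followed by a large sieve for Hecke characters, but the Hecke large sieve extracts cancellation from character oscillations, whereas after Cauchy--Schwarz the remaining oscillation lives entirely in the archimedean phase $e\bigl(\alpha(P(N(\mathfrak{a})N(\mathfrak{b}_1))-P(N(\mathfrak{a})N(\mathfrak{b}_2)))\bigr)$, which is not a character and is invisible to that sieve. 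The paper's key move, which your plan is missing, is to group ideals by rational norm, setting $z_a = \sum_{N\mathfrak{a}=a} x_{\mathfrak{a}}$ and $t_b = \sum_{N\mathfrak{b}=b} y_{\mathfrak{b}}$, exploiting the fact that a rational prime has at most two prime ideal factors in $\mathcal{O}_{\QQ(\sqrt{-n})}$, so $|z_a| \lesssim \tau(a)^2$ and $|t_b| \lesssim \tau(b)^2\log b$. This collapses the ideal bilinear sum to a rational-integer bilinear sum $\sum_{a,b} z_a t_b\, e(\alpha P(ab))$, to which a known discorrelation/inverse theorem (Proposition 2.2 of Matom\"aki et al.) applies directly: if the sum is large, $\alpha$ is forced onto a major arc, contradiction. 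Without the norm-collapsing step you cannot run Weyl differencing or any rational-integer bilinear estimate against the polynomial phase, and the Hecke-large-sieve route stalls.
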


We further address quantitative convergence statistics for these averages, namely $r$-variation and the jump counting function, introduced by L\'epingle in the context of martingales (\cite{lepingle1976variation}) and imported to the ergodic-theoretic context by Bourgain (see for instance \cite{bourgain2006approach}); we also address Bourgain's oscillation seminorm. We will introduce the precise definitions of $r$-variation, the jump counting function and the oscillation seminorm in Section \ref{section-7}. Much work has been devoted to addressing the interplay between these operators and classical questions in harmonic analysis and ergodic theory (see for instance \cite{campbell2000oscillation}, \cite{zorin2015variation} or \cite{krause2022discrete}). Our next result proves that ergodic averages along primes of the form $x^2 + ny^2$ contribute another example to the aforementioned collection:
\begin{thm} \label{variational-ergodic-theorem-special-prime-numbers}
  Let $n$ be a positive integer. Fix $r > 2, q > 1$, a polynomial $P(x) \in \ZZ[x]$, a finite increasing sequence $I = (I_j)_{j \in [1, M]}$ and a positive real number $\lambda$. For all measurable functions $f$ on a measure-preserving system $(X, \mu, T)$, the following inequalities are satisfied:
  \begin{equation} \label{variation-inequality}
    \norm{\mathcal{V}^r \Bigl( A_m f(x) : m \in \ZZ \Bigr)}_{L^q(X)} \lesssim_{P, q, n} \frac{r}{r-2} \norm{f}_{L^q(X)} 
  \end{equation}
  \begin{equation} \label{oscillation-seminorm-inequality}
    \norm{\text{Osc}_I \Bigl( A_m f(x) : m \in \ZZ \Bigr)}_{L^q(X)} \lesssim_{P, q, n} \norm{f}_{L^q(X)} 
  \end{equation}
  \begin{equation} \label{jump-counting-inequality}
    \norm{\lambda N_\lambda^{1/2} \Bigl( A_m f(x) : m \in \ZZ \Bigr)}_{L^q(X)} \lesssim_{P, q, n} \norm{f}_{L^q(X)} 
  \end{equation}
  It is worthwhile mentioning explicitly that in the second and third inequalities, the implied constants does not depend on the choice of sequence $I$ nor the number $\lambda$, respectively.
\end{thm}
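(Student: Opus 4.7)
The plan is to invoke the Calderón transference principle, reducing the three inequalities to corresponding estimates for the convolution operators $K_m \ast \cdot$ on $\ell^q(\ZZ)$, where $K_m(y) = |\PP_n \cap [1,m]|^{-1} \#\{p \in \PP_n : p \leq m, P(p) = y\}$. Taking Fourier transforms gives the multipliers
\[
  \widehat{K_m}(\xi) \;=\; \frac{1}{|\PP_n \cap [1,m]|} \sum_{\substack{p \in \PP_n \\ p \leq m}} e^{2\pi i \xi P(p)},
\]
and the strategy becomes the Hardy--Littlewood circle method in the spirit of Bourgain and Wierdl, with the main new input being exponential sum estimates in which $p$ is constrained to lie in $\PP_n$.

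Following this blueprint and the more recent variational refinements of Mirek--Stein--Trojan and Jones--Seeger--Wright, I would decompose $\widehat{K_m}$ into a major arc contribution $M_m$ and a minor arc remainder $E_m$. On the major arcs the multiplier should be well approximated by the product of a Gauss-type sum attached to prime ideals of $\ZZ[\sqrt{-n}]$ (with the correct Frobenius class condition), a Ramanujan-type singular series, and a continuous Weyl-type integral for $P$. This factorization reduces the major arc analysis to its well-understood continuous analogue together with arithmetic multipliers which are controllable through the Magyar--Stein--Wainger machinery and its variational extensions. The minor arc contribution $E_m$ is handled via an $\ell^2 \to \ell^2$ bound $\|E_m \ast \cdot\|_{\ell^2 \to \ell^2} \lesssim (\log m)^{-A}$ coming from Plancherel together with the minor arc exponential sum estimate; interpolation with the trivial $\ell^\infty$ bound then yields $\ell^q$ control for every $q > 1$.

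To obtain the quantitative seminorm estimates \eqref{variation-inequality}, \eqref{oscillation-seminorm-inequality} and \eqref{jump-counting-inequality}, I would split each object between the lacunary scales $m = 2^k$ and within dyadic blocks. Contributions between lacunary scales reduce to an $L^q$ bound on the square function of dyadic differences $\bigl(\sum_k |A_{2^{k+1}} f - A_{2^k} f|^2\bigr)^{1/2}$ combined with a Rademacher--Menshov type inequality; within each dyadic block, standard numerical estimates reduce the contribution to another controllable square function which is small because the multipliers $\widehat{K_m}$ change slowly inside each block. The uniformity of the oscillation bound in the sequence $I$ and of the jump-counting bound in $\lambda$ are built into the Jones--Seeger--Wright framework, which packages variation, oscillation and jump counting into a common set of $L^q$ inequalities for the associated square and maximal functions; the $r/(r-2)$ blow-up in \eqref{variation-inequality} is the standard loss arising from extrapolating the trivial $r = \infty$ bound through an $\ell^2$-type endpoint statement.

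The principal obstacle, and the main source of novelty, is the minor arc exponential sum estimate for $\PP_n$. The classical Vinogradov--Vaughan bound for $\sum_{p \leq N} e^{2\pi i \alpha P(p)}$ relies on bilinear decompositions of the von Mangoldt weight; to produce an analogous $(\log N)^{-A}$ saving for $\PP_n$, I would exploit the characterization of primes in $\PP_n$ as those splitting completely in a suitable ring class field of $\QQ(\sqrt{-n})$, detecting membership in $\PP_n$ through Hecke Grössencharacter values and class group characters. A Type I / Type II combinatorial decomposition of these weights, combined with Weyl differencing applied to the polynomial phase $P$, should yield the required saving uniformly on the minor arcs. Making this step quantitative, uniform in $P$, and compatible with the major arc asymptotic expansion is the main technical challenge, and is the place where genuinely new algebraic input beyond the Bourgain--Wierdl framework is required.
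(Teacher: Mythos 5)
Your blueprint matches the paper's at the strategic level — Calder\'on transference, circle-method decomposition with major and minor arcs, an $\ell^2$ minor-arc bound via Plancherel plus interpolation, lacunary/dyadic splitting, Rademacher–Menshov within blocks, and a Hecke-character/Vaughan decomposition for the minor-arc exponential sums. However, two essential ingredients that the paper relies on are absent, and without them the plan as written does not close.

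The first is Ionescu--Wainger theory. You invoke the Magyar--Stein--Wainger sampling principle for the arithmetic part of the major-arc multiplier, but that principle only transfers a single multiplier periodic modulo a fixed $q$; the major-arc singular series here is a sum over \emph{all} reduced fractions $a/q$ with $q$ growing polylogarithmically in the scale $m$, and the number of such fractions is far too large to sum trivially when $q$ (in $\ell^q$) is away from $2$. The paper's route out is to organize the rationals into the $\Sigma_{\le R}(N)$ hierarchy of Definition~\ref{ionescu-wainger-set} and use the Ionescu--Wainger multiplier bound \eqref{ionescu-wainger-inequality}, which gives $\ell^q$ control for $q\in(1,\infty)$ with only $O_{p,\rho}(1)$ loss, followed by interpolation against the strong $\ell^2$ savings from the Weyl-sum estimate. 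Replacing this with Magyar--Stein--Wainger alone, as you propose, gives only a $q$-periodic statement at fixed $q$ and costs a factor growing with the number of major-arc fractions, which destroys the estimate for $q$ near $1$.

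The second gap is the passage between the unweighted averages $A_m$ of the theorem statement and the von Mangoldt (or $\log$)-weighted averages that the circle method actually produces. For a maximal function this is a one-line partial summation argument (Wierdl's Lemma~1), but for $r$-variation, oscillation, and especially jump counting, the transference is substantially more delicate: one needs a statement like Proposition~\ref{weight-transference-for-operators-in-f}, which shows that applying any $U\in\mathcal F$ to a partial sum with modified weights is controlled by a convex combination of $U$'s (with possibly shifted parameter $\lambda$ or reindexed sequence $I_j$) applied to the original partial sums. The $\mathcal V^r$ case is in Mirek--Trojan, but the oscillation and jump-counting cases required a new argument in the paper. Your proposal treats this transference as implicit; as stated it would let you prove the theorem only for the $\log$-weighted averages, not for $A_m$ itself.

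Finally, a minor point: the paper does not appeal to the Jones--Seeger--Wright framework. The $r/(r-2)$ constant enters via L\'epingle's inequality for the continuous polynomial averaging operators (the estimates \eqref{lepingle-inequality-radon-averaging-operator}), applied after the Magyar--Stein--Wainger transfer in the large-scale regime (your "between lacunary scales" contribution). Your heuristic explanation for this constant as an interpolation loss is not how it arises here, though the numerology coincidentally agrees.
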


\begin{rmk}
  Lemma $2.12$ from \cite{mirek2020jumpinterpolation} together with interpolation methods justifies why \eqref{variation-inequality} follows from \eqref{jump-counting-inequality}. However, we will give the unified proof of Theorem \ref{variational-ergodic-theorem-special-prime-numbers} for all operators above.
\end{rmk}
The final result of our work addresses the $L^1$-endpoint. For the most popular nonconventional ergodic averages i.e.~along primes and polynomial images, convergence was established only for functions in $L^p(X)$ with $p > 1$, in contrast to Birkhoff's theorem. For a long time it was a serious question to investigate convergence only under the assumption that $f \in L^1$. In celebrated work, Buczolich and Mauldin (in \cite{buczolich2010divergent}) established the \emph{divergence} for averages of the form:
\begin{equation} \label{divergent-average}
  \frac{1}{N} \sum_{n = 1}^N f(T^{n^d} x)
\end{equation} 
for general $f \in L^1(X)$ and $d = 2$. They used Sawyer's principle (more precisely, equation $(1)$ from \cite{sawyer1966maximal}) regarding the link between weak-type maximal inequalities and almost everywhere convergence/divergence of ergodic averages for functions in $L^1(X)$. Later on, LaVictoire invented a more general construction for showing that certain ergodic averages diverge (see Theorem 3.1 in \cite{lavictoire2011universally}), using it to show that ergodic averages along primes and ergodic averages diverge \eqref{divergent-average} for $d \geq 3$. We will use that construction to justify the following:
\begin{thm} \label{divergence-theorem-special-prime-numbers}
  Let $n$ be a positive integer. For every measure preserving system $(X, \mu, T)$, there exists a function $f \in L^1(X)$ so that:
  \begin{equation}
    A_m f(x) = \frac{1}{|\{p \in \PP_n : p \leq m\}|} \sum_{\substack{p \in \PP_n \\ p \leq m}} f(T^p x)
  \end{equation}  
  diverges for almost every $x \in X$.
\end{thm}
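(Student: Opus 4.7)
The plan is to invoke the general divergence construction of LaVictoire (Theorem 3.1 of \cite{lavictoire2011universally}) with the sequence $S = \PP_n$. LaVictoire's theorem is designed to upgrade certain quantitative counting and distribution data for a subset $S \subseteq \NN$ into an $L^1$ counterexample; once this is in place for $S = \PP_n$, the abstract Rokhlin tower transfer argument used in \cite{buczolich2010divergent} and \cite{lavictoire2011universally} moves the counterexample from the integer shift model to an arbitrary $\sigma$-finite $(X, \mu, T)$, so the real content lies in verifying the hypotheses of LaVictoire's theorem for $\PP_n$.

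The two inputs that need to be checked are a lower bound on the density of $\PP_n$ in dyadic windows, and a compatible upper bound that controls how $\PP_n$ concentrates in short intervals and in residue classes of small modulus. The density lower bound is a classical consequence of the Chebotarev density theorem applied to the ring class field $H_n$ of the order $\ZZ[\sqrt{-n}]$: a prime $p$ coprime to the conductor lies in $\PP_n$ precisely when its Frobenius class in $\Gal(H_n/\QQ)$ is the identity, which yields
\begin{equation*}
  |\PP_n \cap [1, N]| \sim \frac{1}{[H_n : \QQ]} \cdot \frac{N}{\log N}.
\end{equation*}
Equidistribution of $\PP_n$ in residue classes coprime to $\text{disc}(H_n)$ follows from Chebotarev applied to the compositum $H_n \cdot \QQ(\zeta_q)$, and the short-interval upper bound is a direct application of the Selberg sieve. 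These are essentially the same inputs that Wierdl and LaVictoire used for the full set of primes, only restricted to a positive-density subfamily cut out by a Galois condition.

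The main obstacle is packaging these number-theoretic inputs into the exact form required by LaVictoire's machinery. In particular, one must verify a Fourier-analytic dispersion estimate on minor arcs for the normalized indicator of $\PP_n$ that matches the quantitative tolerance in Theorem 3.1 of \cite{lavictoire2011universally}; the minor arc bound for $\PP_n$ (to be developed in the earlier sections of the paper in service of the positive results, Theorems \ref{ergodic-theorem-special-prime-numbers} and \ref{variational-ergodic-theorem-special-prime-numbers}) should be exactly what is needed here, so this step should amount to reusing the same Vinogradov/Vaughan type estimate adapted to prime ideals in $\ZZ[\sqrt{-n}]$. Once LaVictoire's construction produces $f \in L^1(\ZZ)$ with $\sup_m |A_m f|$ infinite on a set of positive density, the standard Rokhlin tower argument transfers this $f$ to any aperiodic component of $(X, \mu, T)$, and the periodic part is handled trivially, completing the proof.
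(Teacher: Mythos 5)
Your high-level strategy matches the paper exactly: reduce via Sawyer's principle to the failure of a weak-type $(1,1)$ maximal bound, then invoke Theorem 3.1 of \cite{lavictoire2011universally} with $S = \PP_n$, and finish with the standard transference to an arbitrary $\sigma$-finite aperiodic system. The density lower bound you get from Chebotarev for the ring class field is also correct and is essentially how the paper verifies condition $(3.1)$ (via $\prod_{p : (\frac{-n}{p}) = 1} \frac{p-1}{p} = 0$).

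However, your account of which hypotheses actually need work, and how to verify them, has a genuine gap. LaVictoire's conditions $(3.3)$ and $(3.4)$ concern the separation statistics of the set of admissible residues modulo the auxiliary highly composite moduli $q_j$: they require that consecutive gaps between coprime residues are spread out in a quantitatively strong sense. The paper verifies these by appealing to the fact that the normalized gap distribution of $\{a \in [q_j] : a \perp q_j\}$ tends to a Poisson limit as $q_j^{-1}\varphi(q_j) \to 0$, which is a theorem of Hooley (\cite{hooley1965difference}); LaVictoire's Lemma 3.2 shows this Poisson behavior yields $(3.3)$ and $(3.4)$ automatically. Neither a Selberg-sieve short-interval upper bound nor a minor-arc Fourier dispersion estimate plays any role here, and in fact the minor-arc machinery from Sections \ref{section-3} and \ref{section-4} is not used at all in the paper's proof of Theorem \ref{divergence-theorem-special-prime-numbers}. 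You have substituted a plausible-sounding but incorrect analytic input for the correct combinatorial/probabilistic one, and without Hooley's Poisson statistics, conditions $(3.3)$--$(3.4)$ remain unverified.

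A smaller discrepancy: the last condition, that every admissible residue class modulo $Q$ is hit by $\PP_n$ with the expected frequency, is verified in the paper not by Chebotarev for $H_n \cdot \QQ(\zeta_q)$ but by a direct Fourier inversion using the exponential sums $S(a, q)$ (derived from the major-arc analysis), reducing to the elementary count that $(x - wy)(x + wy) \equiv b \pmod p$ has exactly $p - 1$ solutions independent of the unit $b$. Your Chebotarev-for-the-compositum route would also give the needed uniformity, so this piece of your proposal is a legitimate alternative; the substantive issue is the missing Hooley/Poisson ingredient for $(3.3)$--$(3.4)$.
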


In particular, the theme of this paper is to provide additional input to the program of study of nonconventional ergodic averages introduced by Bourgain:
\begin{enumerate}
  \item Bourgain's investigations on ergodic averages with polynomial values appeared in a sequence of papers beginning with \cite{bourgain2006approach} and culminating with the famous \cite{bourgain1989pointwise}.
  \item The case of primes was resolved by Wierdl using simpler methods (see \cite{wierdl1988pointwise}).
  \item Weighted ergodic averages were investigated by Cuny and Weber in \cite{cuny2017ergodic}.
  \item A new paradigm was introduced in last decade by Mirek, Stein and collaborators using Ionescu-Wainger theory (respectively brought up in Section \ref{section-7}). In the series of papers \cite{mirek2019lp}, \cite{mireklp} they established estimates for variation and square function of multidimensional polynomial ergodic averages. 
  \item Variational estimates for primes are covered in \cite{zorin2021maximal} and \cite{mirek2017variational}. 
  \item Estimates for the jump counting function and oscillation seminorms for multidimensional ergodic averages for polynomial values/primes are discussed by Mirek, Stein, Zorin-Kranich and by Mehlhop and Słomian in \cite{mirek2020jump}, \cite{mehlhop2024oscillation}.
  \item Notions of oscillation proved to be particularly useful in \cite{bourgain2023multi}, giving substantial progress on the linear version of the Furstenberg-Bergelson-Leibman conjecture (i.e.~Conjecture $1.22.$ from the aforementioned paper).
\end{enumerate}
The strategy for establishing quantitative and qualitative results in pointwise ergodic theory goes back to Bourgain, as he provided an adaptation of the \textit{analytic-number-theoretic Hardy-Littlewood circle method} to accommodate the needs of pointwise ergodic theory. By the Cald\'eron's transference principle, one may focus on the integer system $(\ZZ, x \to x-1)$. In that context, the average with fixed scale can be rewritten as convolution with appropriate kernel. For instance, the expression on the integer system:
\begin{equation}
  \frac{1}{m} \sum_{k = 1}^m f(T^k x) = \frac{1}{m} \sum_{k = 1}^m f(x - k)
\end{equation}
can be rewritten as:
\begin{equation}
  \Bigl( \frac{1}{m} \sum_{k = 1}^m \delta_k \Bigr) \ast f(x)
\end{equation}
In limit (i.e.~when $m \to \infty$), these convolution kernels are convergent on the Fourier side to the Kronecker delta function, $\delta_0: \TT \to \CC$, that evaluates to one on zero and zero everywhere else. In the case of polynomial averages:
\begin{equation}
  \frac{1}{m} \sum_{k = 1}^m f(T^{P(k)} x) = \frac{1}{m} \sum_{k = 1}^m f(x - P(k))
\end{equation}
the analogous limit in Fourier space is equal to:
\begin{equation}
  \sum_{\substack{\frac{a}{q} \in \QQ \pmod 1 \\ (a, q) = 1}} \Biggl( \frac{1}{q} \sum_{b = 1}^q e \Bigl(\frac{aP(b)}{q} \Bigr) \Biggr) \delta_{a/q}
\end{equation}
For averages over primes weighted by the von Mangoldt function:
\begin{equation}
  \frac{1}{m} \sum_{p \in \PP \cap [m]} \Lambda(p) f(T^p x) = \frac{1}{m} \sum_{p \in \PP \cap [m]} \log p f(x - p)
\end{equation}
these limits become:
\begin{equation}
  \sum_{\substack{\frac{a}{q} \in \QQ \pmod 1 \\ (a, q) = 1}} \frac{\mu(q)}{\varphi(q)} \delta_{a/q},
\end{equation}
where the quotient between M\"obius function and Euler totient function $\frac{\mu(q)}{\varphi(q)}$ arises after using Ramanujan formula for exponential sum of the form:
\begin{equation}
  \frac{1}{\varphi(q)} \sum_{b \in [q] : (b, q) = 1} e \Bigl(\frac{b}{q} \Bigr)
\end{equation}
For clarity, we recall that: 
\begin{equation}
  \varphi(a) := |\{ b \in [a] : (b, a) = 1\}|
\end{equation}
and
\begin{equation}
  \mu(a) :=  \begin{cases}
    (-1)^{\text{number of prime divisors of } a} & \text{when } a \text{ is squarefree} \\ 
    0 & \text{ otherwise }
  \end{cases}
\end{equation}
In the simplest context we consider, i.e.~for averages:
\begin{equation}
  \frac{1}{m} \sum_{p \in \PP_n \cap [m]} \log p \cdot f(T^p x) = \frac{1}{m} \sum_{p \in \PP_n \cap [m]} \log p \cdot f(x - p),
\end{equation}
the behaviour of the spectrum looks almost the same: it is given by a linear combination of Kronecker delta functions along rational primes with coefficients that are slightly more sophisticated exponential sums:
\begin{equation}
  \sum_{\substack{\frac{a}{q} \in \QQ \pmod 1 \\ (a, q) = 1}} \frac{S(a, q)}{2 |\Cl(\QQ(\sqrt{-n}))| \varphi_2(q_0)} \delta_{a/q},
\end{equation}
where $S(a, q), \varphi_2$ and $q_0$ are defined later (see respectively \eqref{weyl-sum}, \eqref{varphi-function} and \eqref{q_0-expression}) and $\Cl(\QQ(\sqrt{-n}))$ is the ideal class group of the field $\QQ(\sqrt{-n})$ (see Appendix \ref{appendix} for details).

As the limit spectrum is supported on the set of all rationals, it was necessary for Bourgain to provide additional tools to tackle his problem, beyond those which give a distinguished role to the zero frequency, namely Bourgain's multifrequency lemma and Bourgain's superorthogonality method. 
Naturally, Theorem \ref{variational-ergodic-theorem-special-prime-numbers} is strictly stronger than Theorem \ref{ergodic-theorem-special-prime-numbers}. However, we still intend to cover both of these approaches.
In our context, the main significant challenge that arises is the investigation of the Fourier transform of the characteristic function of primes $\PP_n \cap [N]$ when $N$ goes to infinity. Crucially important in analysing these sums are the formulas derived from the theory of Hecke L-functions and Vaughan's identity for general number fields. 
Some of the number-theoretic computations are far simpler in the case when $n \equiv 1, 2 \pmod 4$ and $n$ is squarefree. These are the cases when $n_0$ (introduced later) is equal to $1$. Nevertheless, we didn't want number theoretic complications to prevent us from fully resolving the above-raised questions for arbitrary positive $n$, so we accordingly cover all cases. \\

One can ask a similar question for arbitrary number fields: i.e.~with a fixed number field $K$, determine whether a similar result holds for the set of primes $\PP_K \subset \PP$ that split completely in $K$:
\begin{equation}
  \frac{1}{|\PP_K \cap [m]|} \sum_{p \in \PP_K \cap [m]} f(T^p x).
\end{equation}
The answer is positive for Galois extensions over $\QQ$ (although the Galois condition is only used in equations \eqref{prime-ideals-vs-prime-passing-before} and \eqref{prime-ideals-vs-prime-passing}) and the proof is extremely analogous to the proof contained in this paper.

\subsection{Structure of paper} The structure of the paper is as follows. In sections \ref{section-2}, \ref{section-3} and \ref{section-4} we provide all necessary number-theoretic background; in sections \ref{section-5} and \ref{section-6} we establish Theorem \ref{ergodic-theorem-special-prime-numbers}. 
Eventually, using machinery introduced in \cite{ionescu2006L}, we will resolve Theorem \ref{variational-ergodic-theorem-special-prime-numbers}. The last and very short section is devoted to the proof of Theorem \ref{divergence-theorem-special-prime-numbers}. For the convenience of those less familiar with algebraic number theory, we have attached a dictionary of the most important terms from that field in the Appendix \ref{appendix}.
\subsection{Acknowledgements} The author is grateful to Ben Krause for suggesting this problem, many fruitful discussions and assistance with typing up this article. Words of recognition also go to Hamed Mousavi and Joni Ter\"av\"ainen for discussions of Vaughan's identity for general number fields and Type II sums. Thanks also go to Tanja Eisner and Oleksiy Klurman for encouragement. 

\section{Number theory tools} \label{section-2}
\subsection{Number theory notation} 
Across the paper, we will use the following notation:
\begin{enumerate}
  \item $I_{\QQ(\sqrt{-n})}$ represents the group of all fractional ideals in $\QQ(\sqrt{-n})$. 
  \item Fractional ideals that are coprime to $q$ form the group $I_{\QQ(\sqrt{-n})}(q)$.
  \item Similarly, we introduce $\mathcal{P}_{\QQ(\sqrt{-n})}$ and $\mathcal{P}_{\QQ(\sqrt{-n})}(q)$ for the groups of principal ideals and principal ideals that are coprime to $q$, respectively. 
  \item We define $n_0$ as a quantity depending on the choice of $n$, namely if we factorize $n$ into squarefree and squareful parts $n = ab^2$, we define: 
  \begin{equation}
    n_0 = \begin{cases}
      2b &\text{ when } a \equiv 3 \pmod 4 \\
      b &\text{ otherwise}.
    \end{cases}
  \end{equation}
  Furthermore, let $\{1, \omega_n \}$ be an integral basis of $\mathcal{O}_{\QQ(\sqrt{-n})}$, the ring of integral elements of field $\QQ(\sqrt{-n})$. It's a standard exercise in algebraic number theory to show that:
  \begin{equation}
    \omega_n = \begin{cases}
      \frac{1 + \sqrt{-a}}{2} &\text{ when } a \equiv 3 \pmod 4 \\
      \sqrt{-a} &\text{ otherwise}.
    \end{cases}
  \end{equation}
  \item We provide the notation for the group of principal ideals that are generated by elements congruent to $1$ modulo $q$, $\mathcal{P}_{\QQ(\sqrt{-n})}^+(q)$; the notation for the group of principal ideals that are generated by elements written in the integral basis of $\mathcal{O}_{\QQ(\sqrt{-n})}$ with $\omega_n$-part divisible by $m$ is $\mathcal{P}_{\QQ(\sqrt{-n})}^m(q)$, where $m$ divides $q$.
  \item We will use notation $a \perp b$ to indicate that positive integers $a$ and $b$ are coprime.
  \item For $q > 1$, $\varphi_2(q)$ is defined as follows:
  \begin{equation} \label{varphi-function}
    \varphi_2(q) = \begin{cases}
      \frac{1}{2} \big|\{(u, v) \in [q]^2 : N(u + v \omega_n) \perp q \} \big| &\text{ if } q \neq 2 \\
      \big|\{(u, v) \in [q]^2 : N(u + v \omega_n) \perp q \} \big| &\text{ otherwise. } 
    \end{cases}
  \end{equation}
  This is precisely the size of quotient $\mathcal{P}_{\QQ(\sqrt{-n})}(q) / \mathcal{P}_{\QQ(\sqrt{-n})}^+(q)$ (the definition of the norm $N(\cdot)$ is recalled in Appendix \ref{appendix}). We also denote the variant without normalization by $\varphi_2'(q)$ i.e. 
  \begin{equation}
    \varphi_2'(q) = \big|\{(u, v) \in [q]^2 : N(u + v \omega_n) \perp q \} \big|.
  \end{equation}
  \item We will always denote $q_0$ by:
  \begin{equation} \label{q_0-expression}
    q_0 = qn_0
  \end{equation}
  \item For coprime $a, q$ with $a \leq q$, let $S(a, q)$ denote:
  \begin{equation} \label{weyl-sum}
    \begin{aligned}
      S(a, q) &:= \sum_{\mathfrak{a} \in \mathcal{P}_{\QQ(\sqrt{-n})}^{n_0}(q_0) / \mathcal{P}_{\QQ(\sqrt{-n})}^+(q_0)} e\Bigl(\frac{a P(N \mathfrak{a})}{q}\Bigr) \\
      &= w(q_0) \sum_{\substack{u \in [q_0], v \in [q_0] : N(u + v \omega_n) \perp q_0 \\ n_0 | v}} e\Bigl(\frac{a P(N(u + v \omega_n))}{q}\Bigr)
    \end{aligned}
  \end{equation}
  with $w(q_0)$ defined as:
  \begin{equation}
    w(q_0) := \begin{cases}
      \frac{1}{2} &\text{ if } q_0 > 2 \\
      1 &\text{ otherwise. } \\
    \end{cases}
  \end{equation}
  \item For ideals $\mathfrak{a} \in I_{\QQ(\sqrt{-n})}$ we can define the classical arithmetic functions as follows:
  \begin{enumerate}
    \item The divisor counting function $\tau$ evaluated at $\mathfrak{a}$ is follows:
    \begin{equation} \label{divisor-counting-function}
      \tau(\mathfrak{a}) = \sum_{\mfb \subset \mathcal{O}_{\QQ(\sqrt{-n})}: \mfb | \mfa} 1.
    \end{equation}
    \item The M\"obius function $\mu$ is defined as in the rational setting:
    \begin{equation} \label{mobius-function}
      \mu(\mfa) = \begin{cases}
        0 &\text{ if } \mfa \text{ is divisible by a square of a prime ideal } \\
        (-1)^{\text{number of prime ideals dividing } \mfa} &\text{ otherwise }. \\
      \end{cases}
    \end{equation} 
    \item The \textit{Von Mangoldt} function $\Lambda(\mfa)$ is $\log N \mfp$ when $\mfa$ is power of a prime ideal $\mfp$ and $0$ otherwise. 
    \item In this section as well as in the rest of the work we will write:
    \begin{equation}
      \begin{aligned}
        A &\lesssim B \\
        A &\lesssim_{p_1, \ldots, p_r} B \\
      \end{aligned}
    \end{equation}
    whenever there exists respectively a constant $C$ / a function $C(p_1, \ldots, p_r)$ such that:
    \begin{equation}
      \begin{aligned}
        A &\leq CB \\
        A &\leq C(p_1, \ldots, p_r) B \\
      \end{aligned}
    \end{equation}
  \end{enumerate}
  Equivalently, we will write that $A = O(B)$ and $A = O_{p_1, \ldots, p_r}(B)$ respectively. We will also use notation $A \ll B$ to indicate that $B$ is bigger than certain implied quantity depending on $A$.
\end{enumerate}

\subsection{Essential arithmetic tools}
The functions \eqref{varphi-function}, \eqref{weyl-sum} that were introduced above respect a few important properties:
\begin{lem} \label{arithmetic-functions-estimates}
  For every $\epsilon > 0, q$, $a$ that is coprime to $q$, and positive integer $k$, one has that:
  \begin{equation}
    \varphi_2(q) \gtrsim_{\epsilon} q^{2 - \epsilon},
  \end{equation}
  \begin{equation}
    S(a, q) \lesssim_{\epsilon, d, n} q^{2 - c_d + \epsilon},
  \end{equation}
  \begin{equation}
    \begin{aligned}
      & \frac{1}{\varphi_2'(q_0)} \sum_{\substack{u, v \in [q_0] : N(u + v \omega_n) \perp q_0 \\ n_0 | v}} e \Bigl(\frac{a P(N(u + v \omega_n))}{q} \Bigr) \\
      & \qquad \qquad \qquad \qquad \qquad \qquad = \frac{1}{\varphi_2'(kq_0)} \sum_{\substack{u, v \in [kq_0] : N(u + v \omega_n) \perp kq_0 \\ n_0 | v}} e \Bigl(\frac{a P(N(u + v \omega_n))}{q} \Bigr),
    \end{aligned}
  \end{equation}
  where $c_d$ is a certain constant depending on the degree of the polynomial $P$.
\end{lem}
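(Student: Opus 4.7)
The three estimates will be handled in sequence, unified by the observation that the unnormalized counting function $\varphi_2'$ is multiplicative under the Chinese Remainder Theorem: for coprime $q_1, q_2$, the CRT bijection between $[q_1q_2]^2$ and $[q_1]^2 \times [q_2]^2$ decouples the condition $N(u+v\omega_n) \perp q_1q_2$ into the analogous conditions modulo $q_1$ and $q_2$. For the lower bound on $\varphi_2(q)$, the plan is to factor $q$ into prime powers and apply this multiplicativity to write $\varphi_2'(q) = \prod_{p^k \| q} \varphi_2'(p^k)$. For each prime $p$, the equation $N(u + v\omega_n) \equiv 0 \pmod p$ defines a binary quadratic form in $\mathbb{F}_p^2$ and hence has at most $O(p)$ solutions; a Hensel lifting argument then gives $\varphi_2'(p^k) \geq p^{2k}(1 - C/p)$. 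Assembling these estimates and applying the divisor bound $\prod_{p \mid q}(1 - C/p)^{-1} \lesssim_\epsilon q^\epsilon$ yields $\varphi_2(q) \gtrsim_\epsilon q^{2-\epsilon}$.

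For the bound on $S(a,q)$, the approach again reduces to the prime power case via CRT multiplicativity. The coprimality constraint $N(u + v\omega_n) \perp q_0$ can be removed by M\"obius inversion over divisors of $q_0$, incurring only a $q^\epsilon$ loss through the divisor bound. The remaining task is to estimate an unrestricted exponential sum of the form $\sum_{(u,v) \in [q]^2} e(aP(N(u + v\omega_n))/q)$, whose argument is a polynomial in $(u, v)$ of total degree $2d$ with leading homogeneous part inherited from $N(u+v\omega_n)^d$. Freezing one variable generically produces a non-degenerate polynomial of degree $2d$ in the remaining variable, to which Weyl's inequality may be applied to obtain savings of size at least $q^{-c_d}$; the restriction $n_0 \mid v$ amounts to summing over a sub-lattice and affects only implicit constants depending on $n$. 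This is where the main obstacle is expected to lie: one must verify, at each prime power modulus, that the leading form of $P(N(u+v\omega_n))$ is sufficiently non-degenerate to apply Weyl's inequality after freezing one of the variables, and one must interweave the resulting savings with the M\"obius-inverted coprimality condition without degrading the power saving $q^{-c_d}$.

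Finally, for the scaling identity, it suffices by multiplicativity to verify the claim when $k = p$ is prime; the general case follows by iteration. When $p \nmid q_0$, the CRT decomposes $(u, v) \in [pq_0]^2$ into pairs $((u_0, v_0), (u_p, v_p))$; the condition $N \perp pq_0$ splits as $N_0 \perp q_0$ (depending only on $(u_0, v_0)$) and $N_p \perp p$ (depending only on $(u_p, v_p)$); the exponential is $q_0$-periodic (since $q \mid q_0$) and hence depends only on $(u_0, v_0)$; and $n_0 \mid v$ coincides with $n_0 \mid v_0$ (since $n_0 \mid q_0$). The sum at level $pq_0$ thus factors as the sum at level $q_0$ times $\varphi_2'(p)$, which exactly cancels the ratio $\varphi_2'(pq_0)/\varphi_2'(q_0) = \varphi_2'(p)$. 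When $p \mid q_0$, the prime divisors of $q_0$ and $pq_0$ coincide, so $N \perp pq_0 \iff N \perp q_0 \iff N_0 \perp q_0$; each valid $(u_0, v_0) \in [q_0]^2$ then lifts to exactly $p^2$ valid pairs in $[pq_0]^2$, giving $\varphi_2'(pq_0) = p^2 \varphi_2'(q_0)$ and the identity again follows.
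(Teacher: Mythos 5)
Your treatments of the first and third estimates are sound. For the first, your reduction to prime powers and the observation that the locus $N(u+v\omega_n)\equiv 0\pmod p$ has $O(p)$ points is precisely what the paper records more explicitly through the identities $\varphi_2'(p^\alpha)=\varphi_2'(p)p^{2\alpha-2}$ and $\varphi_2'(p)\in\{p^2-1,\,p(p-1),\,(p-1)^2\}$, leading to $\varphi_2'(q)\ge\varphi(q)^2$. For the third, your reduction to $k=p$ prime followed by the two cases $p\nmid q_0$ and $p\mid q_0$ is a clean, modular alternative to the paper's simultaneous decomposition $k=k_1k_2$; both use the same two observations (the exponential factors through residues mod $q_0$, and $n_0\mid v$ descends to the $q_0$-component), and the iteration is valid because $q\mid q_0\mid pq_0\mid\cdots$ preserves $q$-periodicity of the phase at every step.

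The gap is in the second estimate. You propose to strip the coprimality condition by M\"obius inversion over divisors of $q_0$, which converts the restricted sum into $\sum_{d\mid q_0}\mu(d)\sum_{u,v:\,d\mid N(u+v\omega_n)}e(aP(N(u+v\omega_n))/q)$. But the constraint $d\mid N(u,v)$ for a binary quadratic form $N$ is \emph{not} a sublattice condition: modulo a prime power $p^\alpha\mid d$ it defines a union of $\Theta(p^\alpha)$ (and, when $N$ splits, up to $\Theta(\alpha p^\alpha)$) residue classes in $(\ZZ/p^\alpha\ZZ)^2$, and after you freeze one variable inside such a class the residual Weyl sum is taken over a short or sparse progression whose length must be balanced against both the M\"obius weight and the divisor count. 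You flag this yourself (``one must interweave the resulting savings $\ldots$ without degrading the power saving $q^{-c_d}$''), which is exactly the point at which the argument is unfinished: the interaction between the variable length $q/d$, the number of admissible residue classes for each $d$, and the gcd of the leading coefficient with the modulus is precisely where one must work, and your sketch does not settle it.

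The paper sidesteps this by using additive rather than multiplicative detection of the coprimality: writing $\mathbbm{1}_{F(u,v)\perp p}=\tfrac{p-1}{p}-\tfrac{1}{p}\sum_{w=1}^{p-1}e(wF(u,v)/p)$ and folding the degree-$2$ correction $wp^{\alpha-1}F(u,v)$ into the degree-$2d$ phase $aQ(u,v)$. This reduces matters to \emph{unrestricted} exponential sums over full boxes $[p^\alpha]^2$, where freezing $v$ and invoking Weyl with explicit bookkeeping of $(r_{2d},p^\alpha)$ (the gcd of the leading $u$-coefficient with the modulus) is straightforward; the only residual case is $d=1$ with $p^{\alpha-1}\mid r_{2d}$, which the paper handles separately. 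If you want to keep the M\"obius route you would need to supply the analogue of this bookkeeping over the cosets cut out by $d\mid N(u,v)$; otherwise, switching to the additive-character sieve as in the paper is the cleaner path to $S(a,q)\lesssim_{\epsilon,d,n}q^{2-c_d+\epsilon}$.
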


\begin{proof}
  From the Chinese remainder theorem, all functions on the left-hand size are almost multiplicative / preserve multiplicative behaviour. The easiest to justify will be why the second inequality is true.
  It is enough to establish the same for $\varphi_2'(q)$ which is genuinely multiplicative (that is the variant of $\varphi_2$ without dividing by two in case when $q > 2$). We obtain:
  \begin{equation}
    \varphi_2'(q) \geq \varphi(q)^2
  \end{equation}
  from a couple of sentences:
  \begin{equation}
    \forall_{p \in \PP, \alpha > 1} \qquad \varphi_2'(p^\alpha) = \varphi_2'(p) p^{2 \alpha - 2} 
  \end{equation}
  \begin{equation} \label{varphi-caseology}
    \forall_{p \in \PP} \qquad \varphi_2'(p) \in \{ p^2 - 1, p(p-1), (p-1)^2 \}.
  \end{equation}
  Let's comment why the last inclusion is satisfied. Suppose that $p = 2$, then $\varphi_2'(p)$ expresses the number of solutions for one of four congruences:
  \begin{equation}
    u^2 + \epsilon_0 uv + \epsilon_1 v^2 \equiv 1 \pmod 2,
  \end{equation}
  where $(\epsilon_0, \epsilon_1) \in \{ 0, 1\}^2$. The number of solutions is therefore either $1, 2$ or $3$. In the case when $p > 2$, one is interested again about congruence:
  \begin{equation}
    u^2 + (\omega_n + \overline{\omega}_n) uv + \omega_n \overline{\omega}_n v^2 \not \equiv 0 \pmod p,
  \end{equation}
  or after a simple algebraic transformation:
  \begin{equation}
    u^2 + tv^2 \not \not \equiv 0 \pmod p
  \end{equation}
  for some $t \in \mathbb{F}_p$. If $t$ is equal to $0$, then naturally this has precisely $p(p-1)$ solutions. In the case when $-t$ is a non-quadratic residue modulo $p$, then $u^2 + tv^2$ is nonzero for precisely $p^2 - 1$ choices of $(u, v) \in [p]^2$. Finally, when $-t$ is a quadratic residue modulo $p$, then $u^2 + tv^2$ factorizes into a product of two linearly independent linear forms over $p$:
  \begin{equation}
    u^2 + tv^2 \equiv (u + rv)(u - rv) \pmod p
  \end{equation}
  for some $r \neq 0$; this is nonzero for $(p-1)^2$ pairs $(u, v) \in [p]^2$, finishing the proof of \eqref{varphi-caseology}.
  It is a bit harder to prove a similar fact for:
  \begin{equation}
    S(a, q)' := \sum_{\substack{u, v \in [qn_0] : N(u + v \omega_n) \perp qn_0 \\ n_0 | v}} e\Bigl(\frac{a P(N(u + v \omega_n))}{q}\Bigr).
  \end{equation}
  In the previous sum, we pass to arithmetic progressions with respect to $n_0$, so our goal reduces to showing that:
  \begin{equation}
    \sum_{u, v \in [q] : F(u, v) \perp q} e\Bigl(\frac{a Q(u, v)}{q}\Bigr) \lesssim_{d, \epsilon} q^{2 - c_d + \epsilon},
  \end{equation}
  where $F$ is a degree $2$ polynomial, monic with respect to the $u$ variable and $Q$ is a degree $2d$ polynomial, so that the $u^{2d}$-coefficient is of order $O_{n, d}(1)$. If we denote this coefficient by $r_{2d}$, due to the Chinese remainder theorem, we need only show that:
  \begin{equation}
    T(a, p^\alpha) := \sum_{u, v \in [p^\alpha] : F(u, v) \perp p} e \Bigl( \frac{a Q(u, v)}{p^\alpha} \Bigr) \lesssim (r_{2d}, p^\alpha)^{c_d} p^{(2 - c_d) \alpha}.
  \end{equation}
  After applying Fourier inversion:
  \begin{equation}
    \begin{aligned}
      \sum_{u, v \in [p^\alpha] : F(u, v) \perp p} e \Bigl( \frac{a Q(u, v)}{p^\alpha} \Bigr) &= \sum_{u, v \in [p^\alpha]} e \Bigl( \frac{a Q(u, v)}{p^\alpha} \Bigr) \mathbbm{1}_{F(u, v) \perp p} \\
      &= \sum_{u, v \in [p^\alpha]} e \Bigl( \frac{a Q(u, v)}{p^\alpha} \Bigr) \Biggl( \frac{p-1}{p} - \frac{1}{p} \sum_{w = 1}^{p-1} e \Bigl( \frac{w F(u, v)}{p} \Bigr) \Biggr), \\
    \end{aligned}
  \end{equation}
  and rearranging sums on the right-hand side, we obtain:
  \begin{equation}
    T(a, p^\alpha) = \frac{p - 1}{p} \sum_{u, v \in [p^\alpha]} e \Bigl( \frac{a Q(u, v)}{p^\alpha} \Bigr) - \frac{1}{p} \sum_{w = 1}^{p-1} \sum_{u, v \in [p^{\alpha}]} e \Bigl( \frac{a Q(u, v) + w p^{\alpha - 1} F(u, v)}{p^{\alpha}} \Bigr).
  \end{equation}
  We consider three cases: if $d$ is bigger than $1$, then the $u^{2d}$-coefficient of $a Q(u, v) + w p^{\alpha - 1} F(u, v)$ is $a r_{2d}$. Using Weyl's estimate, we can estimate all $u,v$-indexed sums above by:
  \begin{equation}
    p^\alpha (p^\alpha, r_{2d}) \Bigl( \frac{p^\alpha}{(p^\alpha, r_{2d})} \Bigr)^{1 - c_d} = (p^\alpha, r_{2d})^{c_d} p^{(2 - c_d) \alpha},
  \end{equation}
  as the first factor contributes to every single $v$, and $(p^\alpha, r_{2d})$ arises when we split the $u$-range into intervals of length $\frac{p^\alpha}{(p^\alpha, r_{2d})}$; this division is necessary for applying Weyl's estimate.
  The case when $d = 1$ and $p^{\alpha - 1}$ does not divide $r_{2d}$ can be considered in exactly the same fashion, as it is enough to verify for $d = 1$ and $p^{\alpha - 1} \nmid r_{2d}$ that:
  \begin{equation} \label{arithmetic-functions-estimates-2}
    \sum_{u, v \in [p^{\alpha}]} e \Bigl( \frac{a Q(u, v) + w p^{\alpha - 1} F(u, v)}{p^{\alpha}} \Bigr) \lesssim p^{2 \alpha - c_d \alpha} (r_{2d}, p^\alpha)^{c_d}
  \end{equation}
  for all $w \in [p-1]$ but one. We know that for all $w$ except one that we have:
  \begin{equation}
    (a r_{2d} + w p^{\alpha - 1}, p^\alpha) = p^{\alpha - 1}.
  \end{equation}
  Weyl's estimate in this case says that the left-hand side of \eqref{arithmetic-functions-estimates-2} is bounded by:
  \begin{equation}
    p^{2 \alpha - 1} p^{1 - c_d} = p^{2 \alpha - c_d} = (p^\alpha, p^{\alpha - 1})^{c_d} p^{2 \alpha - c_d \alpha} \leq p^{2 \alpha - c_d \alpha} (r_{2d}, p^\alpha)^{c_d}.
  \end{equation}
  It remains to justify \eqref{arithmetic-functions-estimates-2} for $w = 0$, however this is again the same reasoning as earlier.
  In order to prove the third identity, suppose that $k = k_1 k_2$ where every prime divisor of $k_1$ also divides $q_0$ and $k_2 \perp q_0$. Then:
  \begin{equation}
    \varphi_2'(kq_0) = k_1^2 \varphi_2'(k_2) \varphi_2'(q_0).
  \end{equation}
  Also, in the numerator of the identity, we can extract the $k_1^2$ factor:
  \begin{equation}
    \begin{aligned}
      \sum_{\substack{u, v \in [kq] : N(u + v \omega_n) \perp k_2 q_0 \\ n_0 | v}} & e \Bigl(\frac{a P(N(u + v \omega_n))}{q} \Bigr) \\
      &= \sum_{\substack{u, v \in [k_2 q_0] \\ N(u + v \omega_n) \perp k_2 q_0 \\ n_0 | v}} \sum_{0 < i, j < k_1} e \Bigl( \frac{a (P(N(u + ik_2q_0 + (v + jk_2q_0) \omega_n)))}{q} \Bigr) \\
      &= \sum_{\substack{u, v \in [k_2 q_0] \\ N(u + v \omega_n) \perp k_2 q_0 \\ n_0 | v}} \sum_{0 < i, j < k_1} e \Bigl( \frac{a (P(N(u + v \omega_n)))}{q} \Bigr) \\
      &= k_1^2 \sum_{\substack{u, v \in [k_2 q_0] \\ N(u + v \omega_n) \perp k_2 q_0 \\ n_0 | v}} e \Bigl( \frac{a (P(N(u + v \omega_n)))}{q} \Bigr). \\
    \end{aligned}
  \end{equation}
  Now, fix $u_0$ and $v_0$ in $[q_0]$ so that $N(u_0 + v_0 \omega_n)$ is coprime to $q_0$. In the right-hand side sum of the above equation, there are exactly $\varphi_2'(k_2)$ elements $u_1, v_1 \pmod{k_2}$ so that: $\newline N(u_0 + u_1 q_0 + (v_0 + v_1 q_0) \omega_n)$ is coprime to $k_2$. 
  Therefore we end up with:
  \begin{equation}
    \begin{aligned}
      k_1^2 \sum_{\substack{u, v \in [k_2 q_0] \\ N(u + v \omega_n) \perp k_2 q_0 \\ n_0 | v}} &e \Bigl( \frac{a P(N(u + v \omega_n))}{q} \Bigr)  \\
      &= k_1^2 \varphi_2'(k_2) \sum_{\substack{u, v \in [q_0] \\ N(u + v \omega_n) \perp q_0 \\ n_0 | v}} e \Bigl(\frac{a P(N(u + v \omega_n))}{q} \Bigr),
    \end{aligned}
  \end{equation}
  as desired.
\end{proof}

\begin{lem} \label{divisor-function-number-fields}
  One has the following inequalities as we sum the number theoretic functions along all small-norm ideals:
  \begin{equation}
    \sum_{\mfa : N \mfa \leq x} 1 \lesssim_n x 
  \end{equation}
  \begin{equation}
    \sum_{\mfa : N \mfa \leq x} \tau(\mfa)^k \lesssim_{n, k} x (\log x)^{2^k - 1}.
  \end{equation}
\end{lem}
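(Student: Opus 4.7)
The plan is to handle the two inequalities in sequence: the first via a Dirichlet hyperbola calculation based on the splitting behaviour of rational primes in $K := \QQ(\sqrt{-n})$, and the second by a pointwise comparison of $\tau(\mfa)^k$ with the $2^k$-fold divisor function, followed by an iterated application of the first inequality.

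For the first inequality, I would use the splitting of rational primes in $\oo_{\QQ(\sqrt{-n})}$ to derive the convolution identity
\[
|\{\mfa : N \mfa = m\}| = \sum_{d \mid m} \chi_D(d),
\]
where $\chi_D$ is the Kronecker character attached to the discriminant $D$ of $K$; the identity is checked prime-by-prime in all three cases (split, inert, ramified), using that the number of ideals of norm $p^e$ over a rational prime $p$ depends only on whether $p$ splits, is inert, or is ramified. Swapping the order of summation gives
\[
\sum_{N\mfa \leq x} 1 = \sum_{d \leq x} \chi_D(d) \lfloor x/d \rfloor,
\]
and a Dirichlet hyperbola split at $\sqrt{x}$ together with the boundedness $\left| \sum_{d \leq y} \chi_D(d) \right| = O_n(1)$ of incomplete character sums yields the $\lesssim_n x$ bound.

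For the second inequality, I would first establish the pointwise bound $\tau(\mfa)^k \leq \tau_{2^k}(\mfa)$, where $\tau_r(\mfa) := |\{(\mfa_1, \ldots, \mfa_r) : \mfa_1 \cdots \mfa_r = \mfa\}|$ is the $r$-fold ideal divisor function. Both sides are multiplicative, so the claim reduces to the elementary numerical estimate $(e+1)^k \leq \binom{e + 2^k - 1}{2^k - 1}$ for integers $e, k \geq 0$, which holds with equality at $e \in \{0, 1\}$ and is easy to verify for larger $e$ since the right-hand side grows like $e^{2^k - 1}/(2^k-1)!$ while the left grows only like $e^k$, and $2^k - 1 \geq k$. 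Having reduced to $\tau_{2^k}$, I would rewrite, using multiplicativity of the norm,
\[
\sum_{N\mfa \leq x} \tau_{2^k}(\mfa) = \big|\{(\mfa_1, \ldots, \mfa_{2^k}) : N\mfa_1 \cdots N\mfa_{2^k} \leq x\}\big|
\]
and then induct on the number of factors $r$. The first inequality gives the base case $r = 1$, and the inductive step uses the Abel-summation consequence $\sum_{N\mfa \leq y} 1/N\mfa \lesssim_n \log y$ together with $\sum_{N\mfa_r \leq x} (x/N\mfa_r)(\log(x/N\mfa_r))^{r-2} \lesssim_{n,r} x (\log x)^{r-1}$. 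Specialising to $r = 2^k$ delivers the required bound.

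The main technical wrinkle is the pointwise inequality $(e+1)^k \leq \binom{e + 2^k - 1}{2^k - 1}$: it is obvious asymptotically in $e$, but equality at small $e$ prevents a one-line calculus argument, so a brief direct verification (or induction on $e$) is needed to close the argument cleanly.
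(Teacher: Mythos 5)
Your proposal is correct, but it differs from the paper's proof on both counts, so a comparison is worth recording. For the first bound, the paper simply invokes Theorem~39 of Marcus's \emph{Number Fields}, whereas you rederive it from the Hecke identity $|\{\mfa : N\mfa = m\}| = \sum_{d\mid m}\chi_D(d)$ (equivalently $\zeta_K = \zeta\cdot L(\chi_D)$) and the hyperbola method; this is more work but self-contained and correct, noting that $\chi_D$ is always non-principal here because $K$ is imaginary quadratic, so the incomplete character sums are $O_n(1)$. For the second bound, the paper inducts directly on $k$: it writes $\tau(\mfn)^k = \sum_{\mfb\mid\mfn}\tau(\mfn)^{k-1}$, applies $\tau(\mfa\mfb)\le\tau(\mfa)\tau(\mfb)$ to decouple the two factors, and closes the induction with a dyadic decomposition of $\sum_{N\mfb\le x}\tau(\mfb)^{k-1}/N\mfb$, the exponent $2^k-1$ arising from $2(2^{k-1}-1)+1$. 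You instead pass through the pointwise comparison $\tau(\mfa)^k\le\tau_{2^k}(\mfa)$ and count $2^k$-tuples by inducting on the number of factors, each additional factor costing one power of $\log x$; this route explains the exponent $2^k-1$ more transparently but requires the local inequality $(e+1)^k\le\binom{e+2^k-1}{2^k-1}$, which you correctly flag as the technical pinch point. That inequality does hold: setting $m=2^{k-1}$, the inductive step reduces to $(e+1)\binom{e+m-1}{m-1}\le\binom{e+2m-1}{2m-1}$, which one can check by comparing the ratios of consecutive terms in $e$ (the relevant inequality is $(e+2m)(e+1)\ge(e+2)(e+m)$, i.e.\ $(m-1)e\ge0$), with equality at $e\in\{0,1\}$. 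Both proofs are about the same length once the combinatorial lemma is written out; yours is slightly more conceptual, the paper's slightly more elementary.
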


\begin{proof}
  Naturally, all inequalities are still true for general number fields. These inequalities have well-known analogues for rational integers (see equation $1.80$ from \cite{iwaniec2021analytic}), however for completeness of exposition we will discuss the proof here. The first one was already established in \cite{marcus1977number} (Theorem $39$ there). Therefore, we may focus on showing that for every $k \in \NN$:
  \begin{equation}
    \sum_{\mfa : N \mfa \leq x} \tau(\mfa)^k \lesssim_k x (\log x)^{2^k - 1}.
  \end{equation}
  The goal is to prove this via induction. For $k = 0$ this has been already done. For the inductive step, using the standard inequality $\tau(\mfa \mfb) \leq \tau(\mfa) \tau(\mfb)$, we estimate:
  \begin{equation}
    \begin{aligned}
      \sum_{\mfa : N \mfa \leq x} \tau(\mfa)^k &\leq \sum_{\mfb : N \mfb \leq x} \Biggl( \sum_{\mfa : N \mfa \leq \frac{x}{N \mfb}} \tau(\mfa \mfb)^{k-1} \Biggr) \\
      &\leq \sum_{\mfb : N \mfb \leq x} \tau(\mfb)^{k-1} \Biggl( \sum_{\mfa : N \mfa \leq \frac{x}{N \mfb}} \tau(\mfa)^{k-1} \Biggr) \\
      &\lesssim_k \sum_{\mfb : N \mfb \leq x} \tau(\mfb)^{k-1} \frac{x}{N \mfb} \Bigl( \log \frac{x}{N \mfb} \Bigr)^{2^{k-1} - 1}.
    \end{aligned}
  \end{equation}
  Therefore it suffices to verify that:
  \begin{equation}
    \sum_{\mfb : N \mfb \leq x} \tau(\mfb)^{k-1} \frac{x}{N \mfb} \lesssim_k x (\log x)^{2^{k-1}}.
  \end{equation}
  Splitting the sum into dyadic intervals, we end up with:
  \begin{equation}
    \begin{aligned}
      \sum_{\mfb : N \mfb \leq x} \tau(\mfb)^{k-1} \frac{x}{N \mfb} &\leq \sum_{l \leq \log_2 x} \sum_{\mfb : N \mfb \in [\frac{x}{2^{l+1}}, \frac{x}{2^l}]} \tau(\mfb)^{k-1} 2^{l+1} \\
      &\lesssim_k \sum_{l \leq \log_2 x} \frac{x}{2^l} \Bigl( \log \frac{x}{2^l} \Bigr)^{2^{k-1} - 1} 2^{l+1} \\
      &\leq \sum_{l \leq \log_2 x} x (\log x)^{2^{k-1} - 1} \\
      &\lesssim x (\log x)^{2^{k-1}},
    \end{aligned}
  \end{equation}
  as desired.
\end{proof}

\section{Major arc analysis} \label{section-3}

Turning to our spectral analysis, suppose we are interested in $\alpha \in \TT$ that is in \textit{major arc}: there exists an irreducible fraction $\frac{a}{q}$ so that 
\begin{equation}
  |\alpha - \frac{a}{q}| \leq \frac{(\log x)^B}{x^d}
\end{equation}
with $q \leq (\log x)^B$. We will denote the set of these $\alpha$ as $\mathcal{M}_{a/q, B}$ and union of major arcs by:
\begin{equation} \label{major-arc-definition}
  \mathcal{M} := \mathcal{M}_B = \bigcup_{a/q : a \perp q, q \leq (\log x)^B} \mathcal{M}_{a/q, B},
\end{equation}
suppressing the $x$-dependence.
Our goal is to establish the following theorem:
\begin{thm} \label{major-arc-behaviour}
  With $\alpha, a, q, x$ as above, one has the following equality:
  \begin{equation}
    \sum_{p \in \PP_n \cap [x]} e(\alpha P(p)) \log p = \frac{x S(a, q)}{R_n \varphi_2(q_0)} \int_0^1 e \Bigl( (\alpha - \frac{a}{q}) P(xu) \Bigr) du + O_B(x \exp(-c \sqrt{\log x}))
  \end{equation}
  where $R_n = 2 |\Cl(\QQ(\sqrt{-n}))|$.
\end{thm}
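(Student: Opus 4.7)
I would follow the classical Bourgain--Wierdl major-arc scheme, reading it through Hecke $L$-functions of $K=\QQ(\sqrt{-n})$ in place of the Dirichlet $L$-functions used for $\PP$. Write $\alpha=\tfrac{a}{q}+\beta$ with $|\beta|\leq(\log x)^B/x^d$. The first step is purely algebraic bookkeeping: a rational prime $p$ lies in $\PP_n$ precisely when it splits in $K$ as $\mathfrak{p}\overline{\mathfrak{p}}$ with both primes belonging to $\mathcal{P}_{\QQ(\sqrt{-n})}^{n_0}$ (the subgroup of principal ideals generated by some $u+v\omega_n$ with $n_0\mid v$, ignoring the coprimality decoration). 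Pairing each $p$ with its conjugate factor, tracking unit ambiguity, and weighting by the density of this subgroup inside $I_K$ produces
\begin{equation*}
  \sum_{p\in\PP_n\cap[x]} e(\alpha P(p))\log p \;=\; \frac{1}{R_n}\sum_{\substack{\mathfrak{p}\in\mathcal{P}_{\QQ(\sqrt{-n})}^{n_0}\\ N\mathfrak{p}\leq x}} e(\alpha P(N\mathfrak{p}))\log N\mathfrak{p} \;+\; O\bigl(x^{1/2}\log x\bigr),
\end{equation*}
the $O(x^{1/2}\log x)$ absorbing ramified primes, inert primes (which give ideals of norm $p^2$), and higher prime-ideal powers; the bound $\sum_{N\mathfrak{a}\leq x}1\lesssim_n x$ from Lemma~\ref{divisor-function-number-fields} justifies this error size.

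Next I would separate the major-arc phase from the minor fluctuation. Because $P\in\ZZ[x]$ and $q_0=qn_0$, the value $e(\tfrac{a}{q}P(N\mathfrak{p}))$ depends on $\mathfrak{p}$ only through its class in $\mathcal{P}_{\QQ(\sqrt{-n})}^{n_0}(q_0)/\mathcal{P}_{\QQ(\sqrt{-n})}^+(q_0)$; prime ideals sharing a factor with $q_0$ contribute at most $O(\log^2 x)$ and are discarded. Partitioning the ideal sum by this finite quotient and pulling out the constant phase produces exactly $S(a,q)$ as a factor by \eqref{weyl-sum}, leaving the inner sum
\begin{equation*}
  \sum_{\substack{\mathfrak{p}\in\mathcal{P}_{\QQ(\sqrt{-n})}^{n_0},\;N\mathfrak{p}\leq x\\ \mathfrak{p}\equiv\mathfrak{a}\bmod q_0}} e(\beta P(N\mathfrak{p}))\log N\mathfrak{p}.
\end{equation*}
This inner sum is controlled by a Siegel--Walfisz theorem for the narrow ray class group of $K$ modulo $q_0$: expanding the class-indicator by orthogonality of ray class characters $\chi$ and invoking Landau's zero-free region $\sigma>1-c/\log(q_0(|t|+2))$ for each Hecke $L$-function $L(s,\chi)$, together with Siegel's theorem to rule out an exceptional real zero, yields uniformly for $q_0\leq(\log x)^B$ the asymptotic
\begin{equation*}
  \sum_{\substack{\mathfrak{p}\in\mathcal{P}_{\QQ(\sqrt{-n})}^{n_0},\;N\mathfrak{p}\leq y\\ \mathfrak{p}\equiv\mathfrak{a}}} \log N\mathfrak{p} \;=\; \frac{y}{\varphi_2(q_0)\,|\Cl(\QQ(\sqrt{-n}))|} \;+\; O_B\bigl(y\exp(-c\sqrt{\log y})\bigr),
\end{equation*}
and the $|\Cl|$ factor combines with the $R_n=2|\Cl|$ from Step~1 to produce the required prefactor $1/(R_n\varphi_2(q_0))$.

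To conclude, I would introduce the $\beta$-oscillation by Abel summation applied to the above counting asymptotic over $y\in[1,x]$. The main term becomes $\frac{x}{R_n\varphi_2(q_0)}\int_0^1 e(\beta P(xu))\,du$ (the derivative satisfies $|\beta P'(t)|\lesssim_{P}(\log x)^B/x$ on $[1,x]$, so Abel summation is well behaved), and the error, accumulated over $\varphi_2(q_0)\leq (\log x)^{2B}$ residue classes, remains $O_B(x\exp(-c'\sqrt{\log x}))$ after a harmless shrinkage of the constant. \textbf{The main obstacle} is the Siegel--Walfisz step: producing an effective (modulo a Siegel zero) zero-free region for the full family of Hecke $L$-functions attached to characters of the ray class group modulo $q_0$ of $\QQ(\sqrt{-n})$, uniformly over $\chi$ with constants depending only on $B$. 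The remaining steps are essentially careful bookkeeping once the dictionary between $\PP_n$ and $\mathcal{P}_{\QQ(\sqrt{-n})}^{n_0}$ is in place.
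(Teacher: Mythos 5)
Your proposal follows essentially the same route as the paper: both proofs combine (i) the dictionary between $\PP_n$ and prime ideals in $\mathcal{P}_{\QQ(\sqrt{-n})}^{n_0}$ (with the error $O(x^{1/2}\log x)$ for inert/ramified contributions and the factor $R_n$), (ii) a Siegel--Walfisz theorem for Hecke $L$-functions twisted by ray class characters with orthogonality to isolate the coset of $\mathcal{P}^+_{\QQ(\sqrt{-n})}(q_0)$ and produce the $S(a,q)/(|\Cl|\varphi_2(q_0))$ density, and (iii) partial summation followed by a Riemann-sum comparison to replace the exponential sum by the oscillatory integral $\int_0^1 e((\alpha-a/q)P(xu))\,du$. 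The only cosmetic difference is the ordering — the paper derives the twisted prime-ideal asymptotic first (its Proposition \ref{hecke-character-prime-number-theorem} and the ensuing orthogonality computation) and passes to $\PP_n$ afterwards, whereas you convert $\PP_n$ to prime ideals at the outset — and the paper cites Fogels for the exceptional-zero bound rather than naming Siegel's theorem, but the ingredient is the same.
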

We begin the proof by stating the formula which encodes information regarding twisted character sums from Hecke L-functions:

\begin{prop} \label{hecke-character-prime-number-theorem}
  For every Hecke character from $I_{\QQ(\sqrt{-n})} / \mathcal{P}_{\QQ(\sqrt{-n})}^+(q)$ to $S^1$ (see Appendix \ref{appendix} for definitions):
  \begin{equation} \label{hecke-character-prime-number-theorem-identity}
    \sum_{\mathfrak{p} : N \mathfrak{p} \leq x} \chi(\mathfrak{p}) \log N \mathfrak{p} = E(\chi) x + O_B(x \exp(-c \sqrt{\log x}))
  \end{equation}
  where $E(\chi)$ is $1$ when $\chi$ is a trivial character i.e.~the image of $\chi$ belongs to the set $\{0, 1\}$, and $0$ otherwise.
\end{prop}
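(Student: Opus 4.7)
The plan is to derive this as a version of the prime number theorem for the Hecke L-function attached to $\chi$, following the classical template of Hecke and Landau. Set $K = \QQ(\sqrt{-n})$, regard $\chi$ as a character of the ray class group $I_K(q)/\mathcal{P}_K^+(q)$ extended by zero to ideals not coprime to $q$, and form
\[
L(s, \chi) = \sum_{\mfa} \chi(\mfa) (N\mfa)^{-s} = \prod_{\mfp} \bigl(1 - \chi(\mfp)(N\mfp)^{-s}\bigr)^{-1}, \qquad \Re s > 1.
\]
Hecke's thesis supplies the meromorphic continuation and functional equation of $L(s, \chi)$: for $\chi$ nontrivial it is entire on $\CC$, and for $\chi$ trivial it differs from the Dedekind zeta function $\zeta_K(s)$ only in finitely many Euler factors lying above $q$, hence has a simple pole at $s=1$ with positive residue.

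Next I would install the classical zero-free region: there exists $c = c(n, q) > 0$ such that $L(s, \chi) \neq 0$ in $\{\sigma > 1 - c/\log(|t|+2)\}$, with the possible exception of a single real ``Siegel'' zero when $\chi$ is a real character. The standard proof comes from the Mertens-type inequality applied to $\log|\zeta_K(s)^3 L(s, \chi)^4 L(s, \chi^2)|$, exploiting nonnegativity of $3 + 4\cos\theta + \cos 2\theta$, together with a Hadamard factorization for $L(s, \chi)$; both steps transport from $\QQ$ to $K$ without essential change. With this in hand, Perron's formula applied to $-L'(s, \chi)/L(s, \chi) = \sum_\mfa \chi(\mfa) \Lambda(\mfa) (N\mfa)^{-s}$ followed by shifting the contour leftward into the zero-free region up to height $T = \exp(c\sqrt{\log x})$ collects the residue at $s = 1$ (contributing $x$ if $\chi$ is trivial, and zero otherwise) and yields
\[
\sum_{N\mfa \leq x} \chi(\mfa) \Lambda(\mfa) = E(\chi)\, x + O_{n, q}\bigl(x \exp(-c\sqrt{\log x})\bigr).
\]
Restriction from ideals $\mfa$ to prime ideals $\mfp$ is then essentially free: contributions from prime powers $\mfp^k$ with $k \geq 2$ are bounded by $O(\sqrt{x} \log x)$ using \cref{divisor-function-number-fields}, which is comfortably absorbed into the stated error.

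The principal obstacle is the possibility of an exceptional Siegel zero $\beta < 1$ attached to a real character $\chi$, which would otherwise contribute a spurious $-x^\beta/\beta$ dominating the claimed error. This is precisely why the implied constants must be allowed to depend on $B$ (and hence $q$): invoking Siegel's ineffective bound for Hecke L-functions of $K$ gives $1 - \beta \gg_\epsilon q^{-\epsilon}$, which converts to $x^\beta \ll_\epsilon x \exp(-c_\epsilon \sqrt{\log x})$ for $q \leq (\log x)^B$, compatible with the error in the statement. All other steps are bookkeeping familiar from the rational case (cf.\ \cite{iwaniec2021analytic}), and the number-field analogues are by now equally standard.
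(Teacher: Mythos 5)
Your proposal is correct and takes essentially the same approach as the paper. The paper's proof consists of a one-sentence citation: it invokes Theorems 5.33 and 5.35 from \cite{iwaniec2021analytic} (which package exactly the Perron/contour-shift/zero-free-region machinery you unwind) together with the Siegel-zero bound for Hecke L-functions from \cite{fogels1963ausnahmenullstelle}, so your explicit reconstruction is just what those references contain.
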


This fact can be derived by combining Theorem 5.33, Theorem 5.35 from \cite{iwaniec2021analytic} together with the Siegel zero upper bound coming from \cite{fogels1963ausnahmenullstelle}. With this in mind, we may now move to the proof of Theorem \ref{major-arc-behaviour}.
\begin{proof}
  Summing \eqref{hecke-character-prime-number-theorem-identity} over all Hecke characters times $\frac{1}{|\Cl(\QQ(\sqrt{-n}))| \varphi_2(q)}$, and representatives $\mathfrak{a}$ corresponding to principal ideal classes $\mathcal{P}_{\QQ(\sqrt{-n})} / \mathcal{P}_{\QQ(\sqrt{-n})}^+(q)$ in $I_{\QQ(\sqrt{-n})} / \mathcal{P}_{\QQ(\sqrt{-n})}^+(q)$ twisted by $e(\frac{a N \mathfrak{a}}{q})$, we get (for the brevity reasons the $c$ constant in the exponential may vary from line to line):
  \begin{equation} \label{l-function-reduction}
    \begin{gathered}
      \sum_{\mathfrak{a} \in \mathcal{P}_{\QQ(\sqrt{-n})}^{n_0}(q_0) / \mathcal{P}_{\QQ(\sqrt{-n})}^+(q_0)} e\Bigl(\frac{a P(N \mathfrak{a})}{q}\Bigr) \sum_{\chi \in (I_{\QQ(\sqrt{-n})}(q_0) / \mathcal{P}_{\QQ(\sqrt{-n})}^+(q_0))^*} \sum_{\mfp : N \mfp \leq x} \frac{\chi(\mfp) \log N \mfp \bar{\chi}(\mfa)}{|\Cl(\QQ(\sqrt{-n}))| \varphi_2(q_0)} \\
      = \sum_{\mathfrak{a} \in \mathcal{P}_{\QQ(\sqrt{-n})}^{n_0}(q_0) / \mathcal{P}_{\QQ(\sqrt{-n})}^+(q_0)} e\Bigl(\frac{a P(N \mathfrak{a})}{q}\Bigr) \sum_{\chi \in (I_{\QQ(\sqrt{-n})}(q_0) / \mathcal{P}_{\QQ(\sqrt{-n})}^+(q_0))^*} \bar{\chi}(\mfa) \frac{(E(\chi) x + O_B(x \exp(-c \sqrt{\log x})))}{|\Cl(\QQ(\sqrt{-n}))| \varphi_2(q_0)} \\
      = \sum_{\mathfrak{a} \in \mathcal{P}_{\QQ(\sqrt{-n})}^{n_0}(q_0) / \mathcal{P}_{\QQ(\sqrt{-n})}^+(q_0)} \Biggl( e\Bigl(\frac{a P(N \mathfrak{a})}{q}\Bigr) \frac{x}{|\Cl(\QQ(\sqrt{-n}))| \varphi_2(q_0)} + O_B (x \exp(-c \sqrt{\log x})) \Biggr) \\
      = \sum_{\mathfrak{a} \in \mathcal{P}_{\QQ(\sqrt{-n})}^{n_0}(q_0) / \mathcal{P}_{\QQ(\sqrt{-n})}^+(q_0)} e\Bigl(\frac{a P(N \mathfrak{a})}{q}\Bigr) \frac{x}{|\Cl(\QQ(\sqrt{-n}))| \varphi_2(q_0)} + O_B (x \exp(-c \sqrt{\log x})),
    \end{gathered}
  \end{equation}
  since $q^2$ can be incorporated into $O(x \exp(-c \sqrt{\log x}))$ with no harm; above $G^\ast$ denotes dual group to group $G$. If we rearrange the first two sums in the left-hand side, we end up with an inner sum that can be reduced using the orthogonality of Hecke characters:
  \begin{equation} \label{orthogonality-reduction}
    \begin{gathered}
      \sum_{\mathfrak{a} \in \mathcal{P}_{\QQ(\sqrt{-n})}^{n_0}(q_0) / \mathcal{P}_{\QQ(\sqrt{-n})}^+(q_0)} e\Bigl(\frac{a P(N \mathfrak{a})}{q}\Bigr) \sum_{\mfp : N \mfp \leq x} \sum_{\chi \in (I_{\QQ(\sqrt{-n})}(qn_0) / \mathcal{P}_{\QQ(\sqrt{-n})}^+(q_0))^*} \frac{\chi(\mfp) \log N \mfp \bar{\chi}(\mfa)}{|\Cl(\QQ(\sqrt{-n}))| \varphi_2(q_0)} \\
      = \sum_{\mathfrak{a} \in \mathcal{P}_{\QQ(\sqrt{-n})}^{n_0}(q_0) / \mathcal{P}_{\QQ(\sqrt{-n})}^+(q_0)} e\Bigl(\frac{a P(N \mathfrak{a})}{q}\Bigr) \sum_{\mfp : N \mfp \leq x} \mathbbm{1}_{\mfp \equiv \mfa \pmod{\mathcal{P}_{\QQ(\sqrt{-n})}^+(q_0)}} \log N \mfp \\
      = \sum_{\substack{\mfp : N \mfp \leq x \\ \mfp \in \mathcal{P}_{\QQ(\sqrt{-n})}^{n_0}}} e \Bigl(\frac{a P(N \mfp)}{q} \Bigr) \log N \mfp + O_n(q),
    \end{gathered}
  \end{equation} 
  where last equality follows from the fact that double sum runs through the same set of prime ideals as the last sum (up to prime ideals containing $q_0$).
  Recalling that $S(a, q)$ is defined in \eqref{weyl-sum}, and combining formulas \eqref{l-function-reduction} and \eqref{orthogonality-reduction} yields:
  \begin{equation} \label{prime-ideals-vs-prime-passing-before}
    \sum_{\substack{\mfp : N \mfp \leq x \\ \mfp \in \mathcal{P}_{\QQ(\sqrt{-n})}^{n_0}}} e\Bigl(\frac{a P(N \mfp)}{q}\Bigr) \log N \mfp = S(a, q) \frac{x}{|\Cl(\QQ(\sqrt{-n}))| \varphi_2(q_0)} + O_B (x \exp(-c \sqrt{\log x})). \\
  \end{equation}
  We use the well-known fact that the prime ideals in $\mathcal{O}_{\QQ(\sqrt{-n})}$ either have norm $r^2$ for a rational prime $r$, which occurs only when $\mfp = (r)$, or $N \mfp$ is itself prime. If we sum the expression 
  \begin{equation}
    e \Bigl( \frac{a P(N \mfp)}{q} \Bigr) \log N \mfp 
  \end{equation}
  over prime ideals from the first case, we get the bound $x^{1/2} (\log x)$, which can be incorporated into $O_B (x \exp(-c \sqrt{\log x}))$. In the sum above, we are summing over prime ideals belonging $\mathcal{P}_{\QQ(\sqrt{-n})}^{n_0}$, therefore by the choice of $n_0$, one has that $N \mfp$ is of the form $x^2 + ny^2$. This correspondence acts in two ways, i.e.~every prime of the form $x^2 + ny^2$ can be written as the norm of a prime ideal $\mfp \in \mathcal{P}_{\QQ(\sqrt{-n})}^{n_0}$ for two different choices of $\mfp$. Therefore:
  \begin{equation} \label{prime-ideals-vs-prime-passing}
    \sum_{p \in \PP_n \cap [x]} e\Bigl(\frac{aP(p)}{q}\Bigr) \log p = S(a, q) \frac{x}{2 |\Cl(\QQ(\sqrt{-n}))| \varphi_2(q_0)} + O_B (x \exp(-c \sqrt{\log x})). \\
  \end{equation}
  If we denote $R_n := 2 |\Cl(\QQ(\sqrt{-n}))|$, then a standard summation by parts argument describes:
  \begin{equation}
    \sum_{p \in \PP_n \cap [x]} e (\alpha P(p)) \log p
  \end{equation}
  for $\alpha$ close to $\frac{a}{q}$. More precisely, we have:
  \begin{equation}
    \begin{gathered}
      \sum_{p \in \PP_n \cap [x]} e (\alpha P(p)) \log p - \frac{S(a, q)}{R_n \varphi_2(q_0)} \sum_{m \leq x} e \Bigl( (\alpha - \frac{a}{q}) P(m) \Bigr)  \\
      = \sum_{m \leq x} e \Bigl( (\alpha - \frac{a}{q}) P(m) \Bigr) \Biggl( e \Bigl( \frac{a P(m)}{q}\Bigr) \log m \mathbbm{1}_{m \in \PP_n} - \frac{S(a, q)}{R_n \varphi_2(q_0)} \Biggr) \\
      = O_B \Bigl(x (1 + x^d |\alpha - \frac{a}{q}|) \exp(-c \sqrt{\log x})\Bigr).
    \end{gathered}
  \end{equation}
  In particular, for $\alpha$ in the major arc surrounding $\frac{a}{q}$ i.e.~when $|\alpha - \frac{a}{q}| \leq \frac{(\log x)^B}{x^d}$, the error term is still of form $O_B(x \exp(-c \sqrt{\log x}))$. Eventually we obtain that:
  \begin{equation} \label{major-arc-behaviour-before}
    \sum_{p \in \PP_n \cap [x]} e(\alpha P(p)) \log p = \frac{S(a, q)}{R_n \varphi_2(q_0)} \sum_{m \leq x} e \Bigl( (\alpha - \frac{a}{q}) P(m) \Bigr) + O_B(x \exp(-c \sqrt{\log x})).
  \end{equation}
  We would now like to replace the exponential sum above with an appropriate oscillatory integral as in this case standard upper estimates on oscillatory integrals are more convenient than estimates on exponential sums. Observe that:
  \begin{equation}
    \bigg| e\Bigl( (\alpha - \frac{a}{q}) P(m) \Bigr) - \int_{m - 1}^m e \Bigl((\alpha - \frac{a}{q}) P(y) \Bigr) dy \bigg| \leq 2 \pi \bigg|\alpha - \frac{a}{q} \bigg| \sup_{y \in [m, m+1]} |P'(y)|
  \end{equation}
  so by the triangle inequality and the Riemann summation we derive the following:
  \begin{equation}
    \bigg| \sum_{m \leq x} e \Bigl( (\alpha - \frac{a}{q}) P(m) \Bigr) - \int_0^x e \Bigl((\alpha - \frac{a}{q}) P(y) \Bigr) dy \bigg| = O_d((\log x)^B),
  \end{equation}
  as $m \to \sup_{y \in [m-1, m]} |P'(y)| = O_P(m^{d - 1})$. Incorporating the above formula into \eqref{major-arc-behaviour-before} and changing variables, we obtain:
  \begin{equation} \label{major-arc-behaviour-2}
    \sum_{p \in \PP_n \cap [x]} e(\alpha P(p)) \log p = \frac{x S(a, q)}{R_n \varphi_2(q_0)} \int_0^1 e \Bigl( (\alpha - \frac{a}{q}) P(xu) \Bigr) du + O_B(x \exp(-c \sqrt{\log x})),
  \end{equation}
  finishing the proof of Theorem \ref{major-arc-behaviour}.
\end{proof}

\section{Minor arc analysis} \label{section-4}

Our goal in this section will be showing the following proposition:
\begin{thm} \label{minor-arc-inequality}
  For any $A \gg 1$ there exists bigger $B \gg A$, so that whenever $\alpha \in m_B := \TT \backslash \mathcal{M}_B$, we have: 
  \begin{equation}
    \sum_{\substack{\mfn : N \mfn \leq x \\ \mfn \in \mathcal{P}_{\QQ(\sqrt{-n})}^{n_0}}} \Lambda(\mfn) e(\alpha P(N \mfn)) \lesssim_n \frac{x}{(\log x)^A}.
  \end{equation}
\end{thm}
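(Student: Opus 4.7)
The plan is to adapt the classical Vinogradov--Vaughan minor-arc strategy to the ring of integers $\mathcal{O}_{\QQ(\sqrt{-n})}$, exploiting the lattice parametrization of ideals via the integral basis $\{1, \omega_n\}$. First, I would remove the principal-ideal constraint $\mfn \in \mathcal{P}_{\QQ(\sqrt{-n})}^{n_0}$ by orthogonality of Hecke characters modulo $\mathcal{P}_{\QQ(\sqrt{-n})}^+(q_0)$, exactly as in the reduction \eqref{l-function-reduction} of Section~\ref{section-3}. This writes the target as a finite linear combination, with $O_n(1)$ bounded coefficients, of character-twisted sums
\begin{equation*}
  \sum_{\mfn : N\mfn \le x} \chi(\mfn)\, \Lambda(\mfn)\, e(\alpha P(N\mfn)),
\end{equation*}
so it suffices to bound each such sum by $x/(\log x)^A$ uniformly in the Hecke character $\chi$.

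Next, I would apply Vaughan's identity for $\Lambda$ on the Dedekind domain $\mathcal{O}_{\QQ(\sqrt{-n})}$, with parameters $U = V = x^{1/3}$, to decompose $\Lambda(\mfn)$ into a bounded number of Type I pieces, supported on pairs $(\mfd, \mfm)$ with $\mfd\mfm = \mfn$ and $N\mfd \le UV$, and Type II pieces with $N\mfd, N\mfm \in (U, x/U)$. In each case the accompanying coefficients are bounded by divisor-type functions, so by Lemma~\ref{divisor-function-number-fields} this costs only polylogarithmic factors.

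For the Type I sums, after fixing $\mfd$ and summing $\mfm$ over a fixed ideal class, I would parametrize $\mfm$ by lattice points $(u,v) \in \ZZ^2$ via $\{1, \omega_n\}$ (quotienting by the finite unit group of $\QQ(\sqrt{-n})$). The inner sum becomes a two-variable exponential sum of a polynomial of degree $2d$ in $(u,v)$ whose $u^{2d}$-coefficient is $\alpha(N\mfd)^d$ times the leading coefficient of $P$. Since $\alpha \in m_B$, Dirichlet's theorem together with the defining minor-arc condition forces any rational approximation $a/q$ to $\alpha$ with $q \le x^d(\log x)^{-B}$ to have $q > (\log x)^B$; a two-dimensional Weyl inequality (of the type underpinning Lemma~\ref{arithmetic-functions-estimates}) then yields a saving of $(\log x)^{-cB}$ per $\mfd$, which sums to $x/(\log x)^A$ once $B$ is chosen sufficiently large in terms of $A$.

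The Type II contribution is the principal obstacle. Here I would apply Cauchy--Schwarz in $\mfm$, reducing to an estimate for the off-diagonal part of
\begin{equation*}
  \sum_{\mfm_1, \mfm_2} b_{\mfm_1} \overline{b_{\mfm_2}} \sum_{\mfd} e\!\Big(\alpha\big[P(N(\mfd\mfm_1)) - P(N(\mfd\mfm_2))\big]\Big),
\end{equation*}
then parametrize $\mfd$ by $(u,v) \in \ZZ^2$ and split off the diagonal $\mfm_1 = \mfm_2$ (bounded via Lemma~\ref{divisor-function-number-fields}). On the off-diagonal, the inner polynomial in $(u, v)$ has a nonvanishing degree-$2d$ part whose leading coefficient is a rational function of $N\mfm_1, N\mfm_2$, and the two-variable Weyl estimate again produces a saving. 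The most delicate point is controlling the collapse of this leading coefficient when $N\mfm_1$ and $N\mfm_2$ share large common factors with the denominator of the rational approximation to $\alpha$ — precisely the scenario handled by the GCD factor $(r_{2d}, p^\alpha)^{c_d}$ in the proof of Lemma~\ref{arithmetic-functions-estimates}. Summing over $\mfm_1, \mfm_2$ and choosing $B$ large enough to absorb all divisor-function losses completes the proof.
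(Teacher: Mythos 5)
Your high-level skeleton—Vaughan's identity on $\mathcal{O}_{\QQ(\sqrt{-n})}$, split into Type I and Type II, then exponential-sum estimates—matches the paper's. But the route diverges at the crucial technical step, and as written there is a genuine gap.

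The paper does not re-prove Weyl/bilinear estimates in two variables. After Vaughan and a split by ideal class (via fixing a class representative $\mfa_0$, not via Hecke orthogonality—though your variant would also work), the paper \emph{groups ideals by norm}: it sets $z_c = \sum_{N\mfa = c} x_\mfa$, $t_d = \sum_{N\mfb = d} y_\mfb$, collapsing each bilinear ideal sum to an ordinary rational-integer bilinear sum $\sum_{a,b} z_a t_b\, e(\alpha P(ab))$ with divisor-type coefficient bounds (costing at most $\tau^4$, absorbed by Lemma~\ref{divisor-function-number-fields}). It then dyadically pigeonholes and invokes Proposition~2.2 of \cite{matomaki2021discorrelation} as a black box: if such a bilinear sum is $\gtrsim x/(\log x)^{A+2}$, then $\alpha$ must lie near a rational with denominator $(\log x)^{O_{d,A}(1)}$, contradicting the minor-arc hypothesis once $B$ is chosen large. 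This reduction to a known one-dimensional result is the whole point of the section; it sidesteps any $(u,v)$-lattice analysis entirely.

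Your proposal keeps the lattice parametrization and tries to do the exponential-sum work directly, and this is where the gap is. For Type~I, you assert that "since $\alpha \in m_B$ \ldots a two-dimensional Weyl inequality yields a saving of $(\log x)^{-cB}$ per $\mfd$." But Weyl applied to the inner $(u,v)$-sum requires a Diophantine condition on $\alpha\, r_d\, (N\mfd)^d$, not on $\alpha$; and as $\mfd$ ranges, $(N\mfd)^d$ can share large factors with the denominator of the best rational approximation to $\alpha$, causing the effective denominator to collapse and the Weyl saving to evaporate for those $\mfd$. One must then show the exceptional set of $\mfd$ is small (a divisor-counting argument), which is a real piece of work you do not supply. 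The same issue recurs, more acutely, in your Type~II after Cauchy--Schwarz: the leading coefficient becomes proportional to $N\mfm_1^d - N\mfm_2^d$, and you correctly flag the GCD collapse, but "precisely the scenario handled by the GCD factor $(r_{2d},p^\alpha)^{c_d}$ in Lemma~\ref{arithmetic-functions-estimates}" is not a resolution—Lemma~\ref{arithmetic-functions-estimates} treats complete sums over $[q_0]^2$ and does not give the incomplete-range bilinear estimate you need here, summed over pairs $(\mfm_1,\mfm_2)$. To repair the argument along your lines you would essentially have to reprove something comparable to the Matomäki--Radziwi\l\l--Tao Type~I/II discorrelation estimate; the paper's grouping-by-norm trick is what lets one avoid this.
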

The argument from earlier \eqref{prime-ideals-vs-prime-passing} and the above inequality will suffice to yield:
\begin{equation} \label{minor-arc-inequality-rational-prime-version}
  \sum_{p \in \PP_n \cap [x]} \Lambda(p) e(\alpha P(p)) \lesssim_n \frac{x}{(\log x)^A}.
\end{equation}
Before we start discussing the proof of \ref{minor-arc-inequality}, let us recall Vaughan's identity for general number fields, which will be of crucial importance to the below line of reasoning:
\begin{equation} \label{vaughan-identity-number-fields}
  \begin{aligned}
    \Lambda(\mfn) &= \sum_{\substack{\mfd \mfa = \mfn \\ N \mfd \leq V}} \mu(\mfd) \log N \mfa - \sum_{\substack{\mfm \mfd \mfa = \mfn \\ N \mfm \leq U \\ N \mfd \leq V}} \Lambda(\mfm) \mu(\mfd) \\
    &+ \sum_{\substack{\mfm \mfd \mfa = \mfn \\ N \mfm > U \\ N \mfd > V}} \Lambda(\mfm) \mu(\mfd)
  \end{aligned}
\end{equation}
as long as the norm of $\mfn$ exceeds $U$. For context, we recall Vaughan's identity for integers, which presents as follows:
\begin{equation}
  \begin{aligned}
    \Lambda(n) &= \sum_{\substack{da = n \\ d \leq V}} \mu(d) \log a - \sum_{\substack{mda = n \\ m \leq U \\ d \leq V}} \Lambda(m) \mu(d) \\
    &+ \sum_{\substack{mda = n \\ m > U \\ d > V}} \Lambda(m) \mu(d)
  \end{aligned}
\end{equation}
whenever $n > U$. In the proof of Theorem \ref{minor-arc-inequality}, we will still require to interpret \eqref{vaughan-identity-number-fields} in the language of numbers instead of ideals. 
\begin{proof}[Proof of \ref{minor-arc-inequality}]
  For convenience we introduce the notation $\mathcal{P}_{\QQ(\sqrt{-n})}^{n_0} = \mathfrak{P}$. We twist the left-hand side of \eqref{vaughan-identity-number-fields} by $e(\alpha N \mfn)$ and sum over all principal ideals with norm at most $x$. Therefore:
  \begin{equation} \label{twisted-vaughan-identity}
    \begin{aligned}
      \sum_{\substack{\mfn : N \mfn \leq x \\ \mfn \in \mathfrak{P}}} \Lambda(\mfn) e(\alpha P(N \mfn)) &= \sum_{\substack{\mfn : N \mfn \leq x \\ \mfn \in \mathfrak{P}}} \sum_{\substack{\mfd \mfa = \mfn \\ N \mfd \leq V}} \mu(\mfd) \log (N \mfa) e(\alpha P(N \mfn)) - \sum_{\substack{\mfn : N \mfn \leq x \\ \mfn \in \mathfrak{P}}} \sum_{\substack{\mfm \mfd \mfa = \mfn \\ N \mfm \leq U \\ N \mfd \leq V}} \Lambda(\mfm) \mu(\mfd) e(\alpha P(N \mfn)) \\ 
      &+ \sum_{\substack{\mfn : N \mfn \leq x \\ \mfn \in \mathfrak{P}}} \sum_{\substack{\mfm \mfd \mfa = \mfn \\ N \mfm > U \\ N \mfd > V}} \Lambda(\mfm) \mu(\mfd) e(\alpha P(N \mfn)) + O(U) =: S_1 + S_2 + S_3 + O(U).
    \end{aligned}
  \end{equation}
  Set $U = V = x^{2/5}$; we show that every double sum on the right-hand side is of form:
  \begin{equation}
    \sum_{\substack{\mfa, \mfb : N \mfa \leq R \\ N \mfa \mfb \leq x \\ \mfa \mfb \in \mathfrak{P}}} x_\mfa y_\mfb e(\alpha P(N \mfa \mfb))
  \end{equation}
  where:
  \begin{enumerate}[label=(\Alph*)]
    \item \label{type-II-sum-constraints-a} For every two ideals $\mfa, \mfb \in I_{\QQ(\sqrt{-n})}$, one has that $|x_\mfa| \leq \tau(\mfa)$ and $|y_\mfb| \leq \log N \mfb$.
    \item \label{type-II-sum-constraints-b} $R$ does not exceed $x^{9 / 10}$.
  \end{enumerate}
  For the respective sums on the right-hand side of \eqref{twisted-vaughan-identity}, one can just make the following assignments:
  \begin{enumerate}
    \item In the case of $S_1$, take $x_\mfa = \mu(\mfa)$ and $y_\mfb = \log(N \mfb)$;
    \item In the case of $S_2$, take $x_\mfa = \mu(\mfa) \cdot \mathbbm{1}_{N \mfa < V}$ and $y_\mfb = \sum_{\mfm | \mfb : N \mfm \leq U} \Lambda(\mfm)$;
    \item In the case of $S_3$, take $x_\mfa = \sum_{\mfd | \mfa : N \mfd > V} \mu(\mfd)$ and $y_\mfb = \Lambda(\mfb) \cdot \mathbbm{1}_{N \mfb > U}$.
  \end{enumerate}
  Therefore; our focus moves to showing that under Properties \ref{type-II-sum-constraints-a} and \ref{type-II-sum-constraints-b} from above list we have:
  \begin{equation} \label{minor-arc-inequality-2}
    \bigg| \sum_{\substack{\mfa, \mfb : N \mfa \leq R \\ N \mfa \mfb \leq x \\ \mfa \mfb \in \mathfrak{P}}} x_\mfa y_\mfb e(\alpha P(N \mfa \mfb)) \bigg| \lesssim_n \frac{x}{(\log x)^A}.
  \end{equation}
  Fix a representative $\mfa_0 \in I_{\QQ(\sqrt{-n})} / \mathfrak{P}$ and its inverse $\mfb_0$, due to finiteness of the class group, we will prove only that:
  \begin{equation} \label{minor-arc-inequality-3}
    \bigg| \sum_{\substack{\mfa, \mfb : N \mfa \leq R \\ N \mfa \mfb \leq x \\ \mfa \equiv \mfa_0, \mfb \equiv \mfb_0 \pmod{\mathfrak{P}}}} x_\mfa y_\mfb e(\alpha P(N \mfa \mfb)) \bigg| \lesssim_n \frac{x}{(\log x)^A}.
  \end{equation} 
  Grouping ideals with the same norm:
  \begin{eqnarray}
    z_c &= \sum_{\mfa \equiv \mfa_0 \pmod{\mathfrak{P}} : N \mfa = c} x_\mfa, \\
    t_d &= \sum_{\mfb \equiv \mfb_0 \pmod{\mathfrak{P}} : N \mfb = d} y_\mfb,
  \end{eqnarray}
  we can reduce the task of proving \eqref{minor-arc-inequality-3} to estimating
  \begin{equation} \label{minor-arc-inequality-6}
    \bigg| \sum_{a, b \geq 0: a \leq R , b \leq S, ab \leq x } z_a t_b e(\alpha P(ab)) \bigg| \lesssim_n \frac{x}{(\log x)^A}, 
  \end{equation}
  where:
  \begin{enumerate}
    \item The numbers $z_a$ and $t_b$ respect the bounds:
    \begin{equation}
      |z_a| \lesssim_n \tau^2(a) \leq \tau^4(a) 
    \end{equation}
    and
    \begin{equation}
      |t_b| \lesssim_n (\log b + O_n(1)) \tau(b) \leq (\log b + O_n(1)) \tau^3(b). 
    \end{equation}
    These inequalities come from the fact that there are at most $2$ prime ideal factors occuring in factorization of ideal $(p) \subset \mathcal{O}_{\QQ(\sqrt{-n})}$ for $p \in \PP$. 
  \end{enumerate}
  So, suppose that $k$ is chosen so that $2^k \sim (\log x)^{A + 128}$; then the left-hand side expression under \eqref{minor-arc-inequality-6} obeys:
  \begin{equation}
    \begin{aligned}
      \bigg| \sum_{\substack{a, b \geq 0: a \leq R \\ ab \leq x }} z_a t_b e(\alpha P_2(ab)) \bigg| &\leq \bigg| \sum_{\substack{a, b \geq 0: a \leq R \\ ab \leq x/2^k }} z_a t_b e(\alpha P_2(ab)) \bigg| + \\
      &+ \sum_{j = 1}^k \bigg| \sum_{\substack{a, b \geq 0: a \leq R \\ ab \in (x/2^j, x/2^{j-1}] }} z_a t_b e(\alpha P_2(ab)) \bigg|.
    \end{aligned}
  \end{equation}
  The first term on the right-hand side is $\frac{x}{(\log x)^{A + 1}}$ due to Lemma \ref{divisor-function-number-fields} in the field of rational numbers. On the other hand, if \eqref{minor-arc-inequality-6} is not satisfied, then by the pigeonhole principle, we extract some $j \leq k$ so that:
  \begin{equation}
    \bigg| \sum_{\substack{a, b \geq 0: a \leq R \\ ab \in (x/2^j, x/2^{j-1}] }} z_a t_b e(\alpha P_2(ab)) \bigg| \gtrsim_n \frac{x}{(\log x)^{A + 1}}.
  \end{equation}
  Due to further dyadic pigeonholing, there is $M \leq R$ such that:
  \begin{equation}
    \bigg| \sum_{\substack{a, b \geq 0: a \sim M \\ ab \in (x/2^j, x/2^{j-1}] }} z_a t_b e(\alpha P_2(ab)) \bigg| \gtrsim_n \frac{x}{(\log x)^{A + 2}}.
  \end{equation}
  With this lower bound in mind, Proposition 2.2 from \cite{matomaki2021discorrelation} is satisfied with $H = N = \frac{x}{2^j}$ and $\delta = \frac{1}{(\log x)^{A + 35}}$.
  In particular, if $r_d$ is the leading coefficient of $P_2$, then there exists $q \leq \delta^{-O_d(1)} \leq (\log x)^{O_{d, A}(1)}$, so that:
  \begin{equation}
    \norm{q d r_d \alpha}_{\RR / \ZZ} \leq \delta^{-O_d(1)} \frac{2^{jd}}{x^d} \leq \frac{(\log x)^{O_{d, A}(1)}}{x^d},
  \end{equation}
  as both $2^{jd}$ and $\delta^{-O_d(1)}$ have logarithmic size. Furthermore, $r_d$ is of order $O_{P, n}(1)$, so $\alpha$ must lie inside a major arc around a fraction with denominator of size $(\log x)^{O_{d, A}(1)}$ and $\frac{(\log x)^{O_{d, A}(1)}}{x}$ close to it. Putting $B \gg O_{d, A}(1)$ gives us contradiction with the fact that we are in minor arc. Therefore \eqref{minor-arc-inequality-6} and in consequence Theorem \ref{minor-arc-inequality} are both satisfied.
\end{proof}

\section{Proof of Theorem \ref{ergodic-theorem-special-prime-numbers} - first part} \label{section-5}

With these preliminaries in hand, we now move to showing Theorem \ref{ergodic-theorem-special-prime-numbers}. 
We discuss a few standard reductions:
\begin{enumerate}
  \item Adding weights to the $A_m$ averages i.e.
  \begin{equation}
    A_m' f(x) = \frac{1}{m} \sum_{p \leq m : p \in \PP_n} \log(p) f(T^{P(p)} x)
  \end{equation}
  will not affect the convergence theorem (see Lemma 1 from \cite{wierdl1988pointwise}). To get the sum of weights equal to $1$, we would need the weights to be multiplied by $R_n = 2|\Cl(\QQ(\sqrt{-n}))|$, however for clarity and correctness of the argument that gives no advantage.
  \item Bourgain in \cite{bourgain2006approach} gave a procedure to yield a pointwise convergence theorem like Theorem \ref{ergodic-theorem-prime-number-case} from a maximal ergodic inequality and orthogonality (he proved a weaker special case of the Theorem \ref{variational-ergodic-theorem-special-prime-numbers}). That procedure works in our case as well, therefore for now, we will just focus on showing that:
  \begin{equation} \label{lp-maximal-ergodic-inequality}
    \norm{\sup_m | A_m' f |}_{L^p(X)} \lesssim \norm{f}_{L^p(X)}.
  \end{equation}
  \item A standard transference principle of Calderon (see pages 86-88 from \cite{krause2022discrete}) says that the only system for which we need to prove \eqref{lp-maximal-ergodic-inequality} is $(\ZZ, S)$ where $S$ maps $x \in \ZZ$ to $x-1$.
  In the case of the integers, applying the weighted average operator $A_m'$ to the function $f$ is the same as convolving it with $K_m$ where:
  \begin{equation} \label{kernel-of-ergodic-average}
    K_m(x) = \frac{1}{m} \sum_{p \in \PP_n \cap [m]} \mathbbm{1}_{x = P(p)} \log p 
  \end{equation}
  \item We can restrict to the case $f \geq 0$, and restrict our set of times to powers of $2$, so it suffices to prove:
  \begin{equation}
    \norm{\sup_{m \in 2^{\NN}} \bigg| A_m' f \bigg|}_{l^p(\ZZ)} \lesssim \norm{f}_{l^p(\ZZ)}
  \end{equation} 
\end{enumerate}

Define now $L_m'$ and $v_m, \varphi_s$ as follows: 
\begin{equation} \label{approximant-definition}
  \begin{aligned}
    \widehat{L_m'}(\alpha) &= \sum_{2^s \leq (\log m)^B} \sum_{q \in [2^{s-1}, 2^s)} \sum_{a \in [q]: (a, q) = 1} \frac{S(a, q)}{R_n \varphi_2(q_0)} v_m \Bigl(\alpha - \frac{a}{q}\Bigr) \varphi_{6s} \Bigl(\alpha - \frac{a}{q} \Bigr), \\
    v_m(\alpha) &= \int_0^1 e ( \alpha P(mu) ) du, \\
    \varphi_s(\alpha) &= \varphi(2^s \alpha) \\
  \end{aligned}
\end{equation}
for a smooth mollifier $\varphi$ satisfying: 
\begin{equation}
  \mathbbm{1}_{[-1/4, 1/4]} \leq \varphi \leq \mathbbm{1}_{[-1/2, 1/2]},
\end{equation}
$R_n = 2 |\Cl(\QQ(\sqrt{-n}))|$ as above, and $B$ comes from our particular choice in Theorem \ref{minor-arc-inequality}.
We can group the fractions by denominator size to form operators $L_{m, s} : \ZZ \to \CC$ which respect:
\begin{enumerate}
  \item $L_m'$ is the sum of $(L_{m, s})_{s \leq B \log_2 \log m}$;
  \item On the Fourier side $L_{m, s}$ has the expansion:
  \begin{equation} \label{dyadic-part-kernel-approximant}
    \widehat{L_{m, s}}(\alpha) = \sum_{q \in [2^{s-1}, 2^s)} \sum_{a \in [q]: (a, q) = 1} \frac{S(a, q)}{R_n \varphi_2(q_0)} v_m \Bigl(\alpha - \frac{a}{q}\Bigr) \varphi_{6s} \Bigl(\alpha - \frac{a}{q} \Bigr).
  \end{equation}
  \item We also provide the operators $L_m$ \textit{built with larger amount of increments} i.e.:
  \begin{equation}
    L_m = \sum_{2^s \leq \sqrt{m}/16} L_{m, s}.
  \end{equation}
  The operators $L_m'$ are more convenient to use for the approximations in Fourier space we are about to introduce, but $(L_m)_{m \in \NN}$ will be more convenient for Bourgain's \textit{superorthogonality} approach.
\end{enumerate}

\subsection{Providing a Fourier approximant for the kernels $K_m$}
Our objective for this subsection is to prove that:
\begin{equation} \label{special-prime-kernel-and-approximant-on-fourier-side}
  \norm{\widehat{K_m} - \widehat{L_m'}}_{L^\infty(\TT)} \lesssim_{n, B} \frac{1}{(\log m)^A}.
\end{equation}

\begin{rmk} \label{special-prime-kernel-and-approximant-on-fourier-side-remark}
  By Lemma \ref{arithmetic-functions-estimates}, we have that for any $s$:
  \begin{equation}
    \norm{L_{m, s}}_{L^\infty(\TT)} \lesssim 2^{-c_d s}
  \end{equation}
  Therefore, the $L^\infty$-norm of the Fourier transform of $\widehat{K_m} - \widehat{L_m}$ will still be $O_{n, B}(\frac{1}{(\log m)^A})$ as long as \eqref{special-prime-kernel-and-approximant-on-fourier-side} is satisfied.
\end{rmk}

\begin{proof}
  We will mimic the proof of equation (22) from \cite{wierdl1988pointwise}, but will include the argument to jusitfy the advantage of manipulating the scaling of $\varphi_s$ as much as we wish.
  Without loss of generality, assume that $m$ is larger than some large constant depending on $B$.
  Let $\alpha \in \TT$; we need to distinguish between major and minor arcs.
  \begin{enumerate}
    \item If, for some irreducible fraction $\frac{a'}{q'}$, $\alpha$ lies in the major arc around $\frac{a'}{q'}$, then from Theorem \ref{major-arc-behaviour}, we obtain:
    \begin{equation} \label{special-prime-kernel-common-thing}
      \widehat{K_m}(\alpha) = \frac{S(a', q')}{R_n \varphi_2(q'_0)} v_m \Bigl( \alpha - \frac{a'}{q'}\Bigr) + O_B(\exp(-c \sqrt{\log m})).
    \end{equation}
    For any other irreducible fraction $\frac{a}{q}$ whose denominator is bounded by $(\log m)^B$, we have from $m \gg B$ that:
    \begin{equation}
      \bigg|\alpha - \frac{a}{q} \bigg| \geq \frac{1}{2 (\log m)^{2B}} 
    \end{equation}
    so consequently $v_m(\alpha - \frac{a}{q}) \lesssim \frac{(\log m)^{2B/d}}{m}$ (by applying Lemma B.2 from \cite{krause2022discrete}).
    Applying Lemma \ref{arithmetic-functions-estimates} we get that $\frac{S(a, q)}{\varphi_2(q_0)} \lesssim q^{-c_d}$. The supports of $\varphi_{6s}(\alpha - \frac{a}{q})$ when $q \in [2^{s-1}, 2^s)$ are disjoint so in effect:
    \begin{equation} \label{approximant-common-thing}
      \bigg| \widehat{L_m'}(\alpha) - \frac{S(a', q')}{R_n \varphi_2(q'_0)} v_m \Bigl( \alpha - \frac{a'}{q'}\Bigr) \varphi_{6s} \Bigl( \alpha - \frac{a'}{q'}\Bigr) \bigg| \lesssim \frac{(\log m)^{2B/d}}{m} \sum_{s = 1}^\infty \frac{1}{2^{c_d s}}.
    \end{equation}
    Now $\varphi_{6s}(\alpha - \frac{a'}{q'})$ is $1$, as by the size of $m$ we have:
    \begin{equation}
      \bigg| \alpha - \frac{a'}{q'} \bigg| < \frac{(\log m)^B}{m} < \frac{1}{4 \cdot 2^{6 B \log_2 \log m}} \leq \frac{1}{4 \cdot 2^{6s}}
    \end{equation}
    Combining \eqref{approximant-common-thing} and \eqref{special-prime-kernel-common-thing} yields \eqref{special-prime-kernel-and-approximant-on-fourier-side} in this case.
    \item If $\alpha$ lies in minor arc, then from \eqref{minor-arc-inequality-rational-prime-version}:
    \begin{equation}
      |\widehat{K_m}(\alpha)| \lesssim_n \frac{1}{(\log m)^A};
    \end{equation}
    the same argument as we did for \eqref{approximant-common-thing} shows that $\widehat{L_m'}$ is $O \Bigl(\frac{1}{(\log m)^B} \Bigr)$. To elaborate a bit more, $|\alpha - \frac{a}{q}|$ is always at least $\frac{(\log m)^B}{m^d}$ on the minor arc, so $v_m(\alpha - \frac{a}{q}) \lesssim \frac{1}{(\log m)^{B/d}}$, which is enough to make the argument from \eqref{approximant-common-thing} work. Obviously, the difference between $\widehat{K_m}$ and $\widehat{L_m'}$ also respects the polylogarithmic savings.
  \end{enumerate}
\end{proof}

\section{Bourgain's multi-frequency result revisited} \label{section-6}

In the case when $p = 2$, we will use a famous result of Bourgain for a maximal estimate on the family $(L_{m, s})_{m \in 2^\NN}$ with fixed $s$. The original version of that result states the following:
\begin{prop} \label{multi-frequency-operator-boundedness}
  Suppose that $\Theta := \{ \theta_1, \ldots, \theta_N \}$ is $1$-separated i.e.~$|\theta_i - \theta_j| > 1$ when $i$ is different than $j$ and $\widehat{\chi_k}(x) = \mathbf{1}_{[-1/2^{k+1}, 1/2^{k+1}]}(x)$.
  Then the maximal sublinear operator:
  \begin{equation}
    \mathcal{M}_\Theta : L^2(\RR) \ni f \to \sup_{k \geq 1} \Bigg|\sum_{n = 1}^N (\Mod_{\theta_n} \chi_k) \ast f \Bigg| \in L^2(\RR)
  \end{equation}
  has norm at most $(\log N)^2$ up to absolute constant.
\end{prop}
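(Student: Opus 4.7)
The plan is to follow Bourgain's Rademacher--Menshov chaining argument. Set $T_k f := \sum_{n=1}^N (\Mod_{\theta_n} \chi_k) \ast f$. Because $\Theta$ is $1$-separated, the intervals $[\theta_n - 2^{-(k+1)}, \theta_n + 2^{-(k+1)}]$ (of length at most $1/2$ for $k \geq 1$) are pairwise disjoint; hence the Fourier multiplier symbol of $T_k$ is $\mathbbm{1}_{E_k}$, where $E_k := \bigsqcup_{n=1}^N [\theta_n - 2^{-(k+1)}, \theta_n + 2^{-(k+1)}]$. Since $E_{k+1} \subset E_k$ and $\bigcap_{k \geq 1} E_k = \Theta$ has Lebesgue measure zero, the operators $(T_k)_{k \geq 1}$ form a decreasing family of orthogonal $L^2$-projections converging strongly to zero. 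The telescoping increments $D_j := T_j - T_{j+1}$ are pairwise orthogonal with $\sum_{j \geq 1} \|D_j f\|_{L^2}^2 = \|T_1 f\|_{L^2}^2 \leq \|f\|_{L^2}^2$.

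The analytic engine is the classical Rademacher--Menshov inequality: for any finite orthogonal family $\{g_j\}_{j=1}^M \subset L^2(\RR)$,
\begin{equation*}
  \Big\| \sup_{1 \leq K \leq M} \Big| \sum_{j=1}^{K} g_j \Big| \Big\|_{L^2} \lesssim \log(1+M) \, \Big( \sum_{j=1}^M \|g_j\|_{L^2}^2 \Big)^{1/2}.
\end{equation*}
Applied to the increments $D_j f$ restricted to a window of scales, this controls the partial supremum of $T_k f$ by $\log(\text{window length}) \cdot \|f\|_{L^2}$. To promote this to $\sup_{k \geq 1} |T_k f|$ with exactly the power $(\log N)^2$, I would proceed in two stages. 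First, truncate the scale at $K_0 \asymp \log_2 N$: for $k > K_0$ the set $E_k$ has total measure at most $N \cdot 2^{-k} \ll 1$, and a Bessel-type bound combined with Plancherel controls the tail supremum by $O(\|f\|_{L^2})$. Second, on the range $1 \leq k \leq K_0$, perform a dyadic decomposition of scale into blocks $\mathcal{B}_\ell := \{k : 2^\ell \leq k < 2^{\ell+1}\}$ for $0 \leq \ell \lesssim \log\log N$; apply Rademacher--Menshov \emph{within} each block (picking up a factor of order $\ell$) and then combine the resulting square-function bounds \emph{across} blocks via Cauchy--Schwarz.

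The main obstacle is to execute the chaining so that the two logarithmic factors combine multiplicatively into $(\log N)^2$ without any further loss. The delicate step is a stopping-time selection that identifies, at each $x \in \RR$, the dominant scale in each dyadic block, thereby reducing the supremum along a single dyadic path and preserving the orthogonality needed for Rademacher--Menshov. A secondary technical point is the tail estimate for $k \gtrsim \log N$, where one combines the decay of $|E_k|$ with the $1$-separation of $\Theta$ to ensure that the fine-scale contribution is $O(\|f\|_{L^2})$ and hence absorbable into the main bound.
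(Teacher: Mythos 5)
Your opening observations are sound: the multiplier of $T_k$ is $\mathbf{1}_{E_k}$ for a nested family of disjoint unions of $N$ intervals, the increments $D_j = T_j - T_{j+1}$ are pairwise orthogonal projections, and Rademacher--Menshov is a natural tool. (Note that the paper does not prove this proposition; it cites Lemma~4.1 of \cite{bourgain1989pointwise}, so you are in effect attempting an independent proof.) However, the tail estimate is a genuine gap, and as stated it is false. You assert that $\bigl\| \sup_{k > K_0} |T_k f| \bigr\|_{L^2} = O(\|f\|_{L^2})$ for $K_0 \asymp \log_2 N$ because $|E_k| \leq N 2^{-k} \ll 1$. But smallness of $|E_k|$ only yields a pointwise bound $\|T_k f\|_{L^\infty} \leq |E_k|^{1/2}\|f\|_{L^2}$, which gives no control in $L^2(\RR)$. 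Worse, the tail has \emph{exactly} the same $N$-dependence as the full operator. Since $T_k = T_k T_{K_0}$ for $k > K_0$, set $g = T_{K_0} f$ and $\tilde g(x) = g(2^{K_0}x)$; a change of variables on the Fourier side gives $T_k g(x) = T^{\tilde\Theta}_{k - K_0}\tilde g(2^{-K_0}x)$ with $\tilde\Theta := 2^{K_0}\Theta$ (still $1$-separated, still $N$ points), and therefore
\begin{equation*}
  \frac{\bigl\| \sup_{k > K_0} |T_k g| \bigr\|_{L^2}}{\|g\|_{L^2}} = \frac{\bigl\| \sup_{k \geq 1} |T^{\tilde\Theta}_{k} \tilde g| \bigr\|_{L^2}}{\|\tilde g\|_{L^2}}.
\end{equation*}
So the tail maximal operator has the same operator norm as $\mathcal{M}_{\tilde\Theta}$; truncating at $K_0$ does not shrink the problem, it reproduces it.

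The numerology also signals a structural mismatch. Granting your (false) tail bound, Rademacher--Menshov applied to the $K_0 \asymp \log N$ increments in the main range yields a factor $\log K_0 \asymp \log\log N$, and your block decomposition into $\lesssim \log\log N$ blocks of size $2^\ell$ picks up RM factors $\ell$, which combine (even summed in $\ell$) into $O\bigl((\log\log N)^2\bigr)$. This is much smaller than $(\log N)^2$, so the RM-plus-truncation scheme cannot be the source of the two $\log N$ factors; if it worked it would be a dramatic and spurious improvement on Bourgain's lemma. In Bourgain's actual argument the scale range is not truncated at $\log N$; the two logarithms come from a chaining/metric-entropy argument over the $N$-point frequency set itself (and from the interaction between the sharp Fourier cutoff and the separation structure), not from counting dyadic scales. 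I would recommend working through the proof of Lemma 4.1 in \cite{bourgain1989pointwise}, or the exposition in \cite{krause2022discrete}, before attempting to reconstruct it.
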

\begin{rmk}
  \begin{enumerate} 
    \item It is obvious that $\mathcal{M}_\Theta$ has norm at most $2N$ (use for instance the triangle inequality together with the Hardy-Littlewood maximal function); 
    \item Applying a simple dilation argument, this theorem works also when for any distinct indices $i, j$ one has $|\theta_i - \theta_j| < t$ and supremum in $\mathcal{M}_\Theta$ varies on range $k \geq \log_2 t + 1$.
    \item From Lemma 4.4 in \cite{bourgain1989pointwise} one can justify that $\mathcal{M}_\Theta$ is $l^2(\ZZ)$-bounded operator with norm $O((\log N)^2)$. 
  \end{enumerate}
\end{rmk}
The famous paper containing the proof of that proposition is \cite{bourgain1989pointwise} (see especially Lemma 4.1.)
The new result that we endeavour to prove is as follows:
\begin{prop} \label{multi-frequency-operator-boundedness-dyadic-fractions}
  For any $s \in \ZZ^+$ and $g \in l^2(\ZZ)$, one has the inequality:
  \begin{equation}
    \norm{\sup_{m \in 2^\NN : m \geq 4^{s+4}}  \bigg| \sum_{q \in [2^{s-1}, 2^s)} \sum_{a \in [q] : a \perp q} \int_\TT v_m \Bigl(\alpha - \frac{a}{q} \Bigr) \varphi_{6s} \Bigl( \alpha - \frac{a}{q} \Bigr) \hat{f}(\alpha) e(j \alpha) d \alpha \bigg|}_{l^2_j(\ZZ)} \lesssim s^2 \norm{f}_{l^2(\ZZ)}.
  \end{equation}
\end{prop}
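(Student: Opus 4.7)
The plan is to reduce the maximal estimate to Bourgain's multi-frequency theorem (Proposition \ref{multi-frequency-operator-boundedness}) applied to the frequency set
\[
\Theta_s = \bigl\{a/q \bmod 1 : q \in [2^{s-1}, 2^s),\ a \in [q],\ (a,q) = 1\bigr\}.
\]
First I would observe that $|\Theta_s| \leq 2^{2s-1}$ and that any two distinct reduced fractions $a/q, a'/q'$ with $q, q' < 2^s$ satisfy $|a/q - a'/q'| \geq 1/(qq') > 2^{-2s}$, so $\Theta_s$ is $2^{-2s}$-separated on $\TT$. Invoking the dilation remark after Proposition \ref{multi-frequency-operator-boundedness}, the bound $(\log |\Theta_s|)^2 \lesssim s^2$ yields
\[
\Bigl\|\sup_{k \geq 2s}\ \Bigl|\sum_{\theta \in \Theta_s}(\text{Mod}_\theta \chi_k) \ast f\Bigr|\Bigr\|_{\ell^2(\ZZ)}\ \lesssim\ s^2\,\|f\|_{\ell^2(\ZZ)},
\]
with the $\chi_k$ as in Proposition \ref{multi-frequency-operator-boundedness}. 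This is the factor-of-$s^2$ estimate we hope to inherit.

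Next, I would compare the smooth multiplier $v_m(\beta)\varphi_{6s}(\beta)$, which is supported on $|\beta| \leq 2^{-6s-1}$, to a superposition of the flat cutoffs $\mathbf{1}_{|\beta| \leq r}$. Splitting the real and imaginary parts of $v_m \varphi_{6s}$ into even and odd components in $\beta$, the fundamental theorem of calculus writes each even component in Abel form
\[
(v_m \varphi_{6s})^{\mathrm{even}}(\beta) = \int_0^{2^{-6s-1}} H_m(r)\,\mathbf{1}_{|\beta|\leq r}\,dr,
\]
with $H_m(r)$ proportional to the derivative of the even part at radius $r$; odd components are handled analogously after pairing with $\mathrm{sign}(\beta)\mathbf{1}_{|\beta|\leq r}$, which Bourgain's theorem also controls via a trivial translation of $\Theta_s$ by $r/2$. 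Substituting into the multiplier definition of $T_m$ converts the supremum over $m$ into an integral against multi-frequency operators of Bourgain type, giving
\[
\bigl\|\sup_m |T_m f|\bigr\|_{\ell^2}\ \lesssim\ \Bigl(\sup_m \int_0^{2^{-6s-1}} |H_m(r)|\,dr\Bigr)\cdot s^2\,\|f\|_{\ell^2}.
\]

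The main obstacle is the uniform-in-$m$ estimate $\sup_m \int_0^{2^{-6s-1}} |H_m(r)|\,dr \lesssim 1$, i.e.~a uniform total-variation bound on $v_m \varphi_{6s}$. The crude estimate $|v_m'(\beta)| \lesssim m^d$ (from differentiating inside the integral defining $v_m$) suffices on $|\beta| \leq m^{-d}$, contributing $O(1)$ after integration, but fails on larger $|\beta|$; for the latter region I would invoke van der Corput's lemma to obtain the sharper bound $|v_m(\beta)| \lesssim (|\beta|m^d)^{-1/d}$ together with an analogous running variation estimate, then dyadically decompose $[m^{-d}, 2^{-6s-1}]$ and telescope the oscillatory contributions to a uniform $O(1)$ bound. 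The hypothesis $m \geq 4^{s+4}$ enters essentially here: it guarantees that $m^{-d}$ is well inside the support of $\varphi_{6s}$, so that both the constant regime $|\beta| \leq m^{-d}$ and the oscillatory regime $m^{-d} < |\beta| \leq 2^{-6s-1}$ lie within the support and can be treated cleanly. Combining this uniform bound with Bourgain's $s^2$ estimate finishes the proof.
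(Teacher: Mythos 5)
Your high-level strategy --- exploiting the $2^{-2s}$-separation and the cardinality bound $|\Theta_s|\leq 2^{2s}$ to invoke Bourgain's multi-frequency theorem and extract the $s^2$ factor --- is the same skeleton as the paper's. The problem lies entirely in the reduction from the smooth multiplier $v_m\varphi_{6s}$ to flat cutoffs. The uniform total-variation bound $\sup_m \int_0^{2^{-6s-1}}|H_m(r)|\,dr \;=\; \sup_m \mathrm{TV}(v_m\varphi_{6s}) \lesssim 1$, on which everything rests, is false. A single integration by parts in $v_m(\beta)=\int_0^1 e(\beta P(mu))\,du$ shows that on the oscillatory range $m^{-d}\lesssim|\beta|\lesssim 2^{-6s}$ the derivative satisfies $|v_m'(\beta)|\asymp 1/|\beta|$: the boundary term $e(\beta P(m))/(2\pi i d\beta)$ dominates and does \emph{not} inherit the $(|\beta|m^d)^{-1/d}$ decay enjoyed by $v_m$ itself. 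Consequently
\begin{equation*}
\int_{m^{-d}}^{2^{-6s}}|v_m'(\beta)|\,d\beta \;\asymp\; \log\bigl(m^d\,2^{-6s}\bigr) \;\longrightarrow\;\infty \qquad (m\to\infty),
\end{equation*}
and the ``telescoping of oscillatory contributions'' cannot rescue this, since once you take $|H_m(r)|$ the cancellation you would need is gone; van der Corput controls the size of $v_m$ but not the size of its derivative. (There is also a smaller unaddressed point: after the Abel decomposition you must control a continuous supremum $\sup_{r}|G_r|$ whereas Proposition \ref{multi-frequency-operator-boundedness} and its remark give the dyadic $\sup_k$.)

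The paper avoids the obstacle by \emph{not} decomposing $v_m\varphi_{6s}$ into a superposition over all radii. Instead it compares $v_m$ to a single sharp cutoff $\chi_{\log_2 m}$ whose radius shrinks with $m$, and estimates the $L^\infty(\TT)$ norm of the \emph{sum} over dyadic $m$ of the differences: for each $\alpha$, with $\delta$ the distance to the nearest reduced fraction, large scales contribute a geometric series via $|v_m(\delta)|\lesssim (m\delta^{1/d})^{-1}$, small scales contribute a geometric series via $|v_m(\delta)-1|\lesssim m^d\delta$, and the total is $O(1)$ pointwise in $\alpha$. Replacing the supremum over $m$ by the sum and invoking Plancherel then gives an $O(\norm{f}_{l^2(\ZZ)})$ error, after which Bourgain's theorem applies directly to the sharp-cutoff maximal function. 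In short, the cost of passing from smooth to sharp must be measured by an $\ell^1$-in-$m$ bound on the \emph{difference} $v_m-\widehat{\chi_{\mathrm{rad}(m)}}$ at a fixed frequency, not by a total-variation bound on $v_m\varphi_{6s}$ itself; the former is $O(1)$, the latter grows like $\log m$.
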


\begin{rmk}
  Applying the above proposition for $g$ which on the Fourier side is given by:
  \begin{equation}
    \widehat{g}(\alpha) = \sum_{q \in [2^{s-1}, 2^s)} \sum_{a \in [q] : a \perp q} \frac{S(a, q)}{\varphi_2(q_0)} \varphi_{6(s-1)} \Bigl(\alpha - \frac{a}{q} \Bigr)\widehat{f}(\alpha),
  \end{equation}
  together with the estimates from Lemma \ref{arithmetic-functions-estimates}, gives for any $\epsilon > 0$:
  \begin{equation} \label{multi-frequency-operator-boundedness-dyadic-fractions-2}
    \norm{\sup_{m \in 2^\NN : m \geq 4^{s+4}}  \bigg| L_{m, s} \ast f \bigg|}_{l^2(\ZZ)} \lesssim_\epsilon 2^{(\epsilon - c_d) s} \norm{f}_{l^2(\ZZ)}.
  \end{equation}
  Combining this inequality together with \eqref{special-prime-kernel-and-approximant-on-fourier-side} gives an $l^2$-maximal inequality in \eqref{lp-maximal-ergodic-inequality}.
\end{rmk}

\begin{proof}
  For any two irreducible fractions $\frac{a_1}{q_1}$ and $\frac{a_2}{q_2}$ with denominators in $[2^{s-1}, 2^s)$, the supports of $\alpha \to \varphi_{6s} \Bigl( \alpha - \frac{a_1}{q_1} \Bigr)$ and $\alpha \to \varphi_{6s} \Bigl( \alpha - \frac{a_2}{q_2} \Bigr)$ are disjoint.
  The proof consists of two steps, in the first we will try to replace $v_m$ by $\chi_{\log_2 m}$. Due to a standard Parseval's-identity argument, the difference between the expressions:
  \begin{equation}
    \norm{\sup_{m \in 2^\NN : m \geq 4^{s+4}}  \bigg| \sum_{q \in [2^{s-1}, 2^s)} \sum_{a \in [q] : a \perp q} \int_\TT v_m \Bigl(\alpha - \frac{a}{q} \Bigr) \varphi_{6s} \Bigl( \alpha - \frac{a}{q} \Bigr) \hat{f}(\alpha) e(j \alpha) d \alpha \bigg|}_{l^2_j(\ZZ)} 
  \end{equation}
  and
  \begin{equation}
    \norm{\sup_{m \in 2^\NN : m \geq 4^{s+4}}  \bigg| \sum_{q \in [2^{s-1}, 2^s)} \sum_{a \in [q] : a \perp q} \int_\TT \chi_{\log_2 m} \Bigl(\alpha - \frac{a}{q} \Bigr) \varphi_{6s} \Bigl( \alpha - \frac{a}{q} \Bigr) \hat{f}(\alpha) e(j \alpha) d \alpha \bigg|}_{l^2_j(\ZZ)} 
  \end{equation}
  is bounded by:
  \begin{equation} \label{multi-frequency-operator-boundedness-dyadic-fractions-difference}
    \norm{\sum_{m \in 2^\NN : m \geq 4^{s+4}} \sum_{q \in [2^{s-1}, 2^s)} \sum_{a \in [q] : a \perp q} \Bigg| \Bigl( v_m \Bigl(\alpha - \frac{a}{q} \Bigr) - \chi_{\log_2 m} \Bigl(\alpha - \frac{a}{q} \Bigr) \Bigr) \varphi_{6s} \Bigl( \alpha - \frac{a}{q} \Bigr)\Bigg| }_{l^\infty(\TT)}.
  \end{equation}
  Take some $\alpha' \in \TT$ and suppose the nearest fraction to it is $\frac{a'}{q'}$. Suppose that distance between them is $\delta$:
  \begin{enumerate}
    \item Due to inequality $v_m(\beta) \lesssim \frac{1}{m |\beta|^{1/d}}$, the contribution from scales $m$ with $\frac{1}{m} < \delta^{1/d}$ is:
    \begin{equation}
      \sum_{k \geq 0} \frac{4}{2^k M \delta^{1/d}} = O \Bigl(\frac{1}{M \delta^{1/d}} \Bigr) = O(1)
    \end{equation}
    where $M \in 2^\NN$ is the smallest number satisfying $\frac{1}{M} < \delta^{1/d}$. We need not even consider $\chi_{\log_2 m}$ as they all vanish at $\alpha' - \frac{a'}{q'}$ as long as $m \geq M$.
    \item Around zero, $v_m$ is estimated via $|v_m(\beta) - 1| \leq O(m^d |\beta|)$ (the inequality follows straight from definition of $v_m$). The geometric series kicks in again from justification that the contribution from small $m$ is also $O(1)$.
  \end{enumerate}
  Consequently, \eqref{multi-frequency-operator-boundedness-dyadic-fractions-difference} is bounded by $O(\norm{f}_{l^2(\ZZ)})$, whereas the estimate
  \begin{equation}
    \norm{\sup_{m \in 2^\NN : m \geq 4^{s+4}}  \bigg| \sum_{q \in [2^{s-1}, 2^s)} \sum_{a \in [q] : a \perp q} \int_\TT \chi_{\log_2 m} \Bigl(\alpha - \frac{a}{q} \Bigr) \varphi_{6s} \Bigl( \alpha - \frac{a}{q} \Bigr) \hat{f}(\alpha) e(j \alpha) d \alpha \bigg|}_{l^2_j(\ZZ)} \lesssim s^2 \norm{f}_{l^2(\ZZ)}
  \end{equation}
  follows from the remark after Proposition \ref{multi-frequency-operator-boundedness}.
\end{proof}

\subsection{The High-Low method and superorthogonality}

In this final subsection, we will finish the proof of \eqref{lp-maximal-ergodic-inequality}, using the High-Low method. The approach is almost identical to that presented in Chapter 7 of \cite{krause2022discrete}, still running through entire argument will be appropriate to justify why we have not decided to select any simpler method. This method originates from work of Bourgain \cite{bourgain1989pointwise}.
For any $S > 1$, we will construct two operators $\mathcal{L}_S$ and $\mathcal{H}_S$ acting on functions $\ZZ \to \CC$ where:
\begin{equation} \label{high-low-method-conditions}
  \begin{gathered}
    \sup_{m \in 2^\NN} |K_m \ast f| \leq \mathcal{L}_S f + \mathcal{H}_S f \\
    \forall_{p > 1} \norm{\mathcal{L}_S f}_{l^p(\ZZ)} \lesssim_p S^2 \norm{f}_{l^p(\ZZ)} \\
    \norm{\mathcal{H}_S f}_{l^2(\ZZ)} \lesssim 2^{-\delta S} \norm{f}_{l^2(\ZZ)} \\
  \end{gathered}
\end{equation}
for an appropriate constant $\delta > 0$.

\begin{prop} \label{high-low-method-guarantees-maximal-lp-inequality}
  The inequalities from \eqref{high-low-method-conditions} are sufficient to establish the $l^p$-boundedness ($p \in (1, 2)$) of the operator:
  \begin{equation}
    f \to \sup_{m \in 2^{\NN}} |K_m \ast f|
  \end{equation}
\end{prop}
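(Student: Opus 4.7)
The plan is to combine a restricted weak-type bound obtained from the high-low decomposition \eqref{high-low-method-conditions} with real interpolation against a trivial $l^\infty$-bound. Write $\mathcal{M} f := \sup_{m \in 2^{\NN}} |K_m \ast f|$ for the maximal operator in question.

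First, I would observe that $\mathcal{M}$ is bounded on $l^\infty(\ZZ)$ with an absolute constant. Each kernel $K_m$ from \eqref{kernel-of-ergodic-average} is nonnegative and satisfies
\[
\|K_m\|_{l^1(\ZZ)} = \frac{1}{m} \sum_{p \in \PP_n \cap [m]} \log p = O_n(1),
\]
which is the $\alpha = 0, q = 1, a = 0$ specialization of Theorem \ref{major-arc-behaviour} (a quantitative prime number theorem for primes of the form $x^2 + ny^2$). Hence $\mathcal{M} f(x) \leq C_n \|f\|_{l^\infty(\ZZ)}$ pointwise.

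Next I would establish a restricted weak-type $(p_0 + \epsilon, p_0 + \epsilon)$ estimate, valid for any $p_0 \in (1, p)$ and any small $\epsilon > 0$: for every finite set $E \subset \ZZ$ and every $\lambda > 0$,
\[
|\{x \in \ZZ : \mathcal{M} \mathbbm{1}_E(x) > \lambda\}| \lesssim_{p_0, \epsilon} \frac{|E|}{\lambda^{p_0 + \epsilon}}.
\]
For $\lambda$ larger than the $l^\infty$-constant of the first step this set is empty. For the remaining small $\lambda$, I would apply the pointwise decomposition of \eqref{high-low-method-conditions} together with Chebyshev's inequality to obtain, for any parameter $S > 0$,
\[
|\{\mathcal{M} \mathbbm{1}_E > 2\lambda\}| \leq |\{\mathcal{L}_S \mathbbm{1}_E > \lambda\}| + |\{\mathcal{H}_S \mathbbm{1}_E > \lambda\}| \lesssim_{p_0} \frac{S^{2 p_0} |E|}{\lambda^{p_0}} + \frac{2^{-2 \delta S} |E|}{\lambda^2}.
\]
Balancing the two contributions by choosing $S \sim \frac{2 - p_0}{2 \delta} \log(1/\lambda)$ (so that $2^{-2 \delta S} \sim \lambda^{2 - p_0}$) yields the bound $\lesssim_{p_0} (1 + \log(1/\lambda))^{2 p_0} |E|/\lambda^{p_0}$, after which the logarithmic factor is absorbed into $\lambda^{-\epsilon}$.

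Finally, picking $p_0 \in (1, p)$ and $\epsilon > 0$ with $p_0 + \epsilon < p$, Marcinkiewicz real interpolation for restricted weak-type sublinear operators, applied between the $(p_0 + \epsilon, p_0 + \epsilon)$ bound above and the strong-type $(\infty, \infty)$ bound of the first step, gives that $\mathcal{M}$ is of strong-type $(p, p)$, which is exactly \eqref{lp-maximal-ergodic-inequality}. The main technical point is the logarithmic factor appearing when optimizing $S$ in terms of $\lambda$: it prevents a clean restricted weak-type estimate at the critical exponent $p_0$ itself, but is painlessly absorbed by an infinitesimal upward shift of exponent, which is available precisely because $\mathcal{L}_S$ is bounded on $l^q(\ZZ)$ for \emph{every} $q > 1$ rather than at a single exponent.
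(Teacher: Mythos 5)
Your proof is correct and follows essentially the same strategy as the paper: apply Chebyshev to the high-low decomposition, balance $S$ against $\log(1/\lambda)$ so that the $\mathcal{H}_S$ contribution matches the $\mathcal{L}_S$ contribution, and then interpolate. The only cosmetic difference is where the "slack" exponent is introduced. The paper uses Chebyshev with the $l^r$-bound on $\mathcal{L}_S$ at $r = \frac{1+p}{2} < p$, so the logarithmic factor $(\log(1/\lambda))^{2r}$ is absorbed directly by $\lambda^{r-p}$, yielding a restricted weak-type $(p,p)$ bound which is then fed into Marcinkiewicz interpolation. You instead fix $p_0 < p$, get restricted weak-type $(p_0+\epsilon, p_0+\epsilon)$, and interpolate against the trivial $l^\infty$-endpoint (which you rightly note follows from $\|K_m\|_{l^1} = O_n(1)$, the $\alpha = 0$ case of Theorem~\ref{major-arc-behaviour}). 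Both are standard real-interpolation routes, and both exploit the same crucial feature that $\mathcal{L}_S$ is $l^q$-bounded for every $q > 1$ with only polynomial loss in $S$.

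One small thing to tidy if you write this up in full: your choice $S \sim \frac{2-p_0}{2\delta}\log(1/\lambda)$ becomes nonpositive for $\lambda \geq 1$, so for $\lambda$ in the intermediate range between $1$ and the $l^\infty$-constant you should either take $S = O(1)$ or (as the paper does) fall back on the $l^2$-bound directly, noting that $|E|/\lambda^2 \lesssim |E|/\lambda^{p_0+\epsilon}$ when $\lambda$ is bounded away from zero. This is a routine patch and does not affect the substance of the argument.
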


\begin{proof}
  Due to the Marcinkiewicz interpolation theorem (see Theorem 1.8 from \cite{krause2022discrete}), it is enough to check:
  \begin{equation}
    \bigg| \Bigl\{ \sup_{m \in 2^{\NN}} |K_m \ast \mathbbm{1}_E| \geq \lambda \Bigr\} \bigg| \lesssim_p \frac{|E|}{\lambda^p}.
  \end{equation}
  Whenever $\lambda \geq \frac{1}{100}$, we may use the already-established $l^2$-boundedness:
  \begin{equation}
    \bigg| \Bigl\{ \sup_{m \in 2^{\NN}} |K_m \ast \mathbbm{1}_E| \geq \lambda \Bigr\} \bigg| \lesssim \frac{|E|}{\lambda^2} \lesssim \frac{|E|}{\lambda^p}.
  \end{equation}
  In the other case, we need to wrestle a bit more. If we set $r = \frac{1 + p}{2}$, then:
  \begin{equation}
    \begin{aligned}
      \bigg| \Bigl\{ \sup_{m \in 2^{\NN}} |K_m \ast \mathbbm{1}_E| \geq \lambda \Bigr\} \bigg| &\leq \bigg| \Bigl\{ \mathcal{L}_S \mathbbm{1}_E \geq \frac{\lambda}{2} \Bigr\} \bigg| + \bigg| \Bigl\{ \mathcal{H}_S \mathbbm{1}_E \geq \frac{\lambda}{2} \Bigr\} \bigg| \\
      &\lesssim_p \Bigl( \frac{2}{\lambda} \Bigr)^r S^{2r} |E| + \Bigl( \frac{2}{\lambda} \Bigr)^2 2^{-2 \delta S} |E|. \\
    \end{aligned}
  \end{equation}
  Plugging $S = \frac{(2 - p) \log \frac{1}{\lambda}}{2 \delta}$, we bound both expressions by:
  \begin{eqnarray}
    \Bigl( \frac{2}{\lambda} \Bigr)^2 2^{-2 \delta S} &= \frac{4}{\lambda^p} \\
    \Bigl( \frac{2}{\lambda} \Bigr)^r S^{2r} &= \Bigl( \frac{2}{\lambda} \Bigr)^r \frac{(2 - p)^{2r} \log^{2r} \frac{1}{\lambda}}{(2 \delta)^{2r}} = O_{p, \delta}(\frac{1}{\lambda^p}),
  \end{eqnarray}
  as the logarithm grows slower than any power function.
\end{proof}

The key fact we will need is that the operators $K_m$ satisfy a \textit{partial $l^p$ maximal inequality}, similar to those coming from polynomial kernels:
\begin{lem} \label{lp-partial-maximal-inequality}
  For each $p > 1$:
  \begin{equation}
    \norm{\sup_{m \in 2^\NN : J \leq \log m < 2J} |K_m \ast f| }_{l^p(\ZZ)} \lesssim_p \log J \norm{f}_{l^p(\ZZ)}.
  \end{equation}
\end{lem}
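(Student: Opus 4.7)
My plan is to use the Fourier approximant $L_m'$ from Section \ref{section-5} together with the multi-frequency bounds, exploiting the dyadic structure in the denominators $q$. While there are $O(J)$ scales $m$ in the range $\log m \in [J, 2J)$, the constraint $q \le (\log m)^B$ produces only $O(\log J)$ dyadic ranges of $q$, and it is precisely this count that yields the logarithmic factor.

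I would first split $K_m = L_m' + (K_m - L_m')$. By \eqref{special-prime-kernel-and-approximant-on-fourier-side} and Remark \ref{special-prime-kernel-and-approximant-on-fourier-side-remark}, $\|\widehat{K_m - L_m'}\|_{L^\infty(\TT)} \lesssim_{n,B} (\log m)^{-A} \lesssim J^{-A}$ for any $A$ of my choosing. On $\ell^2$, Plancherel and a square-function bound give
\[
\Bigl\|\sup_m |(K_m - L_m') \ast f|\Bigr\|_{\ell^2(\ZZ)} \le \Bigl(\sum_{\log m \in [J,2J)} \|\widehat{K_m - L_m'}\|_\infty^2\Bigr)^{1/2}\|f\|_{\ell^2(\ZZ)} \lesssim J^{1/2 - A}\|f\|_{\ell^2(\ZZ)},
\]
which is negligible once $A$ is chosen large. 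The corresponding $\ell^p$ statement follows by interpolating this $\ell^2$ savings with the trivial $\ell^p$ bound obtained from $\|K_m\|_{\ell^1(\ZZ)} \lesssim_n 1$ (and an analogous bound for $L_m'$), which loses only a polynomial factor in $J$ that is absorbed by $J^{-A}$.

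For the main term, decompose $L_m' = \sum_{s \le S} L_{m,s}$ with $S = O(\log J)$. Proposition \ref{multi-frequency-operator-boundedness-dyadic-fractions} combined with Lemma \ref{arithmetic-functions-estimates} yields the $\ell^2$ bound $\|\sup_m |L_{m,s} \ast f|\|_{\ell^2(\ZZ)} \lesssim s^2 \, 2^{-c_d s}\|f\|_{\ell^2(\ZZ)}$, which sums to $O(1)$ on $\ell^2$. For $p \neq 2$, a uniform-in-$m$ $\ell^p$ maximal bound for each fixed $s$, with at most polynomial $s$-dependence, follows by controlling the $\ell^1$ norm of the convolution kernel $L_{m,s}$ using the decay of $v_m$ and $\varphi_{6s}$ in \eqref{dyadic-part-kernel-approximant} together with the size estimates on $S(a,q)/\varphi_2(q_0)$ from Lemma \ref{arithmetic-functions-estimates}. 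Summing the resulting per-$s$ bounds over the $S \sim \log J$ dyadic levels produces the claimed factor of $\log J$.

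The main obstacle is making the passage from the $\ell^2$ multi-frequency estimate to $\ell^p$ for each $L_{m,s}$ in a way that preserves the summability structure in $s$. The cleanest resolution is to invoke Ionescu-Wainger sampling theory (used extensively in Section \ref{section-7}), which supplies uniform $\ell^p$ maximal bounds for Fourier multipliers supported on $1/2^{6s}$-neighborhoods of rationals with denominator $\lesssim 2^s$. Alternatively, one can argue directly by interpolating the $\ell^2$ gain $s^2 \, 2^{-c_d s}$ against a trivial $\ell^p$ bound of polynomial order in $s$ and checking that the resulting $s$-tail still sums to $O(1)$ per dyadic level, so that the only loss is the $\log J$ count of levels.
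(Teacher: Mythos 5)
Your proposal takes a genuinely different route from the paper. The paper does not prove Lemma \ref{lp-partial-maximal-inequality} via the Fourier approximant $L_m'$ at all; instead it reduces it, by a telescoping (superorthogonality) argument ``which does not exploit the behaviour of $K_m$,'' to the $\ell^2$ estimate of Lemma \ref{l2-boundedness-of-telescoping-operator} on products $(K_{m_1}-K_{m_0})\ast \prod_{i\ge 2}(K_{m_i}\ast\mathbbm{1}_{F_i})$. That is Bourgain's method, and it is specifically designed to extract the $\log J$ factor on $\ell^p$ for all $p>1$ from a purely $L^2$ statement, bypassing the need for any per-$s$ $\ell^p$ multiplier bound.

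Your plan has a genuine gap, precisely at the point you flag as ``the main obstacle.'' The claimed per-$s$ $\ell^p$ maximal bound for $L_{m,s}$ does not hold with polynomial $s$-dependence, and neither of your two proposed fixes closes it. First, the $\ell^1$-kernel route: the kernel of $L_{m,s}$ factors as a product of $(v_m\varphi_{6s})^\vee$ (with $\ell^1$ norm $O(1)$) and the trigonometric polynomial $\sum_{q\sim 2^s}\sum_{(a,q)=1}\frac{S(a,q)}{R_n\varphi_2(q_0)}e(\tfrac{a}{q}\cdot)$. The latter has $\sim 4^s$ terms with coefficients only $\lesssim 2^{-c_ds}$, so the naive $\ell^1$ bound on $L_{m,s}$ is $\gtrsim 2^{(2-c_d)s}$, exponential and not polynomial in $s$. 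Interpolating the $\ell^2$ gain $s^2 2^{-c_d s}$ against a bound of size $2^{(2-c_d)s}$ at an endpoint $p_0$ close to $1$ gives exponential \emph{growth} in $s$ once $p$ falls below a threshold $p^\ast$ determined by $c_d$; since $c_d$ is tiny ($\sim 1/d$), $p^\ast$ is close to $2$, so the range $1<p\le p^\ast$ is left uncontrolled and summing over $s\le S\sim\log J$ produces a power of $J$, not $\log J$. Second, the Ionescu--Wainger route: Property \ref{ionescu-wainger-theory-recap-b} requires the multiplier $\mathbf{m}$ to be supported in an interval of width $\exp(-N^\rho)$, i.e.\ doubly exponentially small in $s$ for $N\sim 2^{s}$, whereas $\varphi_{6s}$ has support of width $2^{-6s}$ which is only singly exponentially small. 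So \eqref{ionescu-wainger-inequality} does not apply to $L_{m,s}$ as defined. To use Ionescu--Wainger you would have to replace $\varphi_{6s}$ by a mollifier of width $\approx 2^{-2^{\rho s}}$ and control the resulting error against $K_m$, which is exactly the machinery of Section \ref{section-7}---considerably heavier than what Lemma \ref{lp-partial-maximal-inequality} is meant to require.

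A further sign that the decomposition is not doing what you want: on $\ell^2$ the per-$s$ bound $s^2 2^{-c_d s}$ is summable, so it yields $O(1)$ with no $\log J$ loss at all, and on $\ell^p$ for $p$ near $1$ it would produce a power of $J$; the factor $\log J$ does not emerge from this decomposition for a stable range of $p$. In the paper, $\log J$ arises combinatorially from the telescoping/interpolation step, not from counting dyadic levels $s$.
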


A telescoping argument which does not exploit the behaviour of $K_m$ shows that above lemma is satisfied as long as we have: 
\begin{lem} \label{l2-boundedness-of-telescoping-operator}
  Suppose that $(m_i)_{i = 0}^r$ is an increasing sequence of powers of $2$ with all elements inside $[\exp(J)$, $\exp(2J)]$. Assume also that for each $i \in \{0, \ldots, r-1 \}$ one has that $\frac{m_{i+1}}{m_i} \geq J^{C_r}$.
  Then for any set of integers $F \subset \ZZ$ together with pairwise disjoint subsets $F_2, \ldots, F_r \subset F$ we have:
  \begin{equation}
    \norm{(K_{m_1} - K_{m_0}) \ast \Bigl( \prod_{i = 2}^r K_{m_i} \ast \mathbbm{1}_{F_i} \Bigr)}_{l^2(\ZZ)} \lesssim J^{-r} |F|^{1/2}.
  \end{equation}
\end{lem}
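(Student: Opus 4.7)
The plan is to reduce the estimate to a Fourier multiplier bound on $\widehat{K_{m_1}} - \widehat{K_{m_0}}$, and then to extract the required gain $J^{-r}$ from the large scale separation $m_{i+1}/m_i \geq J^{C_r}$ using the Fourier approximants $L_m$ from \eqref{approximant-definition}. The pairwise disjointness of the $F_i$ enters only through a crude bound on the $l^2$-norm of the pointwise product $G := \prod_{i=2}^r K_{m_i} \ast \mathbbm{1}_{F_i}$.

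For the $l^2$-bound on $G$, note that $\|K_m\|_{l^1} \lesssim 1$ (by the prime number theorem applied to $\PP_n$), so $\|K_{m_i} \ast \mathbbm{1}_{F_i}\|_{l^\infty} \lesssim 1$ and $\|K_{m_i} \ast \mathbbm{1}_{F_i}\|_{l^2} \lesssim |F_i|^{1/2}$. Placing all but one factor in $l^\infty$ and the remaining one in $l^2$ yields $\|G\|_{l^2} \lesssim \max_i |F_i|^{1/2} \leq |F|^{1/2}$. Then Plancherel gives
\[
  \bigl\|(K_{m_1} - K_{m_0}) \ast G\bigr\|_{l^2} \leq \bigl\|\widehat{K_{m_1}} - \widehat{K_{m_0}}\bigr\|_{L^\infty(\TT)} \cdot \|G\|_{l^2},
\]
reducing the problem to showing $\|\widehat{K_{m_1}} - \widehat{K_{m_0}}\|_{L^\infty(\TT)} \lesssim J^{-r}$.

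Using \eqref{special-prime-kernel-and-approximant-on-fourier-side} and Remark \ref{special-prime-kernel-and-approximant-on-fourier-side-remark} with $A \gg r$, I can replace $\widehat{K_{m_i}}$ by $\widehat{L_{m_i}}$ at a cost of $J^{-A}$, so it suffices to bound $\|\widehat{L_{m_1}} - \widehat{L_{m_0}}\|_{L^\infty(\TT)}$. I then split this difference dyadically in the denominator of the approximating rational into a \emph{high-denominator piece} (dyadic scales with $2^s > \sqrt{m_0}/16$, present only in $L_{m_1}$) and a \emph{common piece} (scales with $2^s \leq \sqrt{m_0}/16$). The high-denominator piece is superpolynomially small in $J$ because $|S(a,q)|/\varphi_2(q_0) \lesssim q^{\epsilon-c_d}$ by Lemma \ref{arithmetic-functions-estimates} and $q \geq \sqrt{m_0}/16 \geq e^{J/3}$, so the geometric sum $\sum_{2^s > \sqrt{m_0}/16} 2^{s(\epsilon-c_d)}$ is of order $e^{-\Omega(J)}$.

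The main obstacle will be the common piece, where one must bound $(v_{m_1} - v_{m_0})(\beta)\varphi_{6s}(\beta)$ uniformly in $\beta$ for each $q \leq \sqrt{m_0}/16$. I would handle this by splitting the $\beta$-range inside the support of $\varphi_{6s}$ into two regimes: for $|\beta|$ large compared to $m_0^{-d}$, the stationary-phase bound $|v_m(\beta)| \lesssim (m|\beta|^{1/d})^{-1}$ makes both $v_{m_i}(\beta)$ individually small and yields $|(v_{m_1} - v_{m_0})(\beta)| \lesssim (m_0|\beta|^{1/d})^{-1}$; for $|\beta|$ small, the identity
\[
  v_{m_1}(\beta) - v_{m_0}(\beta) = \frac{1}{m_1}\int_{m_0}^{m_1} e(\beta P(t))\,dt - \frac{m_1 - m_0}{m_1}\, v_{m_0}(\beta),
\]
combined with a van der Corput-type estimate on the truncated oscillatory integral and the rapid growth $m_1/m_0 \geq J^{C_r}$, yields a gain of a negative power of $J$ proportional to $C_r$. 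Weighting by $|S(a,q)|/\varphi_2(q_0) \lesssim q^{\epsilon-c_d}$ and summing dyadically in $s$ produces a bound $\|\widehat{L_{m_1}} - \widehat{L_{m_0}}\|_{L^\infty(\TT)} \lesssim J^{-\eta C_r}$ for some absolute $\eta = \eta(d) > 0$, and choosing $C_r$ large enough that $\eta C_r \geq r$ finishes the proof.
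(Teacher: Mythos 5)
Your proposed reduction is incorrect at its central step: the pointwise bound $\norm{\widehat{K_{m_1}} - \widehat{K_{m_0}}}_{L^\infty(\TT)} \lesssim J^{-r}$ is false, and in fact this quantity is bounded below by an absolute constant. To see this, pick $\alpha_0 = c_0 m_0^{-d}$ for a small fixed $c_0 > 0$. By Theorem \ref{major-arc-behaviour} (applied at the arc around $a/q = 1$), one has
\begin{equation*}
\widehat{K_{m_0}}(\alpha_0) = \frac{S(1,1)}{R_n \varphi_2(n_0)}\, v_{m_0}(\alpha_0) + O_B\bigl(e^{-c\sqrt{J}}\bigr),
\end{equation*}
and $v_{m_0}(\alpha_0) = \int_0^1 e\bigl(\alpha_0 P(m_0 u)\bigr)\,du$ is close to $1$ because the phase is $O(c_0)$ uniformly on $[0,1]$; the prefactor $S(1,1)/(R_n\varphi_2(n_0))$ is a fixed positive constant depending only on $n$. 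On the other hand $\widehat{K_{m_1}}(\alpha_0)$ is small: if $\alpha_0$ lies in the $m_1$-major arc, then $|v_{m_1}(\alpha_0)| \lesssim (m_1 |\alpha_0|^{1/d})^{-1} \lesssim m_0/m_1 \leq J^{-C_r}$ by Lemma B.2 of \cite{krause2022discrete}, and if $\alpha_0$ lies in the $m_1$-minor arc then $|\widehat{K_{m_1}}(\alpha_0)| \lesssim_n (\log m_1)^{-A}$ by \eqref{minor-arc-inequality-rational-prime-version}. Either way $|\widehat{K_{m_1}}(\alpha_0) - \widehat{K_{m_0}}(\alpha_0)| \gtrsim_n 1$, independent of $J$. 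Your own identity $v_{m_1}(\beta) - v_{m_0}(\beta) = \frac{1}{m_1}\int_{m_0}^{m_1} e(\beta P(t))\,dt - \frac{m_1 - m_0}{m_1} v_{m_0}(\beta)$ already exposes the obstruction: at $\beta \sim m_0^{-d}$ the second term is $\approx v_{m_0}(\beta) \sim 1$, and the ratio $m_1/m_0 \geq J^{C_r}$ makes the prefactor $\frac{m_1-m_0}{m_1}$ closer to $1$, not smaller. No van der Corput estimate on the first term can undo this.

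The reason the crude reduction fails is that it throws away the only structure that makes the lemma true: the Fourier support of the product. Your bound $\norm{G}_{l^2} \lesssim |F|^{1/2}$ (one factor in $l^2$, the rest in $l^\infty$) is fine, but $G = \prod_{i=2}^r K_{m_i} \ast \mathbbm{1}_{F_i}$ is then treated as a generic $l^2$ function, at which point only an $L^\infty$-bound on the multiplier over the whole torus would suffice, and that bound does not hold. The paper instead first swaps each $K_{m_i}$ ($i \geq 2$) for the truncated approximant $\Omega_{m_i}$ of \eqref{telescoping-operator-replacement}, whose Fourier transform lives in narrow arcs around rationals with denominator at most $J^{c(i)}$; the Fourier transform of the pointwise product $\prod_{i=2}^r \Omega_{m_i} \ast \mathbbm{1}_{F_i}$ then lives in the sumset $\Gamma$ of such arcs, each of width $\lesssim J^{A_0}/m_2^d$. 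That width is far inside $m_0^{-d}$, so on $\Gamma$ one has $|\widehat{K_{m_1}}(\alpha) - \widehat{K_{m_0}}(\alpha)| \lesssim m_1^d|\alpha - a/q| + e^{-c\sqrt{J}} \lesssim J^{A_0}(m_1/m_2)^d \leq J^{A_0 - dC_r}$, which can be made as small as desired by taking $C_r$ large. The decisive ingredient you skipped is precisely this replacement-plus-support-localization, and it cannot be substituted by any $L^\infty(\TT)$ estimate on $\widehat{K_{m_1}} - \widehat{K_{m_0}}$ alone.
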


\begin{proof}
  The proof was given for polynomial values in \cite{krause2022discrete}, we just want to check that in our setting everything goes more or less the same:
  \begin{enumerate}
    \item We select parameters $A_0, (\mathbf{c}(i))_{i = 1}^r$ in the same way, here we also insist that the constant $A$ from Theorem \ref{minor-arc-inequality} is so large that $A \gg A_0$;
    \item In our case, the $(\Omega_{m_i})_{i = 0}^r$ operators are defined on the Fourier side as:
    \begin{equation} \label{telescoping-operator-replacement}
      \widehat{\Omega_{m_i}}(\alpha) = \sum_{q \leq J^{c(i)}} \sum_{a \in [q] : a \perp q} \frac{S(a, q)}{R_n \varphi_2(q_0)} v_{m_i} \Bigl(\alpha - \frac{a}{q} \Bigr) \varphi \Biggl( \frac{J^{A_0}}{3 m_i} \Bigl( \alpha - \frac{a}{q} \Bigr) \Biggr);
    \end{equation}
    \item The inequalities:
    \begin{equation} \label{telescoping-operator-replacement-2}
      \norm{\Omega_{m_i}}_{L^\infty(\TT)} \lesssim J^{2 c(i)} 
    \end{equation}
    and
    \begin{equation} \label{telescoping-operator-replacement-3}
      \norm{\widehat{K_{m_i}} - \widehat{\Omega_{m_i}}}_{L^\infty(\TT)} \lesssim J^{-c_d c(i)} 
    \end{equation}
    are still satisfied. In order to justify \eqref{telescoping-operator-replacement-2}, one sees that number of terms in the double sum in \eqref{telescoping-operator-replacement} is at most $J^{2 c(i)}$ and all of them have $l^1(\ZZ)$-norm on physical space equal $O(1)$.
    Meanwhile, for \eqref{telescoping-operator-replacement-3} we first notice that from Remark \ref{special-prime-kernel-and-approximant-on-fourier-side-remark}:
    \begin{equation}
      \norm{\widehat{K_{m_i}} - \widehat{L_{m_i}}}_{L^\infty(\TT)} \lesssim J^{-A} \lesssim J^{-c_d c(i)}.
    \end{equation}
    We now compare the $\widehat{L_{m_i}}$ with the $\widehat{\Omega_{m_i}}$:
    \begin{equation}
      \begin{aligned}
        \sum_{s: 2^s \leq J^{c(i)}} & \sum_{q \in [2^{s-1}, 2^s), a \in [q] : a \perp q} \frac{S(a, q)}{R_n \varphi_2(q_0)} v_{m_i} \Bigl(\alpha - \frac{a}{q} \Bigr) \Bigl( \varphi_{6s}(\alpha - \frac{a}{q}) - \varphi ( \frac{J^{A_0}}{3 m_i} (\alpha - \frac{a}{q}) ) \Bigr) \\
        \qquad \qquad &+ \sum_{s: J^{c(i)} \leq 2^s \leq m_i^{1/2}/16} \sum_{q \in [2^{s-1}, 2^s), a \in [q] : a \perp q}  \frac{S(a, q)}{R_n \varphi_2(q_0)} v_{m_i} \Bigl(\alpha - \frac{a}{q} \Bigr) \varphi_{6s}(\alpha - \frac{a}{q})      
      \end{aligned}
    \end{equation}
    By Lemma \ref{arithmetic-functions-estimates}, the second double sum is uniformly bounded by $\sum_{s : J^{c(i)} \leq 2^s \leq m_i^{1/2}/16} 2^{- c_d s}$, which is clearly affordable in \eqref{telescoping-operator-replacement-3}. For the first double sum we proceed as follows: terms within a single inner sum have disjoint supports, and whenever $\alpha$ lies in the support of the summand in the first sum accompanying a fraction $\frac{a}{q}$ with $q \in [2^{s-1}, 2^s)$, then:
    \begin{equation}
      |\alpha - \frac{a}{q}| \gtrsim \frac{m_i}{J^{A_0}}.
    \end{equation} 
    Using Lemma B.2. from \cite{krause2022discrete}, the oscillatory integral $v_{m_i}(\alpha - \frac{a}{q})$ is then $O(J^{-A_0/d}) = O(J^{-c_d c(i)})$ as we wanted.
    By arguing as in the Section 7 of \cite{krause2022discrete}, \eqref{telescoping-operator-replacement-2} and \eqref{telescoping-operator-replacement-3} allow us to reduce proof of \ref{l2-boundedness-of-telescoping-operator} to:
    \begin{equation} \label{telescoping-operator-replacement-4}
      \norm{(K_{m_1} - K_{m_0}) \ast \Bigl( \prod_{i = 2}^r \Omega_{m_i} \ast \mathbbm{1}_{F_i} \Bigr)}_{l^2(\ZZ)} \lesssim J^{-r} |F|^{1/2}.
    \end{equation}
    \item The advantage of the manoeuvre from the previous point is that in frequency space, $\prod_{i = 2}^r \Omega_{m_i} \ast \mathbbm{1}_{F_i}$ is supported on:
    \begin{equation}
      \Gamma = \bigcup_{\substack{a, q: a \leq q \leq J^{r c(r)} \\ a \perp q}} \Bigl\{\alpha : \bigg| \alpha - \frac{a}{q} \bigg| \lesssim \frac{J^{A_0}}{m_2^d} \Bigr\}
    \end{equation}
    On this set $K_{m_1}$ and $K_{m_0}$ are almost identical: from Theorem \ref{major-arc-behaviour} we express:
    \begin{equation}
      \widehat{K_{m_i}}(\alpha) = \frac{S(a, q)}{R_n \varphi_2(q_0)} v_{m_i}(\alpha - \frac{a}{q}) + O(\exp(-c \sqrt{J}))
    \end{equation}
    for $i = \{0, 1 \}$. The difference $K_{m_1} - K_{m_0}$ on the Fourier side can be absolutely bounded by:
    \begin{equation}
      \begin{aligned}
      & \qquad \frac{S(a, q)}{R_n \varphi_2(q_0)} \bigg| v_{m_1}(\alpha - \frac{a}{q}) - v_{m_0}(\alpha - \frac{a}{q}) \bigg| + O(\exp(-c \sqrt{J})) \\
      &\leq m_1^d |\alpha - \frac{a}{q}| + O(\exp(- c \sqrt{J})) \lesssim J^{A_0} \Bigl( \frac{m_1}{m_2} \Bigr)^d \leq J^{A_0 - dC_r} \leq J^{-A_0} 
      \end{aligned}
    \end{equation}
    due to Lemma B.2. from \cite{krause2022discrete}. On top of that, one has the obvious $l^1(\ZZ)$-control of $\widehat{\Omega_{m_i}}$ when $i \geq 2$:
    \begin{equation} \label{telescoping-operator-replacement-5}
      \norm{\Omega_{m_i}}_{l^1(\ZZ)} \lesssim J^{2 c(i)}
    \end{equation}
    so in consequence:
    \begin{equation} \label{telescoping-operator-replacement-6}
      \norm{\Omega_{m_i} \ast \mathbbm{1}_{F_i}}_{l^\infty(\ZZ)} \lesssim J^{2 c(i)}.
    \end{equation}
    Now we are ready to show \eqref{telescoping-operator-replacement-4}:
    \begin{equation}
      \begin{aligned}
        \norm{(K_{m_1} - K_{m_0}) \ast \Bigl( \prod_{i = 2}^r \Omega_{m_i} \ast \mathbbm{1}_{F_i} \Bigr)}_{l^2(\ZZ)} &= \norm{(\widehat{K_{m_1}} - \widehat{K_{m_0}}) \mathcal{F}_\ZZ \Bigl( \prod_{i = 2}^r \Omega_{m_i} \ast \mathbbm{1}_{F_i} \Bigr)}_{L^2(\TT)} \\
        &\leq \sup_{\alpha \in \Gamma} |\widehat{K_{m_1}} - \widehat{K_{m_0}}|(\alpha) \norm{\mathcal{F}_\ZZ \Bigl( \prod_{i = 2}^r \Omega_{m_i} \ast \mathbbm{1}_{F_i} \Bigr)}_{L^2(\TT)} \\
        &\lesssim J^{-A_0} \norm{\mathcal{F}_\ZZ \Bigl( \prod_{i = 2}^r \Omega_{m_i} \ast \mathbbm{1}_{F_i} \Bigr)}_{L^2(\TT)} \\
        &= J^{-A_0} \norm{\Bigl( \prod_{i = 2}^r \Omega_{m_i} \ast \mathbbm{1}_{F_i} \Bigr)}_{l^2(\ZZ)} \\
        &\lesssim J^{2c(2) + 2c(3) + \ldots + 2c(r-1) - A_0} \norm{\Omega_{m_r} \ast \mathbbm{1}_{F_r}}_{l^2(\ZZ)} \\
        &\leq J^{2c(2) + 2c(3) + \ldots + 2c(r) - A_0} \norm{\mathbbm{1}_{F_r}}_{l^2(\ZZ)} \\
        &\leq J^{2c(2) + 2c(3) + \ldots + 2c(r) - A_0} |F|^{1/2} \\
      \end{aligned}
    \end{equation}
    where in the third-to-last and second-to-last inequalities we are using respectively \eqref{telescoping-operator-replacement-6} and \eqref{telescoping-operator-replacement-2}. Due to choice of parameters $(c(i))_{i = 1}^r$ and the number $A_0$, we evenutally get the proof of Lemma \ref{l2-boundedness-of-telescoping-operator}.
  \end{enumerate}
\end{proof}

We will also need, at the end of the argument for \eqref{lp-maximal-ergodic-inequality}, the following sampling principle of Magyar-Stein-Wainger (introduced in two versions, respectively in Lemma $2.1.$ and Corollary $2.1$, from \cite{magyar2002discrete}):
\begin{prop} \label{sampling-principle-magyar-stein-wainger}
  Given a natural number $q$ and a bounded multiplier $\mathbf{m} : \RR \to B$ mapping to a finite-dimensional Banach space with support inside $[\frac{-1}{2q}, \frac{1}{2q})$, define the periodic multiplier:
  \begin{equation}
    \mathbf{m}_{per}^q(\alpha) = \sum_{n \in \ZZ} \mathbf{m}(\xi - \frac{a}{q}). 
  \end{equation}
  Then:
  \begin{equation}
    \norm{\mathbf{m}_{per}^q}_{M^p(\ZZ)} \lesssim \norm{\mathbf{m}}_{M^p(\RR)}.
  \end{equation}
  where the multiplier norms $M^p$ are defined as follows:
  \begin{equation}
    \norm{m}_{M^p(\mathbb{G})} = \sup_{f : \norm{f}_{L^p(\mathbb{G})} = 1} \norm{\mathcal{F}^{-1}_{\mathbb{G}}(m \cdot \mathcal{F}_{\mathbb{G}}(f))}_{L^p(\mathbb{G})}, \text{ where } \mathbb{G} = \mathbb{R} \text{ or } \mathbb{Z}.
  \end{equation}
\end{prop}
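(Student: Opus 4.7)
The strategy is to decompose $f \in \ell^p(\ZZ)$ into residue classes modulo $q$ and recognize the operator with symbol $\mathbf{m}_{per}^q$ as acting, on each sampled sublattice, by a dilated version of $\mathbf{m}$; a de Leeuw-type transference principle will then bound the $\ell^p(\ZZ)$-norm of this dilated multiplier by $\|\mathbf{m}\|_{M^p(\RR)}$.

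First, since $B$ is finite-dimensional, I would reduce to the scalar-valued case by testing against norming functionals in $B^*$, at the cost of a factor depending only on $\dim B$. For $f : \ZZ \to \CC$ and each $s \in \{0, 1, \dots, q-1\}$, introduce the sampled function $g_s(k) := f(qk + s)$; then $\|f\|_{\ell^p(\ZZ)}^p = \sum_{s=0}^{q-1} \|g_s\|_{\ell^p(\ZZ)}^p$ and one has the Fourier identity $\widehat{f}(\alpha) = \sum_{r=0}^{q-1} e(-r\alpha)\widehat{g_r}(q\alpha)$.

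Next, denote by $T$ the $\ell^p(\ZZ)$-multiplier operator corresponding to $\mathbf{m}_{per}^q$. Exploiting the support hypothesis on $\mathbf{m}$, for $n = qk+s$ we unfold the defining integral as
\begin{equation*}
  Tf(qk+s) = \sum_{a=0}^{q-1} e((qk+s)a/q) \int_{|\beta|<1/(2q)} \mathbf{m}(\beta)\,\widehat{f}(\beta + a/q)\, e((qk+s)\beta)\,d\beta.
\end{equation*}
Substituting the expansion of $\widehat{f}$, invoking the orthogonality $\sum_{a=0}^{q-1} e((s-r)a/q) = q\,\mathbbm{1}_{s \equiv r \,(\bmod\, q)}$, and changing variable $\xi = q\beta$ produces the key identity $Tf(qk+s) = T' g_s(k)$, where $T'$ is the $\ell^p(\ZZ)$-multiplier whose symbol is $\widetilde{\mathbf{m}}(\xi) := \mathbf{m}(\xi/q)$, compactly supported in $[-1/2, 1/2)$.

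The decisive step is then transference: because $\widetilde{\mathbf{m}}$ is supported in a single fundamental domain of $\TT$, a de Leeuw-type inequality gives $\|\widetilde{\mathbf{m}}\|_{M^p(\ZZ)} \leq \|\widetilde{\mathbf{m}}\|_{M^p(\RR)}$, and dilation invariance of $M^p(\RR)$ simplifies the right side to $\|\mathbf{m}\|_{M^p(\RR)}$. Summing the identity $Tf(qk+s) = T'g_s(k)$ over $(k,s)$ delivers the advertised bound. The main obstacle is arranging the algebra in the unfolding step cleanly enough that the $a$-sum collapses to the Kronecker delta $\mathbbm{1}_{s \equiv r}$ without spurious boundary contributions; once this identity is established, the transference step is standard for finite-dimensional $B$ and proceeds with constant $1$, so no losses beyond $\dim B$ are incurred.
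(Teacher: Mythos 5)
The paper does not prove this proposition; it is cited from Lemma 2.1 and Corollary 2.1 of Magyar--Stein--Wainger \cite{magyar2002discrete}, so there is no in-paper proof to compare against. Your strategy --- decompose into residue classes modulo $q$, establish the sampling identity $Tf(qk+s) = T'g_s(k)$ with $T'$ the dilated multiplier $\widetilde{\mathbf m}(\xi) = \mathbf m(\xi/q)$ acting on $\TT$, then apply a de Leeuw-type transference and dilation invariance --- is in fact the same overall route that MSW take, and the sampling identity you derive is correct (the Fourier expansion $\widehat{f}(\alpha)=\sum_{r}e(-r\alpha)\widehat{g_r}(q\alpha)$, the collapse of the $a$-sum to a Kronecker delta via orthogonality, and the change of variables $\xi = q\beta$ are all clean).

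The one genuine problem is your opening reduction to the scalar-valued case. It is unnecessary: the sampling identity, the de Leeuw transference, and the dilation invariance are all insensitive to whether $\mathbf m$ is scalar- or $B$-valued, so nothing in the argument uses the scalar reduction. Worse, it is harmful: the reduction via a basis of $B^*$ costs a constant of order $\dim B$, and every invocation of this proposition in the paper takes $B$ to be a space whose dimension grows with the scale parameter (e.g.\ an $\ell^\infty$-space over the set of dyadic scales in the maximal or variational operator). A bound that depends on $\dim B$ would therefore be useless for the application, and the whole point of the MSW sampling principle is that the constant is uniform in both $q$ and $B$. You should simply delete the scalar reduction and run the rest of the argument for $B$-valued multipliers directly. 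Secondarily, the de Leeuw step for $q=1$ (a multiplier supported in $[-1/2,1/2)$ viewed on $\TT$ has $\ell^p(\ZZ)$-norm at most its $L^p(\RR)$-norm) is where the substance of the result lives, and invoking it as ``standard'' is acceptable as a citation but not as a proof; for a self-contained argument this step needs its own approximate-identity or band-limiting argument, which is exactly what MSW's Lemma 2.1 supplies.
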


Eventually we may move to the proof of \eqref{lp-maximal-ergodic-inequality} by establishing the High-Low decomposition of the operator $f \to \sup_{m \in 2^\NN} |K_m \ast f|$. We will mimic the argument from \cite{krause2022discrete}.
Take some $C_0$ and $C$ with $1 \ll C_0 \ll C$ and separate the range of supremum into two intervals:
\begin{equation} \label{lp-maximal-inequality-splitting}
  \sup_{m \in 2^\NN} |K_m \ast f| \leq \sup_{m \in 2^\NN : m \leq 2^{2^{CS}}} |K_m \ast f| + \sup_{m \in 2^\NN : m > 2^{2^{CS}}} |K_m \ast f|.
\end{equation}
The first component can be attached to the low part by virtue of Lemma \ref{lp-partial-maximal-inequality}.
The second component is split further:
\begin{equation} \label{lp-maximal-inequality-splitting-2}
  \sup_{m \in 2^\NN : m > 2^{2^{CS}}} |K_m \ast f| \leq \sup_{m \in 2^\NN : m > 2^{2^{CS}}} |L_m \ast f| + \sum_{m \in 2^\NN : m > 2^{2^{CS}}} |K_m \ast f - L_m \ast f|
\end{equation}
The second sum has norm $O(2^{-S} \norm{f}_{l^2(\ZZ)})$ after applying \ref{special-prime-kernel-and-approximant-on-fourier-side-remark} for $A = 2$ and not-too-large $B$:
\begin{equation}
  \begin{aligned}
    \norm{\sum_{m \in 2^\NN : m > 2^{2^{CS}}} |K_m \ast f - L_m \ast f|}_{l^2(\ZZ)} &\leq \sum_{m > 2^{2^{CS}}} \norm{K_m \ast f - L_m \ast f}_{l^2(\ZZ)} \\
    &\leq \sum_{m \in 2^\NN : m > 2^{2^{CS}}} \norm{K_m \ast f - L_m \ast f}_{l^2(\ZZ)} \\
    &\lesssim_n \sum_{m \in 2^\NN : m > 2^{2^{CS}}} \frac{1}{(\log m)^2} \norm{f}_{l^2(\ZZ)} \\
    &\lesssim 2^{-CS} \norm{f}_{l^2(\ZZ)}. \\
  \end{aligned}
\end{equation}
Now we use the triangle inequality for the third time to address the first term in \eqref{lp-maximal-inequality-splitting-2}:
\begin{equation}
  \sup_{m \in 2^\NN : m > 2^{2^{CS}}} |L_m \ast f| \leq \sup_{m \in 2^\NN : m > 2^{2^{CS}}} \bigg|\sum_{s \leq S} L_{m, s} \ast f \bigg| + \sum_{s > S} \sup_{m \in 2^\NN : m > 4^{s + 4}} |L_{m, s} \ast f|.
\end{equation} 
Computations for the $l^2(\ZZ)$-norm of the second sum present as follows (we use \eqref{multi-frequency-operator-boundedness-dyadic-fractions-2}):
\begin{equation}
  \begin{aligned}
    \norm{\sum_{s > S} \sup_{m \in 2^\NN : m > 4^{s + 4}} |L_{m, s} \ast f|}_{l^2(\ZZ)} &\leq \sum_{s > S} \norm{\sup_{m \in 2^\NN : m > 4^{s+4}} |L_{m, s} \ast f|}_{l^2(\ZZ)} \\
    &\lesssim \sum_{s > S} 2^{-c_d s} \norm{f}_{l^2(\ZZ)} \\
    &\lesssim 2^{-c_d S} \norm{f}_{l^2(\ZZ)}. \\
  \end{aligned}
\end{equation}
It remains to decompose:
\begin{equation} \label{lp-maximal-inequality-splitting-3}
  \sup_{m \in 2^\NN : m > 2^{2^{CS}}} \bigg|\sum_{2^s \leq m} L_{m, s} \ast f \bigg| 
\end{equation}
Define $M_{m, S}$ and $Q_S$ as follows:
\begin{equation}
  Q_S = \text{lcm}(1, 2, \ldots, 2^S) 
\end{equation}
\begin{equation}
  \widehat{M_{m, S}}(\alpha) = \sum_{a \in [Q_S]} \frac{S(a, Q_S)}{R_n \varphi_2(Q_{S, 0})} v_m \Bigl(\alpha - \frac{a}{q} \Bigr) \varphi_{2^{C_0 S} + 2} \Bigl(\alpha - \frac{a}{q} \Bigr)
\end{equation}
We now conclude the rest of argument where we obtain the low-high part decomposition of \eqref{lp-maximal-inequality-splitting-3} from the Magyar-Stein-Wainger principle:
\begin{enumerate}
  \item The reason why $\widehat{M_{m, S}}$ is close to $\widehat{L_m}$ around fractions with denominator at most $\frac{\sqrt{m}}{16}$ is that the exponential sum $\frac{S(a, Q_S)}{\varphi_2(Q_{S, 0})}$ is invariant under reducing by $\text{gcd}(a, Q_S)$. This is a consequence of the last identity from Lemma \ref{arithmetic-functions-estimates}. We use the information that $|\alpha - a/q| \gtrsim 2^{-2^{C_0 S}}$ to bound:
  \begin{equation}
    \begin{aligned}
      \norm{\sup_{m \in 2^\NN : m > 2^{2^{CS}}} \bigg| \Bigl(\sum_{s \leq S} L_{m, s} - M_{m, S} \Bigr) \ast f \bigg| }_{l^2(\ZZ)} &\leq \sum_{m \in 2^\NN : m > 2^{2^{CS}}} \norm{ \Bigl(\sum_{s \leq S} L_{m, s} - M_{m, S} \Bigr) \ast f}_{l^2(\ZZ)} \\
      \leq \sum_{m \in 2^\NN : m > 2^{2^{CS}}} & \norm{ \Bigl(\sum_{s \leq S} L_{m, s} - M_{m, S} \Bigr) }_{L^\infty(\TT)} \norm{f}_{l^2(\ZZ)} \\
      \leq \sum_{m \in 2^\NN : m > 2^{2^{CS}}} & \frac{2^{2^{C_0 S} / d}}{m^{1/d}} \norm{f}_{l^2(\ZZ)}
    \end{aligned}
  \end{equation}
  where in the last inequality we have used Lemma B.2 from \cite{krause2022discrete}. The sum on the right-hand side respects exponential decay with respect to $S$, so $\sum_{s \leq S} L_{m, s} - M_{m, S}$ is subsumed in the low part.
  \item The fraction $\frac{S(a, Q_S)}{\varphi_2(Q_{S, 0})}$ is a convex combination of exponential phases with integral coefficients (they are $N(u + v \omega_n)$ for integral $u, v$). Therefore Proposition \ref{sampling-principle-magyar-stein-wainger} is applicable.
\end{enumerate}
This concludes the proof of \eqref{lp-maximal-ergodic-inequality}.

\section{Proof of Theorem \ref{variational-ergodic-theorem-special-prime-numbers}} \label{section-7}

For the next section we focus on the issue of quantifying convergence; we shall begin by introducing the framework for Ionescu-Wainger theory. 
\begin{defi} \label{ionescu-wainger-set}
  Let $\rho \in (0, 1)$ be a very small parameter (later on, we will fix this precisely) and assume that $N \geq 2^R$, where $R = \lfloor \frac{2}{\rho} \rfloor + 1$. Set:
  \begin{equation}
    S_\rho(N) = \Bigl\{ \prod_{p \leq N^{\rho / 2}} p^{\lfloor \log N / \log p \rfloor} : p \in \PP \Bigr\} \cup \bigcup_{p \in (N^{\rho/2}, N] \cap \PP} \Bigl\{ p^{\lfloor \log N / \log p \rfloor} \Bigr\}.
  \end{equation}
  The $N$-th set of Ionescu-Wainger frequencies with parameter $\rho$ is:
  \begin{equation}
    \begin{aligned}
      \Sigma_{\leq R}(N) &= \Bigl\{ \frac{a}{q} : a \leq q, \\
      &\text{ there are at most } R \text{ elements in } S_\rho(N) \text{ whose product is } q  \Bigr\}
    \end{aligned}
  \end{equation}
\end{defi}

Below we list all the properties of this set that we will need below:
\begin{enumerate}[label=(\Alph*)] 
  \item \label{ionescu-wainger-theory-recap-a} There exists an absolute constant $C_0$ for which the following statement is true: all fractions with denominator at most $N$ belong to $\Sigma_{\leq R}(N)$, and all elements of $\Sigma_{\leq R}(N)$ are fractions with denominator at most $C_0^{R^2 N^{\rho/2}}$.
  \item \label{ionescu-wainger-theory-recap-b} For a multiplier $\mathbf{m} : \RR \to \RR$ supported on sufficiently small interval (i.e.~$[-\exp(-N^\rho), \\ \exp(- N^\rho)]$), we introduce the $\leq N$-th Ionescu-Wainger multiplier with parameter $\rho$ as:
  \begin{equation} \label{ionescu-wainger-multiplier}
    \Pi_{\leq N}[\mathbf{m}](\beta) = \Pi_{\leq N}^\rho[\mathbf{m}](\beta) = \sum_{\theta \in \Sigma_{\leq R}(N)} \mathbf{m}(\beta - \theta).
  \end{equation}
  This multiplier satisfies the following estimate for any $p \in (1, \infty)$:
  \begin{equation} \label{ionescu-wainger-inequality}
    \norm{\Pi_{\leq N}[\mathbf{m}]}_{M^p(\ZZ)} \lesssim_{p, \rho} \norm{\mathbf{m}}_{M^p(\RR)},
  \end{equation}
  \item \label{ionescu-wainger-theory-recap-c} For fractions $a/q$ we will denote by $\mathbf{h}(a/q)$ its Ionescu-Wainger height, that is, the smallest dyadic $N$ so that $a/q \in \Sigma_{\leq R}(N)$ 
\end{enumerate}

Apart from that, we will use a few results coming from discrete harmonic analysis:
\begin{enumerate}
  \item The operators $\text{Osc}_{(I_j)_{j \in [1, N]}}(f_n(x) : n \in \NN)$, $\mathcal{V}^r(f_n(x) : n \in \NN)$ and $N_{\lambda}(f_n(x) : n \in \NN)$ denote the following:
  \begin{equation}
    \text{Osc}_{(I_j)_{j \in [1, N]}}(f_n(x) : n \in \NN) = \Bigl( \sum_{j = 1}^N \sup_{i \in [I_j, I_{j+1})} |f_i - f_{I_j}|^2(x) \Bigr)^{1/2}
  \end{equation}
  \begin{equation}
    \mathcal{V}^r(f_n(x) : n \in \NN) = \sup_{j; n_0 < \ldots < n_j \in \NN} \Bigl( \sum_{i = 1}^j |f_i - f_{i-1}|^r(x) \Bigr)^{1/r}
  \end{equation}
  \begin{equation}
    N_{\lambda}(f_n(x) : n \in \NN) = \sup \{j : \text{ there exists } n_0 < \ldots < n_j \in \NN : \forall_{i \in [j]} |f_{n_i}(x) - f_{n_{i-1}}(x)| > \lambda \} 
    \medskip
  \end{equation}
  where for each $n$, $f_n$ are complex-valued functions with a $\sigma$-finite measure space $X$ as their domain.
  For any subsequence $r > 2$, $I \in \NN$ and any $\lambda$ one has pointwise control:
  \begin{equation} \label{2-variation-majorizes-all-operators-in-f}
    \text{Osc}_I(f_n(x) : n \in \NN), \mathcal{V}^r(f_n(x) : n \in \NN), \lambda N_{\lambda}^{1/2}(f_n(x) : n \in \NN) \leq \mathcal{V}^2(f_n(x) : n \in \NN)
  \end{equation}
  Additionally, by Lemma 2.5 from \cite{mirek2020bootstrapping}, one has the so-called Rademacher-Menshov inequality:
  \begin{equation} \label{rademacher-menshov-inequality}
    \mathcal{V}^2(f_n(x) : n \in [0, 2^m - 1]) \leq \sqrt{2} \sum_{i = 0}^{m - 1} \Bigl( \sum_{j = 0}^{2^{m - i}} \big| f_{2^i (j + 1)} - f_{2^i j} \big|^2(x) \Bigr)^{1/2}.
  \end{equation}
  \item Let $\mathcal{F}$ represent the following set of operators that take sequences of functions $(f_n : X \to \CC)_{n \in \NN}$ into single complex-valued functions with domain $X$:
  \begin{equation}
    \begin{aligned}
      \mathcal{F} &:= \Bigl\{ \text{Osc}_I : I = (I_n)_{n \in [1, N]} \text{ is finite subsequence in } \NN \Bigr\}  \\
      &\cup \Bigl\{ \lambda N_\lambda^{1/2} : \lambda > 0 \Bigr\} \cup \Bigl\{ \mathcal{V}^r : r > 2 \Bigr\}.
    \end{aligned}
  \end{equation}
  For an operator $U \in \mathcal{F}$, define the \textit{r-factor}, $r(U)$, as follows:
  \begin{equation}
    r(U) := \begin{cases}
      1 &\text{ if } U = \text{Osc}_I \text{ or } U = \lambda N_\lambda^{1/2} \\
      \frac{r}{r - 2} &\text{ if } U = \mathcal{V}^r.
    \end{cases}
  \end{equation}
  There are two basic properties that operators in $\mathcal{F}$ uniformly satisfy, the first of them is the so-called $l^1$-control:
  \begin{equation} \label{l1-control-on-operators-in-f}
    U(f_n : n \in \NN)(x) \lesssim \sum_{n = 1}^\infty |f_n|(x)
  \end{equation}
  and the second one is the \textit{quasi-triangle inequality}, i.e.: whenever $U \in \mathcal{F}$ and $(f_n)_{n \geq 1}$ and $(g_n)_{n \geq 1}$ are functions from $\sigma$-finite measure space $X$ to $\CC$, then there exists $U_1 \in \mathcal{F}$, $r(U) = r(U_1)$ so that:
  \begin{equation} \label{quasi-triangle-inequality}
    U(f_n + g_n : n \in \NN)(x) \lesssim U_1(f_n : n \in \NN)(x) + U_1(g_n : n \in \NN)(x)
  \end{equation}
  Whenever $U = \text{Osc}_I$ or $\mathcal{V}^r$, one may take $U_1 = U$ and the implicit constant above is $1$, when $U$ is the jump counting function, we get:
  \begin{equation}
    \lambda N_\lambda^{1/2}(f_n + g_n : n \in \NN)(x) \leq \lambda N_{\lambda/2}^{1/2}(f_n : n \in \NN)(x) + \lambda N_{\lambda/2}^{1/2}(g_n : n \in \NN)(x).
  \end{equation}
  \item One can split the variation into a \textit{long} and a \textit{short} part with respect to an increasing sequence of integers $\mathcal{A} = (A_j)_{j = 1}^\infty$. The corresponding long and short variations are given respectively by:
  \begin{equation}
    \mathcal{V}^{r, L}_{\mathcal{A}}(f_i)(x) = \sup_{n_1 < \ldots < n_t} \Biggl( \sum_{k = 1}^{t-1} |f_{A_{n_{k+1}}} - f_{A_{n_k}}|^r \Biggr)^{1/r}(x)
  \end{equation}
  and
  \begin{equation}
    \mathcal{V}^{r, S}_{\mathcal{A}}(f_i)(x) = \Biggl( \sum_{n = 1}^\infty \Bigl( \sup_{A_n \leq n_1 < \ldots < n_t < A_{n+1}} \sum_{k = 1}^{t-1} |f_{n_{k+1}} - f_{n_k}|^r \Bigr) \Biggr)^{1/r}(x).
  \end{equation}
  These notions where introduced firstly in \cite{jones2008strong}. Zorin-Kranich in \cite{zorin2015variation} proved that (see formula (2.4)):
  \begin{equation} \label{variation-splitted}
    \mathcal{V}^r(f_i)(x) \leq \mathcal{V}^{r, L}_{\mathcal{A}}(f_i)(x) + 2 \mathcal{V}^{r, S}_{\mathcal{A}}(f_i)(x).
  \end{equation}
  \item One can establish a similar construction for the jump counting function. As in the previous point, let $\mathcal{A} = (A_j)_{j = 1}^\infty$ be an increasing sequence of integers, then one defines:
  \begin{equation}
    \begin{aligned}
      N_{\lambda, \mathcal{A}}^L(f_i)(x) = \sup \{ r \in \NN : \text{ there exist numbers } &s_1 < t_1 \leq \ldots \leq s_r < t_r \\
      &\text{ so that } \forall_{i \in [r]} |f_{A_{t_i}} - f_{A_{s_i}}| > \lambda \} 
    \end{aligned}
  \end{equation}
  We recall the statement of Lemma 1.3 from \cite{jones2008strong}, namely that for all $\rho \geq 1$, $\lambda$ and sequence of functions $(f_i : X \to \CC)_{i \in \NN}$:
  \begin{equation} \label{variation-splitted-2}
    \lambda N_{\lambda}(f_n)(x)^{1/\rho} \leq 9 \Bigl( \mathcal{V}^{\rho, S}_{\mathcal{A}}(f_i)(x) + \lambda N_{\lambda / 3, \mathcal{A}}^L(f_i)(x)^{1 / \rho} \Bigr).
  \end{equation}
  Meanwhile, for the oscillation operator $\text{Osc}_I$, we may define the long counterpart as:
  \begin{equation}
    \text{Osc}^L_{I, \mathcal{A}}(f_n(x) : n \in \NN) = \Bigl( \sum_{j = 1}^N \sup_{A_i \in [J_j, J_{j+1}) } |f_{A_i} - f_{J_j}|^2(x) \Bigr)^{1/2}
  \end{equation}
  where the sequence $(J_j)_{j = 1}^M$ is constructed from $(I_j)_{j = 1}^M$ and $\mathcal{A} = (A_i)_{i \in \NN}$ in the following fashion:
  \begin{equation}
    J_j = \max \{A_i : i \in \NN, A_i \leq I_j \}.
  \end{equation}
  Due to Cauchy-Schwarz and Minkowski's inequality, we may analogously bound:
  \begin{equation} \label{variation-splitted-3}
    \text{Osc}_I(f_n)(x) \leq 5 \Bigl( \mathcal{V}^{2, S}_{\mathcal{A}}(f_i)(x) + \text{Osc}_{I, \mathcal{A}}^L(f_i)(x) \Bigr).
  \end{equation}
  \item We merge the data encoded in inequalities \eqref{variation-splitted}, \eqref{variation-splitted-2} and \eqref{variation-splitted-3} together. Let $U \in \mathcal{F}$, then $U_\mathcal{A}^L$ is defined as:
  \begin{equation}
    U_\mathcal{A}^L = \begin{cases}
      \mathcal{V}_{\mathcal{A}}^{r, L} &\text{ if } U = \mathcal{V}^r \\
      \frac{\lambda}{3} (N_{\lambda/3, \mathcal{A}}^L)^{1/2} &\text{ if } U = \lambda N_\lambda^{1/2} \\
      \text{Osc}_{I, \mathcal{A}}^L &\text{ if } U = \text{Osc}_I. \\
    \end{cases}
  \end{equation}
  We have the following inequality for all $U \in \mathcal{F}$:
  \begin{equation} \label{variation-splitted-4}
    U(f_n)(x) \leq 27 \Bigl( \mathcal{V}^{2, S}_{\mathcal{A}}(f_i)(x) + U_{\mathcal{A}}^L(f_i)(x) \Bigr).
  \end{equation}
  \item All operators from $\mathcal{F}$ satisfy L\'epingle's inequality, which we recall. Suppose one is dealing with operators concerning the dyadic martingale i.e.~the family $(\EE_N)_{N \in \NN}$ so that $\EE_N f(x) = \frac{1}{2^N} \int_{y \in I_x} f(y)$, where $I_x$ is dyadic interval of length $2^N$ containing $x$. Then for all $p \in (1, \infty)$, the estimates: 
  \begin{equation}
    \begin{gathered}
      \norm{\mathcal{V}^r(\EE_N f(x) : N \in \ZZ)}_{L^p(\RR)} \lesssim_p \frac{r}{r - 2} \norm{f}_{L^p(\RR)} \\
      \forall_{\lambda > 0} \norm{\lambda N_\lambda^{1/2}(\EE_N f(x) : N \in \ZZ)}_{L^p(\RR)} \lesssim_p \norm{f}_{L^p(\RR)} \\
      \forall_{I \in \NN : |I| < \infty} \norm{\text{Osc}_I(\EE_N f(x) : N \in \ZZ)}_{L^p(\RR)} \lesssim_p \norm{f}_{L^p(\RR)}
    \end{gathered}
  \end{equation} 
  were established as follows: the first in \cite{lepingle1976variation}, the second jointly in \cite{bourgain1989pointwise} (see equation 3.5) and \cite{jones1998oscillation} and the last one in \cite{jones1998oscillation} (see Theorem 6.4).
  The goal for this case is to specify precisely the form of L\'epingle inequality that will be useful later. Namely, fix $t > 0$ and suppose that $M_t$ is an averaging operator on functions $\RR \to \CC$:
  \begin{equation}
    M_t f(x) = \frac{1}{t} \int_0^t f(x - P(t)).
  \end{equation}
  Using Theorems $2.14$ and $2.39$ from \cite{mirek2020bootstrapping} together with Lemma $2.12$ from \cite{mirek2020jumpinterpolation}, one gets the following inequalities, we will slightly abuse notation and refer to all of them as L\'epingle's inequality for polynomial averaging operators: for $p \in (1, \infty)$ the following estimates hold
  \begin{equation} \label{lepingle-inequality-radon-averaging-operator}
    \begin{gathered}
      \norm{\mathcal{V}^r(M_t f(x) : t > 0)}_{L^p(\RR)} \lesssim_p \frac{r}{r - 2} \norm{f}_{L^p(\RR)} \\
      \forall_{\lambda > 0} \norm{\lambda N_\lambda^{1/2}(M_t f(x) : t > 0)}_{L^p(\RR)} \lesssim_p \norm{f}_{L^p(\RR)} \\
      \forall_{I \in \NN : |I| < \infty} \norm{\text{Osc}_I(M_t f(x) : t > 0)}_{L^p(\RR)} \lesssim_p \norm{f}_{L^p(\RR)}
    \end{gathered}
  \end{equation}
\end{enumerate}

In all the inequalities from Theorem \ref{variational-ergodic-theorem-special-prime-numbers} we will begin by using partial summation as in the case of maximal inequality, we state the following proposition, which will be useful in these efforts:
\begin{prop} \label{weight-transference-for-operators-in-f}
  Let $(w_n)_{n \in \NN}$ and $(w_n')_{n \in \NN}$ be non-negative sequences satisfying one of the following conditions:
  \begin{enumerate}
    \item The sequence $\Bigl( \frac{w_n'}{w_n} \Bigr)_{n \in \NN}$ decreases monotonically;
    \item The sequence $\Bigl( \frac{w_n'}{w_n} \Bigr)_{n \in \NN}$ increases monotonically and:
    \begin{equation}
      C = \sup_{N \in \NN} \frac{W_N w_N'}{W_N' w_n} < \infty
    \end{equation}
    where $W_N = \sum_{n = 1}^N w_n$ and $W_N' = \sum_{n = 1}^N w_n'$. 
  \end{enumerate}
  Then, for any sequence $(a_n)_{n \in \NN}$ of complex numbers, any finite subsequence $I = (i_j)_{j \leq M}$ of natural numbers, any real number $\lambda > 0$, and any $r > 2$:
  \begin{equation} \label{weight-transference-for-operators-in-f-inequality}
    \begin{gathered}
      \mathcal{V}^r \Bigl( \sum_{n = 1}^N w_n a_n : N \in \NN \Bigr) \lesssim \mathcal{V}^r \Bigl( \sum_{n = 1}^N w_n' a_n : N \in \NN \Bigr) \\
      \lambda N_\lambda^{1/2} \Bigl( \sum_{n = 1}^N w_n a_n : N \in \NN \Bigr) \leq \sum_j C_j D_j \lambda N_{D_j \lambda}^{1/2} \Bigl( \sum_{n = 1}^N w_n' a_n : N \in \NN \Bigr) \\
      \text{Osc}_I \Bigl( \sum_{n = 1}^N w_n a_n : N \in \NN \Bigr) \leq \sum_j C_j \text{Osc}_{I_j} \Bigl( \sum_{n = 1}^N w_n' a_n : N \in \NN \Bigr) 
    \end{gathered}
  \end{equation}
  where:
  \begin{enumerate}
    \item The implied constant in first inequality in \eqref{weight-transference-for-operators-in-f-inequality} depends only on  $C$;
    \item In the second and third inequalities, the set of indexes $j$ used for the outer summation might be countably infinite, but the sum $\sum_j C_j$ is bounded by $O_C(1)$;
    \item The sequences $(I_j)_{j \in \NN}, (C_j)_{j \in \NN}$ and $(D_j)_{j \in \NN}$ are only determined by the sequences $(w_n)_{n \in \NN}, \newline (w_n')_{n \in \NN}$ and do not rely on the choice of the sequence $(a_n)_{n \in \NN}$.
  \end{enumerate}
\end{prop}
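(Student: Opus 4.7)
The plan is to prove the proposition by Abel summation. Assuming without loss of generality that $w_n, w_n' > 0$ (otherwise the vanishing terms contribute nothing), set $c_n = w_n / w_n'$, $T_N = \sum_{n=1}^N w_n a_n$, and $T_N' = \sum_{n=1}^N w_n' a_n$ (with $T_0 = T_0' = 0$). The fundamental identity, obtained by summation by parts, is
\[
T_N - T_M = c_{M+1}(T_N' - T_M') + \sum_{n=M+1}^{N-1}(c_{n+1} - c_n)(T_N' - T_n'),
\]
which expresses block differences of $T$ as signed linear combinations of block differences of $T'$, with total coefficient $c_N$. In Case 1 ($c_n$ is increasing) the coefficients $c_{n+1} - c_n$ are non-negative and telescope cleanly to $c_N$; in Case 2 they are non-positive, and after reorganizing the identity becomes a non-negative combination of differences $(T_N' - T_n')$ of total mass $c_{M+1}$, which is uniformly controlled by the hypothesis $C = \sup_N W_N w_N'/(W_N' w_N) < \infty$ after comparing the block-level ratios with the cumulative ratio.

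For the variation inequality, I would insert this Abel representation into the definition of $\mathcal{V}^r$: given any partition $N_0 < N_1 < \dots < N_j$, Jensen's inequality applied to the convex combination $(c_{M+1}, c_{M+2}-c_{M+1}, \dots)$ bounds each summand $|T_{N_i} - T_{N_{i-1}}|^r$ by a weighted combination of $|T_{N_i}' - T_n'|^r$, and rearranging sums reveals the bound by a uniform constant (depending only on $C$ in Case 2) times $\mathcal{V}^r(T_n' : n \in \NN)^r$. For the oscillation and jump-counting operators the usual triangle inequality is unavailable, so I would instead partition $\NN$ into dyadic blocks $B_k = \{n : c_n \in [2^k, 2^{k+1})\}$: the monotonicity of $c_n$ guarantees each $B_k$ is an interval, and on $B_k$ the partial increments of $T$ are pointwise within a factor of two of $2^k$ times the partial increments of $T'$. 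Applying the quasi-triangle inequality \eqref{quasi-triangle-inequality} across these dyadic scales, the jump-counting operator $\lambda N_\lambda^{1/2}(T_N)$ is controlled by a sum $\sum_k C_k D_k \lambda N_{D_k \lambda}^{1/2}(T_N')$ with $D_k \sim 2^{-k}$ and a matching rescaling of $\lambda$, while $\text{Osc}_I(T_N)$ reduces to $\sum_k C_k \text{Osc}_{I_k}(T_N')$ with subsequences $I_k$ obtained from intersecting the original partition $I$ with the block $B_k$.

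The principal technical obstacle will be verifying $\sum_k C_k = O_C(1)$ for the jump-counting and oscillation decompositions: since $\lambda N_\lambda^{1/2}$ is not subadditive, one must calibrate the rescaling factors $D_k$ so that the quasi-triangle inequality does not destroy geometric summability. This is precisely the content of the Case 2 hypothesis $C = \sup_N W_N w_N'/(W_N' w_N) < \infty$, which forces the cumulative weight redistribution across dyadic $c_n$-scales to decay (or be summable) at a rate compatible with the rescaling. The $\ell^1$ control \eqref{l1-control-on-operators-in-f} of operators in $\mathcal{F}$ is then invoked to absorb the boundary remainder terms that arise at the endpoints of each block $B_k$, completing the reduction to the target inequalities.
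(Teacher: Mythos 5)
Your Abel-summation identity
\begin{equation*}
  T_N - T_M = c_{M+1}(T_N' - T_M') + \sum_{n=M+1}^{N-1}(c_{n+1}-c_n)(T_N'-T_n')
\end{equation*}
is correct and captures the right starting point, but the argument for the oscillation and jump-counting operators has a genuine gap. The central problem is your claim that on a dyadic block $B_k = \{n : c_n \in [2^k, 2^{k+1})\}$ the increments of $T$ are pointwise within a factor of two of $2^k$ times the increments of $T'$. The Abel identity does not say that; the total mass of the coefficients is $c_N \in [2^k, 2^{k+1})$, but the identity expresses $T_N - T_M$ as a combination of $T_N'-T_n'$ over \emph{all intermediate} $n \in [M, N)$, not merely $T_N'-T_M'$. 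These intermediate differences can be wildly different in size and sign from $T_N'-T_M'$, so no pointwise proportionality holds, and the count of $\lambda$-jumps of $T$ cannot be read off from the count of $\lambda/2^k$-jumps of $T'$ inside a block. Moreover, the quasi-triangle inequality \eqref{quasi-triangle-inequality} is stated for a splitting $f_n = f_n^{(1)} + f_n^{(2)}$ of the \emph{values}, not for a partition of the \emph{index range} into blocks; for the jump-counting operator there is no obvious analogue of the long/short splitting that you are tacitly invoking when you say you will sum over blocks.

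Your variation argument is closer to the mark but still leaves the key step unaddressed. After Jensen the prefactor is $c_N^{r-1}$, where $c_N = w_N/w_N'$ runs over the whole sequence and need not be bounded; you note that this ``is uniformly controlled by the hypothesis $C$'' but do not exhibit the cancellation that makes this true, and the hypothesis $C = \sup_N W_N w_N'/(W_N' w_N) < \infty$ bounds a ratio of cumulative sums, not $c_N$ itself.

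The paper proceeds quite differently and avoids both difficulties. Following Lemma 2 of \cite{mirek2017variational}, one constructs from the weights a matrix $(\lambda_n^k)_{n,k}$ of non-negative reals with $\sum_n \lambda_n^k = \Lambda < \infty$ for all $k$ and with $k \mapsto \sum_{n\le N}\lambda_n^k$ decreasing, so that (up to a harmless multiple of the identity coming from the $2C$ normalization) the transferred partial sums are $\sum_n \lambda_n^k a_n$. The crucial observation \eqref{weight-transference-identity} is the \emph{integral representation} $\sum_n \lambda_n^k a_n = \int_0^\Lambda a_{N_k(t)}\,dt$, where $N_k(t)$ is the generalized inverse of the cumulative $\lambda$-weights; because each $N_k(\cdot)$ is non-decreasing in $k$, this writes the transferred sequence as an average of reparametrized copies of $(a_n)$. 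Minkowski's integral inequality then gives the oscillation bound directly, and for the jump-counting operator the paper runs a dyadic pigeonholing argument \eqref{weight-transference-for-operators-in-f-inequality-6}--\eqref{weight-transference-for-operators-in-f-inequality-7} at thresholds $2^j\lambda/(10\Lambda)$ over the $t$-integral, showing that at least one scale must carry a $3^{-j}$ proportion of the jumps. It is this integral-averaging structure, not a dyadic splitting of the $c_n$-scale, that makes the non-subadditive operators $N_\lambda$ and $\text{Osc}_I$ tractable. Without it, I do not see how to make your block decomposition close.
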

The first inequality from \eqref{weight-transference-for-operators-in-f-inequality} was already stated in Proposition 5.1 from \cite{mirek2017variational}. The second and the third inequalities can be proved in a similar manner, still we will discuss the proof of the third inequality:
\begin{proof}
  Following Lemma 2 from \cite{mirek2017variational}, we introduce a double-indexed sequence $(\lambda_n^k)_{n, k \in \NN}$ of non-negative real numbers such that for every $k$ one has:
  \begin{equation}
    \sum_{n = 1}^\infty \lambda_n^k = \Lambda < \infty.
  \end{equation}
  Furthermore assume that for every $N \in \NN$, the sequence:
  \begin{equation}
    k \to \sum_{n = 1}^N \lambda_n^k
  \end{equation}
  is decreasing. We will firstly show that there are finite sequences $(I_j)_{j \in \NN}$ together with constants $C_j$ with $\sum_j C_j = \Lambda$ so that:
  \begin{equation} \label{weight-transference-for-operators-in-f-inequality-2}
    \text{Osc}_I \Bigl( \sum_{n = 1}^\infty \lambda_n^k a_n : k \in \NN \Bigr) \leq \sum_j C_j \text{Osc}_{I_j} \Bigl( a_n : n \in \NN \Bigr) 
  \end{equation}
  For each $k \in \NN$ we define a function $N_k : [0, \Lambda] \to \NN$ by:
  \begin{equation}
    N_k(t) = \inf \{ N \in \NN : \sum_{i = 1}^N \lambda_i^k > t\}
  \end{equation}
  and $I_n^k = \{t \in [0, \Lambda] : N_k(t) = n \}$. Following page 14 of \cite{mirek2017variational}, one gets that:
  \begin{equation} \label{weight-transference-identity}
    \sum_{n = 1}^\infty \lambda_n^k a_n = \int_0^\Lambda a_{N_k(t)} dt.
  \end{equation}
  Using this identity, we compute:
  \begin{equation} \label{weight-transference-for-operators-in-f-inequality-3}
    \begin{aligned}
      \Bigl( \sum_{j = 1}^{M-1} \sup_{k_j \in [i_j, i_{j+1})}\bigg| \sum_{n = 1}^\infty (\lambda_n^{k_j} - \lambda_n^{i_j}) a_n \bigg|^2 \Bigr)^{1/2} &= \Bigl( \sum_{j = 1}^{M-1} \sup_{k_j \in [i_j, i_{j+1})}\bigg| \int_0^\Lambda a_{N_{k_j}(t)} - a_{N_{i_j}(t)} dt \bigg|^2 \Bigr)^{1/2}  \\
      &= \Bigl( \sum_{j = 1}^{M-1} \Bigl( \int_0^\Lambda \sup_{k_j \in [i_j, i_{j+1})} \big| a_{N_{k_j}(t)} - a_{N_{i_j}(t)} \big| dt \Bigr)^2 \Bigr)^{1/2}  \\
      &\leq \int_0^\Lambda \Biggl( \sum_{j = 1}^{M-1} \sup_{k_j \in [i_j, i_{j+1})} \big| a_{N_{k_j}(t)} - a_{N_{i_j}(t)} \big|^2 \Biggr)^{1/2} dt.  \\
    \end{aligned}
  \end{equation}
  Due to the fact that $k \to \sum_{n = 1}^N \lambda_n^k$ is decreasing, one knows that $N_k(t)$ is non-decreasing, therefore the right-hand side of \eqref{weight-transference-for-operators-in-f-inequality-3} can be estimated by:
  \begin{equation}
    \int_0^\Lambda \Biggl( \sum_{j = 1}^{M-1} \sup_{k \in [N_{i_j}(t), N_{i_{j+1}}(t))} \big| a_k - a_{N_{i_j}(t)} \big|^2 \Biggr)^{1/2} dt.  \\
  \end{equation}
  There are countably many sequences of the form $(N_{i_1}(t), \ldots, N_{i_M}(t))$ when $t$ varies in the interval $[0, \Lambda]$, therefore \eqref{weight-transference-for-operators-in-f-inequality-2} is satisfied indeed.
  In a similar fashion to \eqref{weight-transference-for-operators-in-f-inequality-2}, we would like to verify now that:
  \begin{equation} \label{weight-transference-for-operators-in-f-inequality-4}
    \lambda N_\lambda^{1/2} \Bigl( \sum_{n = 1}^\infty \lambda_n^k a_n : k \in \NN \Bigr) \leq \sum_{j = 0}^\infty 10 \Lambda \frac{3^{j/2}}{2^j} \frac{2^j \lambda}{10 \Lambda} N_{\frac{2^j \lambda}{10 \Lambda}}^{1/2}( a_n : n \in \NN ).
  \end{equation}
  Take a sequence $(k_i)_{i = 0}^m$ so that for every $i \in \{0, \ldots, m-1\}$ one has:
  \begin{equation}
    \sum_{n = 1}^\infty \lambda_n^{k_{i+1}} a_n - \sum_{n = 1}^\infty \lambda_n^{k_i} a_n > \lambda
  \end{equation}
  In particular, we obtain that:
  \begin{equation} \label{weight-transference-for-operators-in-f-inequality-5}
    \sum_{i = 0}^{m-1} \int_0^\Lambda |a_{N_{k_{i+1}}(t)} - a_{N_{k_i}(t)}| dt > m \lambda,
  \end{equation}
  where we have used \eqref{weight-transference-identity}. Pick an arbitrary $t$, we claim that there exists a nonnegative integer $j$ so that:
  \begin{equation} \label{weight-transference-for-operators-in-f-inequality-6}
    u_j(t) := \bigg|\bigl\{ i \in \{0, \ldots, m-1 \} : |a_{N_{k_{i+1}}(t)} - a_{N_{k_i}(t)}| > \frac{2^j \lambda}{5 \Lambda} \bigr\} \bigg| \geq \frac{m}{3^j}
  \end{equation}
  If that is not the case then with $u_{-1}(t) := m$ one gets that:
  \begin{equation} \label{weight-transference-for-operators-in-f-inequality-7}
    \begin{aligned}
      \sum_{i = 0}^{m-1} \int_0^\Lambda |a_{N_{k_{i+1}}(t)} - a_{N_{k_i}(t)}| dt &\leq \int_0^\Lambda \sum_{j = 0}^\infty (u_{j-1}(t) - u_j(t)) \frac{2^j \lambda}{5 \Lambda} dt \\
      &\leq m \frac{\lambda}{5} + \int_0^\Lambda \sum_{j = 0}^\infty u_j(t) \frac{2^j \lambda}{5 \Lambda} dt. \\
    \end{aligned}
  \end{equation}
  Since we assume that $u_j(t)$ is everywhere bounded by $\frac{m}{3^j}$, by combining \eqref{weight-transference-for-operators-in-f-inequality-5} and \eqref{weight-transference-for-operators-in-f-inequality-7}, we derive the inequality:
  \begin{equation}
    m \lambda < \frac{m \lambda}{5} + m \lambda \sum_{j = 0}^\infty \frac{2^j}{5 \cdot 3^j} = \frac{4 m \lambda}{5}
  \end{equation}
  clearly giving contradiction, unless $m = 0$, in which case $N_\lambda = 0$. Now, due to \eqref{weight-transference-for-operators-in-f-inequality-6}, we obtain that:
  \begin{equation}
    N_{\frac{2^j \lambda}{10 \Lambda}}(a_n : n \in \NN) \geq \frac{m}{3^j}
  \end{equation}
  and also:
  \begin{equation}
    \lambda m^{1/2} \leq 10 \Lambda \frac{3^{j/2}}{2^j} \frac{2^j \lambda}{10 \Lambda} N_{\frac{2^j \lambda}{10 \Lambda}}^{1/2}( a_n : n \in \NN ). 
  \end{equation}
  Taking a supremum in $m$, one arrives at \eqref{weight-transference-for-operators-in-f-inequality-4}.
  Suppose now that the sequence $\Bigl( \frac{w_n'}{w_n} \Bigr)_{n \in \NN}$ increases monotonically. Then for:
  \begin{equation} \label{weight-transference-defined-sequence}
    \lambda_n^k := \begin{cases} \frac{W_n}{W_k'} (\frac{w_n'}{w_n} - \frac{w_{n+1}'}{w_{n+1}}) &\text{ if } n \in [1, k) \\
      \frac{W_k}{W_k'} \frac{w_k'}{w_k} &\text{ if } n = k \\
      0 &\text{ otherwise, }
    \end{cases}
  \end{equation}
  \begin{equation} \label{weight-transference-defined-sequence-2}
    A_N = \sum_{n = 1}^N w_n a_n, \qquad A_N' = \sum_{n = 1}^N w_n' a_n
  \end{equation}
  and also:
  \begin{equation}
    \tilde{\lambda}_n^k = \begin{cases} - \lambda_n^k &\text{ if } n \in [1, k) \\
      2C - \lambda_k^k &\text{ if } n = k \\
      0 &\text{ otherwise, }
    \end{cases}
  \end{equation}
  we see that $\tilde{\lambda}_n^k$ are nonnegative and by the partial summation:
  \begin{equation}
    A_k' = 2C A_k - \sum_{n = 1}^\infty \tilde{\lambda}_n^k A_n.
  \end{equation}
  Similarly to page 15 of \cite{mirek2017variational}, one sees that for any positive integer $N$: $k \to (\sum_{n = 1}^N \tilde{\lambda}_n^k)_{k \in \NN}$ is decreasing with:
  \begin{equation}
    \sum_{n = 1}^\infty \tilde{\lambda}_n^k = 2C - 1.
  \end{equation}
  Therefore, using \eqref{weight-transference-for-operators-in-f-inequality-2}, one gets that:
  \begin{equation}
    \begin{aligned}
    \text{Osc}_I(A_N' : N \in \NN) &\leq 2C \text{Osc}_I(A_N : N \in \NN) + \text{Osc}_I \Bigl( \sum_{n = 1}^k \tilde{\lambda}_n^k A_n : k \in \NN \Bigr) \\
    &\leq 2C \text{Osc}_I(A_N : N \in \NN) + \sum_j C_j \text{Osc}_{I_j} \Bigl( A_N : N \in \NN \Bigr) 
    \end{aligned}
  \end{equation}
  as we wanted. A simpler argument works with the jump counting operator (where we use \eqref{weight-transference-for-operators-in-f-inequality-4}) and when $(\frac{w_n'}{w_n})$ decreases monotonically, where we maintain the same definitions as \eqref{weight-transference-defined-sequence} and \eqref{weight-transference-defined-sequence-2}.
\end{proof}

The proof of Theorem \ref{variational-ergodic-theorem-special-prime-numbers} is almost the same as the proof of rapid convergence for polynomial ergodic averages given in Chapter 8 of \cite{krause2022discrete}. We suggest strongly that reader first look at what is happening there, before jumping into the proof below. We put emphasis on all the aspects where the reasoning goes differently, but we will unify the approach for all operators in the family $\mathcal{F}$:
\begin{proof}[Proof of Theorem \ref{variational-ergodic-theorem-special-prime-numbers}] We proceed in a sequence of steps.

  \textit{\textbf{Step 1:} Attaching the von Mangoldt weight to the ergodic average.} Fix an operator $U$ from the family $\mathcal{F}$. Similar to what we did for the maximal ergodic theorem, for the sake of circle method, we would like to investigate the analogous inequality for the von Mangoldt weighted averages. 
    Applying Proposition \ref{weight-transference-for-operators-in-f} with $w_n' = \log n$, $w_n = 1$ and $a_n = \mathbf{1}_\PP f(x - n)$, we reduce our goal to showing that:
    \begin{equation}
      \norm{U( A_N' f(x) : N \in \NN )}_{l^p(\ZZ)} \lesssim_{n, p} r(U) \norm{f}_{l^p(\ZZ)}.
    \end{equation}
    We will use the sequence of variables $\rho, A_0, \epsilon$ so that we fix the order:
    \begin{equation} \label{order-of-variables}
      \rho^{-1} \gg A_0 \gg B \gg A \gg \epsilon^{-1} \gg \theta_p \gg 1,
    \end{equation}
    with $A, B$ as in \ref{minor-arc-inequality} and $\theta_p$ suitable for interpolation arguments.

  \textit{\textbf{Step 2:} Sparsifying the sequence of indexes in the variational operator.} We use the splitting into long and short variation as in \eqref{variation-splitted-4}, in this concrete situation we take sequence $\mathcal{B} = (\lfloor 2^{k^\epsilon} \rfloor)_{k = 1}^\infty$. Then Theorem \ref{variational-ergodic-theorem-special-prime-numbers} will follow from the two inequalities:
    \begin{equation} \label{long-variational-ergodic-theorem-special-prime-numbers}
      \norm{U^L_{\mathcal{B}}( A_N' f(x) : N \in \NN )}_{l^p(\ZZ)} \lesssim_{\rho, n, p} r(U) \norm{f}_{l^p(\ZZ)}
    \end{equation}
    and
    \begin{equation}
      \norm{\mathcal{V}^{2, S}_{\mathcal{B}}( A_N' f(x) : N \in \NN )}_{l^p(\ZZ)} \lesssim_{\rho, n, p} \norm{f}_{l^p(\ZZ)}.
    \end{equation}
    The sequence $\mathcal{B}$ is sufficiently dense for showing the second statement:
    \begin{equation}
      \begin{aligned}
        &\norm{\mathcal{V}^{2, S}_{\mathcal{B}} ( A_N' f(x) : N \in \NN )}_{l^p(\ZZ)} \\
        &\leq \norm{\Biggl( \sum_{n = 1}^\infty \Bigl( \sup_{\lfloor 2^{n^\epsilon} \rfloor \leq n_1 < \ldots < n_t < \lfloor 2^{(n+1)^\epsilon} \rfloor} \sum_{k = 1}^{t-1} |A_{n_{k+1}}' f - A_{n_k}' f| \Bigr)^2 \Biggr)^{1/2}(x)}_{l^p(\ZZ)} \\
        &\leq \norm{\Biggl( \sum_{n = 1}^\infty \Bigl( \sum_{j = \lfloor 2^{n^\epsilon} \rfloor}^{\lfloor 2^{(n+1)^\epsilon} \rfloor - 1} |A_{j+ 1}' f - A_j' f| \Bigr)^2 \Biggr)^{1/2}(x)}_{l^p(\ZZ)} \\
        &\leq \Biggl( \sum_{n = 1}^\infty \Bigl( \sum_{j = \lfloor 2^{n^\epsilon} \rfloor}^{\lfloor 2^{(n+1)^\epsilon} \rfloor - 1} \norm{A_{j+ 1}' f - A_j' f}_{l^p(\ZZ)} \Bigr)^{\min(2, p)} \Biggr)^{1/\min(2, p)} \\
        &\leq \Biggl( \sum_{n = 1}^\infty \Bigl( \sum_{j = \lfloor 2^{n^\epsilon} \rfloor}^{\lfloor 2^{(n+1)^\epsilon} \rfloor - 1} \frac{1 + \log(j+1)}{j+1} \norm{f}_{l^p(\ZZ)} \Bigr)^{\min(2, p)} \Biggr)^{1/\min(2, p)} \\
        &\lesssim_p \Biggl( \sum_{n = 1}^\infty n^{\min(2, p)(2 \epsilon - 1)} \Biggr)^{1/2} \norm{f}_{l^p(\ZZ)} \lesssim \norm{f}_{l^p(\ZZ)} \\
      \end{aligned}
    \end{equation}
    From now on, we endeavour to establish \eqref{long-variational-ergodic-theorem-special-prime-numbers}.

  \textit{\textbf{Step 3:} Addressing the contribution from the minor arcs.}  First, we will reduce \eqref{long-variational-ergodic-theorem-special-prime-numbers} to the same statement for a sequence of operators precomposed with Ionescu-Wainger multipliers:
    \begin{equation} \label{long-variational-ergodic-theorem-special-prime-numbers-2}
      \norm{U \Biggl( A_{\lfloor 2^{k^\epsilon} \rfloor}' \Bigl(\Pi_{\leq k^{A_0}}(\mathbf{m})^\vee \ast f \Bigr)(x) : k \in \NN \Biggr)}_{l^p(\ZZ)},
    \end{equation}
    where the multiplier $\mathbf{m}$ is $\varphi_{d' k^{\epsilon}} (\beta)$ with $d' = \frac{2d - 1}{2}$, where $d$ is the degree of the polynomial $P$. 
    In both inequalities \eqref{long-variational-ergodic-theorem-special-prime-numbers} and \eqref{long-variational-ergodic-theorem-special-prime-numbers-2} one may restrict the $k$-range only to integers bigger than $O_\rho(1)$. The support of $\mathbf{m}$ is in the interval $[- 2^{- d' k^\epsilon}, 2^{- d' k^\epsilon}]$, which in turn is inside the interval $[- \exp(- k^{A_0 \rho}), \exp(- k^{A_0 \rho})]$ due to \eqref{order-of-variables}. This means that conditions of \eqref{ionescu-wainger-inequality} are satisfied for $\Pi_{\leq k^{A_0}}(\mathbf{m})$. In order to show the reduction stated earlier in this step, observe that:
    \begin{equation} \label{long-variational-ergodic-theorem-special-prime-numbers-2-split}
      \norm{A_{\lfloor 2^{k^\epsilon} \rfloor}' \Bigl(f - \Pi_{\leq k^{A_0}}(\mathbf{m})^\vee \ast f \Bigr)(x)}_{l^p(\ZZ)} \lesssim_{n, \rho, p} k^{-2} \norm{f}_{l^p(\ZZ)}
    \end{equation}
    as we can employ \eqref{quasi-triangle-inequality} together with \eqref{l1-control-on-operators-in-f}. Inequality \eqref{long-variational-ergodic-theorem-special-prime-numbers-2-split} is proved using interpolation methods, which we now describe. Due to \eqref{ionescu-wainger-inequality}, one has that:
    \begin{equation} \label{long-variational-ergodic-theorem-special-prime-numbers-2-lp}
      \norm{A_{\lfloor 2^{k^\epsilon} \rfloor}' \Bigl(f - \Pi_{\leq k^{A_0}}(\mathbf{m})^\vee \ast f \Bigr)(x)}_{l^p(\ZZ)} \lesssim_{\rho, p} \norm{f}_{l^p(\ZZ)}.
    \end{equation}
    On the other hand, the supremum of:
    \begin{equation}
      |\widehat{K_{\lfloor 2^{k^\epsilon} \rfloor}} \cdot (1 - \Pi_{\leq k^{A_0}}(\mathbf{m}))|
    \end{equation}
    on the torus is smaller than as the supremum of $\widehat{K_{\lfloor 2^{k^\epsilon} \rfloor}}$ outside of the union:
    \begin{equation}
      \mathcal{R}_{k^{A_0}} := \bigcup_{a/q : a \perp q, q \leq k^{A_0}} \Bigl[ \frac{a}{q} - 2^{-d' k^\epsilon - 2}, \frac{a}{q} + 2^{-d' k^\epsilon - 2} \Bigr].
    \end{equation}
    Therefore, due to \eqref{order-of-variables}, one gets that:
    \begin{equation}
      m_B \subset \TT \backslash \mathcal{R}_{k^{A_0}},
    \end{equation}
    so Theorem \ref{minor-arc-inequality} implies that:
    \begin{equation} \label{long-variational-ergodic-theorem-special-prime-numbers-2-l2}
      |\widehat{K_{\lfloor 2^{k^\epsilon} \rfloor}} \cdot (1 - \Pi_{\leq k^{A_0}}(\mathbf{m}))| \lesssim_n \frac{1}{k^{\epsilon A}} \leq \frac{1}{k^{\theta_p}}.
    \end{equation}
    Eventually, \eqref{long-variational-ergodic-theorem-special-prime-numbers-2-l2}, \eqref{long-variational-ergodic-theorem-special-prime-numbers-2-lp}, the choice of $\theta_p$, and Plancherel's identity imply \eqref{long-variational-ergodic-theorem-special-prime-numbers-2-split}, as we wanted.

  \textit{\textbf{Step 4:} Taking advantage of major arc behaviour given in Theorem \ref{major-arc-behaviour}.}  The next step requires passing from \eqref{long-variational-ergodic-theorem-special-prime-numbers-2} to:
    \begin{equation} \label{long-variational-ergodic-theorem-special-prime-numbers-3}
      \norm{U( (M_k^{(1)})^\vee \ast f(x) : k \in \NN )}_{l^p(\ZZ)} \lesssim_{n, p} r(U) \norm{f}_{l^p(\ZZ)},
    \end{equation}
    where:
    \begin{equation}
      M_k^{(1)}(\alpha) = \sum_{\mathbf{h}(a/q) \leq k^{A_0}} \frac{S(a, q)}{R_n \varphi_2(q_0)} v_{\lfloor 2^{k^\epsilon} \rfloor} \Bigl(\alpha - \frac{a}{q} \Bigr) \varphi_{d' k^{\epsilon}} (\alpha - \frac{a}{q}).
    \end{equation}
    Thanks to \ref{quasi-triangle-inequality} and \eqref{l1-control-on-operators-in-f}, it is sufficient for that reduction to get an appropriate bound for:
    \begin{equation} \label{long-variational-ergodic-theorem-special-prime-numbers-3-split}
      \norm{ \Bigl( (M_k^{(1)})^\vee - K_{\lfloor 2^{k^\epsilon} \rfloor} \ast \Pi_{\leq k^{A_0}}(\mathbf{m})^\vee \Bigr) \ast f(x)}_{l^p(\ZZ)}.
    \end{equation}
    Since the size of $k^{A_0}$-th Ionescu-Wainger set is $O(2^{k^{A_0 \rho}})$ and $v_{\lfloor 2^{k^\epsilon} \rfloor}^\vee$, $\varphi(2^{d' k^\epsilon})^\vee$ have $O(1)$-bounded $l^1$ norms, we get that above term is bounded by:
    \begin{equation}
      O_{\rho, p}(2^{k^{A_0 \rho}} \norm{f}_{l^p(\ZZ)}).
    \end{equation}
    We contrast this with $l^2(\ZZ)$-methods. Namely, the identity from Theorem \ref{major-arc-behaviour} implies that:
    \begin{equation}
      \norm{ \Bigl( (M_k^{(1)})^\vee - K_{\lfloor 2^{k^\epsilon} \rfloor} \ast \Pi_{\leq k^{A_0}}(\mathbf{m})^\vee \Bigr) \ast f(x)}_{l^2(\ZZ)} \lesssim_B \exp(-c k^{\epsilon/2}),
    \end{equation}
    so \eqref{order-of-variables} and interpolation implies that \eqref{long-variational-ergodic-theorem-special-prime-numbers-3-split} is at most $2^{- k^{\epsilon/3}}$, which is sufficient for this reduction.

  \textit{\textbf{Step 5:} Oscillatory integral estimates allows us to pass to the multiplier with support not depending on $k$.}  The following reduction uses again the aforementioned reference and is constructed so that the Ionescu-Wainger operator pops up:
    \begin{equation} \label{long-variational-ergodic-theorem-special-prime-numbers-4}
      \norm{U( (M_k^{(2)})^\vee \ast f(x) : k \in \NN )}_{l^p(\ZZ)} \lesssim_{n, p} r(U) \norm{f}_{l^p(\ZZ)},
    \end{equation}
    where:
    \begin{equation}
      M_k^{(2)}(\alpha) = \sum_{s : 2^s \leq k^{A_0}} \sum_{\mathbf{h}(a/q) = 2^s} \frac{S(a, q)}{R_n \varphi_2(q_0)} v_{\lfloor 2^{k^\epsilon} \rfloor} \Bigl(\alpha - \frac{a}{q} \Bigr) \varphi_{2^{d' \epsilon s / A_0}} (\alpha - \frac{a}{q}).
    \end{equation}
    Indeed, assuming \eqref{long-variational-ergodic-theorem-special-prime-numbers-4}, we may use \eqref{quasi-triangle-inequality} and \eqref{l1-control-on-operators-in-f} to show that: 
    \begin{equation} \label{long-variational-ergodic-theorem-special-prime-numbers-4-nonsplit}
      \sum_{k = 1}^\infty \norm{(M_k^{(1)} - M_k^{(2)})^\vee \ast f}_{l^p(\ZZ)} \lesssim_{n, p, \rho} \norm{f}_{l^p(\ZZ)}.
    \end{equation}
    We use strong $l^2$ estimates and acceptable $l^p$ estimates to establish \eqref{long-variational-ergodic-theorem-special-prime-numbers-4}.
    Note that $M_k^{(1)} - M_k^{(2)}$ is the same as:
    \begin{equation} \label{difference-of-ionescu-wainger-multipliers}
      \sum_{s : 2^s \leq k^{A_0}} \sum_{\mathbf{h}(a/q) = 2^s} \frac{S(a, q)}{R_n \varphi_2(q_0)} v_{\lfloor 2^{k^\epsilon} \rfloor} \Bigl(\alpha - \frac{a}{q} \Bigr) \Bigl( \varphi_{d' k^{\epsilon}} (\alpha - \frac{a}{q}) - \varphi_{2^{d' \epsilon s / A_0}} (\alpha - \frac{a}{q}) \Bigr).
    \end{equation}
    For the $l^p$ case, since $\frac{d' \epsilon}{A_0} > \rho$, and the third equality from Lemma \ref{arithmetic-functions-estimates} is satisfied, namely 
    \begin{equation}
      \frac{S(a, q)}{R_n \varphi_2(q_0)}
    \end{equation}
    is essentially a convex combination of phases. We conclude from \eqref{ionescu-wainger-inequality} that the above multiplier satisfies:
    \begin{equation} \label{long-variational-ergodic-theorem-special-prime-numbers-4-lp}
      \norm{(M_k^{(1)} - M_k^{(2)})^\vee \ast f}_{l^p(\ZZ)} \lesssim_{p, \rho} \norm{f}_{l^p(\ZZ)}.
    \end{equation}
    We now consider what is happening in the $l^2$-situation. More precisely, one has that for fixed $\alpha \in \TT$, there is only one term inside \eqref{difference-of-ionescu-wainger-multipliers} that does not vanish. If the accompanying fraction is $\frac{a}{q}$, then:
    \begin{equation}
      |\alpha - \frac{a}{q}| \gtrsim 2^{-d' k^\epsilon},
    \end{equation}
    so the oscillatory integral $v_{\lfloor 2^{k^\epsilon} \rfloor}(\alpha - \frac{a}{q})$ is bounded by $2^{- \frac{1}{2d} k^\epsilon}$ (due to Lemma B.2. from \cite{krause2022discrete}), which means that the expression \eqref{difference-of-ionescu-wainger-multipliers} is uniformly bounded by $2^{- \frac{1}{2d} k^\epsilon}$, so:
    \begin{equation}
      \norm{(M_k^{(1)} - M_k^{(2)})^\vee \ast f}_{l^2(\ZZ)} \lesssim 2^{- \frac{1}{2d} k^\epsilon} \norm{f}_{l^2(\ZZ)}.
    \end{equation}
    Interpolating this inequality together with \eqref{long-variational-ergodic-theorem-special-prime-numbers-3-split} followed by summing the result over all natural $k$ leads to \eqref{long-variational-ergodic-theorem-special-prime-numbers-4-nonsplit}.

  \textit{\textbf{Step 6:} Factorization of the multiplier into two parts: decoupling arithmetic structure and analytic structure.}  In order to establish \eqref{long-variational-ergodic-theorem-special-prime-numbers-4}, it suffices to estimate:
    \begin{equation} \label{long-variational-ergodic-theorem-special-prime-numbers-5}
      \sum_{s \geq 1} \norm{U( M_{k, s}^\vee \ast f(x) : k \geq 2^{s / A_0} )}_{l^p(\ZZ)} \lesssim_{n, p} r(U) \norm{f}_{l^p(\ZZ)},
    \end{equation}
    where we factor:
    \begin{equation} \label{long-variational-ergodic-theorem-factorization}
      M_{k, s} = T_s \Pi_s [v_{\lfloor 2^{k^\epsilon} \rfloor}(\cdot) \varphi_{2^{d' \epsilon s / A_0}}] 
    \end{equation}
    with
    \begin{equation} \label{arithmetic-multiplier}
      T_s(\alpha) = \sum_{\mathbf{h}(a/q) = 2^s} \frac{S(a, q)}{R_n \varphi_2(q_0)} \varphi_{2^{d' \epsilon s / A_0} + 1}
    \end{equation}
    In order to deduce \eqref{long-variational-ergodic-theorem-special-prime-numbers-4} from \eqref{long-variational-ergodic-theorem-special-prime-numbers-5}, just observe that:
    \begin{equation}
      M_k^{(2)} = \sum_{s : 2^s \leq k^{A_0}} M_{k, s}
    \end{equation}

  \textit{\textbf{Step 7:} Showing that Weyl sum estimates imply the exponential saving for the arithmetic factor.} We first discuss how to prove:
    \begin{equation} \label{long-variational-ergodic-theorem-weyl-and-mollifier}
      \norm{T_s}_{M^p(\ZZ)} \lesssim 2^{- \Theta_{d, p}(1) s}.
    \end{equation}
    Observe that by Lemma \ref{arithmetic-functions-estimates}, the $l^\infty$-norm of $T_s$ is of order $O(2^{- c_d s})$, therefore:
    \begin{equation} \label{arithmetic-multiplier-l2-situation}
      \norm{T_s}_{M^2(\ZZ)} \lesssim 2^{-c_d s}.
    \end{equation}
    At this point, using point \ref{ionescu-wainger-theory-recap-a} on the quantity of rationals with Ionescu-Wainger height leads us to the upper bound on $M^p(\ZZ)$-norm:
    \begin{equation} \label{arithmetic-multiplier-lp-situation}
      \norm{T_s}_{M^p(\ZZ)} \lesssim 2^{2^{\rho s}},
    \end{equation}
    which is too large to interpolate. Instead, we will apply our estimates for Weyl sums in \eqref{arithmetic-multiplier} to the multiplier \eqref{kernel-of-ergodic-average}:
    \begin{equation}
      \begin{aligned}
        T_s(\alpha) &= \sum_{\mathbf{h}(a/q) = 2^s} \widehat{K_{2^{2^{\epsilon s/2A_0}}}}(\alpha) \varphi_{2^{d' \epsilon s / A_0} + 1} (\alpha - \frac{a}{q}) \\
        &+ \sum_{\mathbf{h}(a/q) = 2^s} \Biggl(\frac{S(a, q)}{R_n \varphi_2(q_0)} - \widehat{K_{2^{2^{\epsilon s/2A_0}}}}(\alpha) \Biggr) \varphi_{2^{d' \epsilon s / A_0} + 1} (\alpha - \frac{a}{q}) \\
        &=: T_s^{(1)}(\alpha) + T_s^{(2)}(\alpha),
      \end{aligned}
    \end{equation}
    and will try to prove that these two operators $T_s^{(1)}$ and $T_s^{(2)}$ have respectively exponentially and double exponentially decaying $M^p(\ZZ)$-norms. Regarding $T_s^{(1)}$, the $M^p(\ZZ)$-norm of it is bounded by $O_{p, \rho}(1)$ (using \eqref{ionescu-wainger-inequality}) whereas $M^2(\ZZ)$-norm still obeys the exponential decay (using a combination of Lemma \ref{arithmetic-functions-estimates}, \eqref{special-prime-kernel-common-thing} and Property \ref{ionescu-wainger-theory-recap-b} of the Ionescu-Wainger theory recap).
    As for $T_s^{(2)}$, observe that it has Fourier support inside intervals of radius at most $2^{-2^{\epsilon s/A_0}}$ around rationals of Ionescu-Wainger height equal $2^s$. So, we have the following estimation for the $M^2(\ZZ)$-norm coming from the inequality:
    \begin{equation}
      \begin{aligned}
        \bigg| \frac{S(a, q)}{R_n \varphi_2(q_0)} - \widehat{K_{2^{2^{\epsilon s/2A_0}}}}(\alpha) \bigg| &\leq \bigg| \frac{S(a, q)}{R_n \varphi_2(q_0)} - \widehat{K_{2^{2^{\epsilon s/2A_0}}}} \Bigl( \frac{a}{q} \Bigr) \bigg| \\
        &+ \bigg| \widehat{K_{2^{2^{\epsilon s/2A_0}}}} \Bigl(\frac{a}{q} \Bigr) - \widehat{K_{2^{2^{\epsilon s/2A_0}}}}(\alpha) \bigg|. \\
      \end{aligned}
    \end{equation}
    From the Lipschitz nature of $\widehat{K_{2^{2^{\epsilon s/2A_0}}}}$, one gets that the second magnitude is at most $2^{-2^{\epsilon s/4 A_0}}$ whereas the first one is of order $O(2^{-2^{\epsilon s/4 A_0}})$ due to \eqref{special-prime-kernel-common-thing}. So:
    \begin{equation}
      \norm{T_s^{(2)}}_{M^2(\ZZ)} \lesssim 2^{-2^{\epsilon s/4A_0}}.
    \end{equation}
    On the other hand, from point \ref{ionescu-wainger-theory-recap-a} of Ionescu-Wainger theory introduction, one knows that number of rationals modulo $1$ that have Ionescu-Wainger height $2^s$ is not bigger than $O(2^{2^{\rho s}})$, so:
    \begin{equation}
      \norm{T_s^{(2)}}_{M^p(\ZZ)} \lesssim_\rho 2^{2^{s \rho}}.
    \end{equation}
    Interpolating last two inequalities lead us to the bound:
    \begin{equation}
      \norm{T_s^{(2)}}_{M^p(\ZZ)} \lesssim_{\rho, p, A_0} 2^{-2^{\epsilon s/10 A_0}},
    \end{equation}
    which is what we wanted.

  \textit{\textbf{Step 8:} Distinguishing large and small scales in the continuous factor.}  The final goal for us is to establish the inequality:
    \begin{equation} \label{long-variational-ergodic-theorem-only-analytic-ionescu-wainger}
      \norm{U \Bigl( \Pi_s [v_{\lfloor 2^{k^\epsilon} \rfloor}(\cdot) \varphi_{2^{d' \epsilon s / A_0}}]^\vee \ast f(x) : k \geq 2^{s / A_0} \Bigr) }_{l^p(\ZZ)} \lesssim r(U) 2^{3 \rho s} \norm{f}_{l^p(\ZZ)}.
    \end{equation}
    Indeed, combining \eqref{long-variational-ergodic-theorem-factorization}, \eqref{long-variational-ergodic-theorem-weyl-and-mollifier} and \eqref{long-variational-ergodic-theorem-only-analytic-ionescu-wainger} would exhibit exponential decay with respect to $s$:
    \begin{equation}
      \begin{aligned}
        \norm{U \Bigl( M_{k, s}^\vee \ast f(x) : k \geq 2^{s / A_0} \Bigr) }_{l^p(\ZZ)} &\leq \norm{U \Bigl( \Pi_s [v_{\lfloor 2^{k^\epsilon} \rfloor}(\cdot) \varphi_{2^{d' \epsilon s / A_0}}]^\vee \ast T_s^\vee \ast f(x) : k \geq 2^{s / A_0} \Bigr) }_{l^p(\ZZ)}  \\
        &\lesssim r(U) 2^{3 \rho s} \norm{T_s^\vee \ast f}_{l^p(\ZZ)} \\
        &\lesssim r(U) 2^{3 \rho s - c_p s} \norm{f}_{l^p(\ZZ)}
      \end{aligned}
    \end{equation}
    as we know that $\frac{1}{\rho} > O_p(1)$. 
    We split the range in the variation \eqref{long-variational-ergodic-theorem-only-analytic-ionescu-wainger} into a small scale part:
    \begin{equation} \label{only-analytic-ionescu-wainger-small-range}
      \norm{U \Bigl( \Pi_s [v_{\lfloor 2^{k^\epsilon} \rfloor}(\cdot) \varphi_{2^{d' \epsilon s / A_0}}]^\vee \ast f(x) : 2^{2^{3 s \rho}} \geq k \geq 2^{s / A_0} \Bigr) }_{l^p(\ZZ)} \lesssim 2^{3 \rho s} \norm{f}_{l^p(\ZZ)},
    \end{equation}
    and large scale one:
    \begin{equation} \label{only-analytic-ionescu-wainger-large-range}
      \norm{U \Bigl( \Pi_s [v_{\lfloor 2^{k^\epsilon} \rfloor}(\cdot) \varphi_{2^{d' \epsilon s / A_0}}]^\vee \ast f(x) : k \geq 2^{2^{3 s \rho}} \Bigr) }_{l^p(\ZZ)} \lesssim r(U) \norm{f}_{l^p(\ZZ)}.
    \end{equation}
    

  \textit{\textbf{Step 9:} The Rademacher-Menshov is used to verifying the small scale case.}
    Combining \eqref{rademacher-menshov-inequality} together with \eqref{ionescu-wainger-inequality}, it is enough to show that for every $i \leq 2^{3 s \rho}$ one has:
    \begin{equation}
      \norm{ \Bigl( \sum_{j = 0}^{\infty} \Bigg| \sum_{k \in I_j^i} ( (v_{\lfloor 2^{k^\epsilon} \rfloor} - v_{\lfloor 2^{(k+1)^\epsilon} \rfloor}) \varphi_{2^{d' \epsilon s / A_0}})^\vee \ast f \Bigg|^2 \Bigr)^{1/2} }_{L^p(\RR)} \lesssim \norm{f}_{L^p(\RR)}
    \end{equation}
    where $I_j^i = [j \cdot 2^i, (j+1) 2^i)$. That will naturally follow from a different inequality: suppose that one has an increasing sequence of real numbers $(t_i)_{i = 1}^\infty$, then we endeavour to show that:
    \begin{equation} \label{only-analytic-ionescu-wainger-small-range-2}
      \norm{ \Bigl( \sum_{i = 0}^{\infty} \big| ( (v_{t_{i+1}} - v_{t_i}) )^\vee \ast f \big|^2 \Bigr)^{1/2} }_{L^p(\RR)} \lesssim \norm{f}_{L^p(\RR)}
    \end{equation}
    The left-hand side above can be treated as an $\mathcal{V}^2$-variation along the range of all positive real numbers. Using \eqref{variation-splitted} for $r = 2$, one may reduce \eqref{only-analytic-ionescu-wainger-small-range-2} into showing $2$ identities below:
    \begin{equation} \label{only-analytic-ionescu-wainger-long-variation}
      \norm{\sum_{i \in \ZZ} \epsilon_i (v_{2^{i+1}} - v_{2^i})^\vee \ast f}_{L^p(\RR)} \lesssim_p \norm{f}_{L^p(\RR)}
    \end{equation} 
    for any sequence of complex numbers $(\epsilon_i)_{i \in \ZZ}$ bounded universally by $1$, by randomization, and:
    \begin{equation} \label{only-analytic-ionescu-wainger-short-variation}
      \norm{ \Bigl( \sum_{i \in \ZZ} \mathcal{V}^2(v_t^\vee \ast f : t \in [2^i, 2^{i+1}])^2 \Bigr)^{1/2}}_{L^p(\RR)} \lesssim_p \norm{f}_{L^p(\RR)}.
    \end{equation}
    Recall at the beginning that action of $v_t^\vee$ is given by the formula:
    \begin{equation}
      v_t^\vee \ast f(x) = \frac{1}{t} \int_0^t f(x - P(u)) du,
    \end{equation}
    therefore by the triangle inequality:
    \begin{equation}
      \norm{v_t^\vee \ast f}_{L^p(\RR)} \leq \norm{f}_{L^p(\RR)}
    \end{equation}
    for all measurable $f$, $p \geq 1$ and $t > 0$. In order to justify \eqref{only-analytic-ionescu-wainger-long-variation}, we would like to employ Theorem 2.28 from \cite{mirek2020bootstrapping} (particularly the second case). Take $B_i = v_{2^{i+1}} - v_{2^i}$ and let $S_i$ be the operator given by Fourier multiplier whose symbol is a smooth indicator of the set $[-2^{-di}, -2^{-d(i+1)}] \cup [2^{-d(i+1)}, 2^{-di}]$. Then the sequence $a_j$ constructed by inequality $(2.31)$ from \cite{mirek2020bootstrapping} is of size $O(2^{-j})$ so in particular the constant $\mathbf{a}$ from their Theorem $2.14$ is finite. 
    When $p \in (1, 2)$, then take $(q_0, q_1) = (1, p)$, then the $L^{q_1}$-boundedness of the operator $B_\ast$ follows from the Hardy-Littlewood inequality and the positivity of the function $v_t^\ast$. This means that all conditions of Theorem 2.28 are satisfied, therefore we get that \eqref{only-analytic-ionescu-wainger-long-variation} is indeed true. In the case $p \geq 2$, one refers to a duality argument. Let us now explain why \eqref{only-analytic-ionescu-wainger-short-variation} is true as well. This time we want to use Theorem 2.39 from \cite{mirek2020bootstrapping}. One sees that $f \to (v_{t + h}^\vee - v_t^\vee) \ast f$ is an operator from $L^1 \to L^1$ with the norm $O(\frac{h}{t})$. One can verify that by applying the triangle inequality on expression:
    \begin{equation}
      (v_{t + h}^\vee \ast f - v_t^\vee \ast f)(x) = \frac{h}{t(t+h)} \int_0^{t} f(x - P(u)) du - \frac{1}{t+h} \int_t^{t + h} f(x - P(u)) du.
    \end{equation}
    Therefore the operator norm satisfies a H\"older-type condition, so the conditions of Theorem $2.39$ is satisfied. By the positivity of the operator $v_t^\vee \ast$ and the Hardy-Littlewood inequality, one obtains the $(2.47)$ from \cite{mirek2020bootstrapping}.
    Now, fix $1 = q_0 < q_1 = p \leq 2$; in order to prove \eqref{only-analytic-ionescu-wainger-long-variation} it is enough to verify that the constant $\mathbf{a}$ that appears in the statement of Theorem 2.39 is finite. Assuming that $(S_i)_{i \in \ZZ}$ are taken the same as earlier, the short discussion after this theorem shows that $\mathbf{a}$ can be taken to be:
    \begin{equation}
      \sum_{l = 0}^\infty \sum_{j \in \ZZ} 2^{-\frac{(q_1 - 1) l}{2}} \min(1, 2^l 2^{-|j|/d})^{q_1 - 1} 
    \end{equation}
    which is naturally finite.

  \textit{\textbf{Step 10:} L\'epingle's inequality resolves the large scale case.} At last, we are left with showing that \eqref{only-analytic-ionescu-wainger-large-range} is satisfied. The trick is to pass from the multiplier $\Pi_s [v_{k^\epsilon}(\cdot) \varphi(2^{2^{\epsilon s / A_0}} \cdot)]$ to the product:
    \begin{equation}
      \sum_{k \geq 2^{2^{3s \rho}}} \norm{ \Pi_s [v_{\lfloor 2^{k^\epsilon} \rfloor}(\cdot) \varphi_{2^{d' \epsilon s / A_0}}](\beta) - \Bigl( \sum_{a \in [Q_s]} (v_{\lfloor 2^{k^\epsilon} \rfloor} \cdot \Phi_s)(\beta - \frac{a}{Q_s}) \Bigr) \Pi_s[\varphi_{2^{d' \epsilon s / A_0}}](\beta) }_{M^p(\ZZ)} \lesssim 1
    \end{equation}
    where $\Phi_s$ is a smooth mollifier adapted to the interval $[-\frac{1}{10Q_s}, \frac{1}{10 Q_s}]$. Since we are in the large scale, the supports of all summands inside $\Bigl( \sum_{a \in [Q_s]} (v_{\lfloor 2^{k^\epsilon} \rfloor} \cdot \Phi_s)(\beta - \frac{a}{Q_s}) \Bigr)$ or $\Pi_s[\varphi_{2^{d' \epsilon s / A_0}}](\beta)$ are disjoint, which together with a convexity argument in physical space justifies the identity above.
    So, Proposition \ref{sampling-principle-magyar-stein-wainger} together with \eqref{quasi-triangle-inequality} and \eqref{l1-control-on-operators-in-f} allows to bound:
    \begin{equation}
      \norm{U \Biggl( \Bigl( \sum_{a \in [Q_s]} (v_{\lfloor 2^{k^\epsilon} \rfloor} \cdot \Phi_s)(\beta - \frac{a}{Q_s}) \Bigr)^\vee \ast f(x) : k \geq 2^{2^{3 s \rho}} \Biggr)}_{l^p(\ZZ)} \lesssim r(U) \norm{f}_{l^p(\ZZ)}
    \end{equation}
    by reducing the above inequality to:
    \begin{equation}
      \norm{U \bigl( v_{\lfloor 2^{k^\epsilon} \rfloor}^\vee \ast f(x) : k \geq 2^{2^{3 s \rho}} \bigr)}_{L^p(\RR)} \lesssim r(U) \norm{f}_{L^p(\RR)}.
    \end{equation}
    However one may easily see that $v_t^\vee \ast f$ is the same as $M_t(f)$, see \eqref{lepingle-inequality-radon-averaging-operator}. Therefore the above inequality is satisfied and \eqref{ionescu-wainger-inequality} is responsible for handling the remaining factor $\Pi_s[\varphi_{2^{d' \epsilon s / A_0}}](\beta)$. This concludes our $l^p$-theory.
\end{proof}

\section{Divergence in $L^1$} \label{section-8}

Due to Sawyer's principle, we want to show that for arbitrary $C$, one has a measure preserving system $(X, \mu, T)$ and function $f \in L^1(X)$, so that:
\begin{equation}
  \norm{\sup_{M \in \NN} \bigg| \frac{1}{|\PP_n \cap [M]|} \sum_{p \in \PP_n \cap [M]} f(T^p x) \bigg|}_{L^{1, \infty}(X)} > C \norm{f}_{L^1}.
\end{equation}
We want to use the aforementioned Theorem 3.1. of \cite{lavictoire2011universally}. For this reason, let us justify that conditions $(3.1)-(3.5)$ from LaVictoire's work are satisfied, this will be sufficient to show that the investigated sequence does not satisfy the $L^1 \to L^{1, \infty}$ bounds. In the first two cases one needs to provide two sequences $(p_j)_{j \in \NN}$ and $(q_j)_{j \in \NN}$ so that density of residues coming from the sequence of primes of form $x^2 + ny^2$ tends with $j \to \infty$ respectively to $0$ and $1$, respectively. Conditions $(3.3)$ and $(3.4)$ are the most technical, they more or less describe how residues modulo $q_j$ and their sums are separated when $j \to \infty$. Nevertheless, it is known from the work of LaVictoire that when the limiting behaviour of the differences between consecutive residues is Poisson, then one gets $(3.3)$ and $(3.4)$ for free. Finally, the last condition says that all residues that are hit by $\PP_n$ are hit sufficiently often.

So, pick $(p_j)_{j \in \NN}$ to be an increasing sequence of primes such that for all $j \in \NN$: $\Bigl( \frac{-n}{p_j} \Bigr) = 1$, and choose $(q_j)_{j \in \NN}$ to be highly composite so that:
\begin{enumerate}
  \item For every $j$, $q_j$ is odd, squarefree and $-n$ is a quadratic residue that is coprime with respect to every prime divisor of $q_j$;
  \item The sequence $q_j^{-1} \varphi(q_j)$ converges to $0$ as $j \to \infty$.
\end{enumerate} 
Next, for every $Q$ so that all prime divisors $p$ of $Q$ satisfy $\Bigl( \frac{-n}{p} \Bigr) = 1$, let $\Lambda_Q$ represent the set of coprime residues with respect to $Q$.  For $(3.1)$ we use the fact that:
\begin{equation}
  \prod_{p \in \PP : (\frac{-n}{p}) = 1} \frac{p - 1}{p} = 0,
\end{equation}
due to the Chebotarev density theorem for $\QQ(\sqrt{-n})$. $(3.2)$ is coming from the convergence $\frac{\varphi(p)}{p} \to 1$ for primes, meanwhile $(3.3)$ and $(3.4)$ follow from the fact that $\PP(\Lambda_{q_j}^\gamma)$ converges to the Poisson distribution with respect to $\gamma$ as $j \to \infty$ (see Lemma $3.2.$ and the equation above it in \cite{lavictoire2011universally}). Originally, this fact was established by Hooley in \cite{hooley1965difference}, we mimic LaVictoire there in applying Hooley's work. Ultimately, the only thing that requires justification is the inequality:
\begin{equation} \label{nontrivial-condition-lavictoire}
  \lim_{m \to \infty} \frac{1}{|\PP_n \cap [m]|} \bigg| \{ p \in \PP_n \cap [m] : p \equiv a \pmod Q \} \bigg| \gtrsim \frac{1}{|\Lambda_Q|}
\end{equation}
whenever $Q$ is a product of distinct elements from the sequences $(p_j)_{j \in \NN}$ and $(q_j)_{j \in \NN}$. 
A standard partial summation argument allows us to obtain \eqref{prime-ideals-vs-prime-passing} from the following:
\begin{equation}
  \sum_{p \in \PP_n \cap [m]} e\Bigl(\frac{a p}{q}\Bigr) = S(a, q) \frac{\text{li}(m)}{R_n \varphi_2(q_0)} + O (m \exp(-c \sqrt{\log m})), \\
\end{equation}
where $\text{li}(m)$ represents the logarithmic integral:
\begin{equation}
  \int_2^m \frac{1}{\log t} dt.
\end{equation}
Using Fourier inversion, we compute the size of the set:
\begin{equation}
  \begin{aligned}
    \bigg| \{p \in \PP_n \cap [m] : p \equiv b \pmod Q \} \bigg| &= \frac{1}{Q} \sum_{p \in \PP_n \cap [m]} \sum_{a \pmod Q} e\Bigl(\frac{a (p - b)}{q}\Bigr)  \\
    &= \frac{1}{Q} \sum_{a \pmod Q} e\Bigl(\frac{-ba}{q}\Bigr) \sum_{p \in \PP_n \cap [m]} e\Bigl(\frac{ap}{q}\Bigr)  \\
    &= \frac{1}{Q} \sum_{a \pmod Q} S(a, q) e\Bigl(\frac{-ba}{q}\Bigr) \frac{\text{li}(m)}{R_n \varphi_2(q_0)} \\
    &+ O (m \exp(-c \sqrt{\log m})).
  \end{aligned}
\end{equation}
The later average: 
\begin{equation}
  \frac{1}{Q} \sum_{a \pmod Q} S(a, q) e\Bigl(\frac{-ba}{q}\Bigr)
\end{equation}
is the same as
\begin{equation}
  \bigg| \{ x, y \in [Q] : x^2 + ny^2 \perp Q, x^2 + ny^2 \equiv b \pmod Q \} \bigg|.
\end{equation}
If we manage to show that the value of this expression does not depend on $b$, then we will have shown \eqref{nontrivial-condition-lavictoire}. Since $Q$ is product of distinct prime factors and our claim has a local nature (due to the Chinese remainder theorem), one need only show that for every odd prime $p$ with $\Bigl( \frac{-n}{p} \Bigr) = 1$, the quantity:
\begin{equation} \label{lavictorie-last-condition-quantity}
  \bigg| \{ x, y \in [p] : x^2 + ny^2 \perp p, x^2 + ny^2 \equiv b \pmod p \} \bigg|
\end{equation}
does not depend on $b$, provided $b$ is coprime to $p$. So, suppose that $-n \equiv w^2 \pmod p$; we want to solve the congruence:
\begin{equation}\label{lavictorie-last-condition-quantity-2}
  (x - wy)(x + wy) \equiv b \pmod p.
\end{equation}
It is straightforward to see that the number of solutions \eqref{lavictorie-last-condition-quantity-2} and thus \eqref{lavictorie-last-condition-quantity} is precisely $p - 1$. That means we have checked all the conditions to apply Theorem \ref{divergence-theorem-special-prime-numbers} at LaVictoire \cite{lavictoire2011universally}.

\appendix

\section{Introduction to algebraic number theory} \label{appendix}

In this appendix we provide a little dictionary of terms coming from algebraic number theory, which we believe will help to understand the paper for those whose background is more analytic:
\begin{enumerate}
  \item The number field $K$ is an extension of $\QQ$ so that $\dim_{\QQ} K$ is finite. Classical examples of number fields are $\QQ(\sqrt{-l})$ or $\QQ(\exp(\frac{2 \pi i}{l}))$ for an integer $l$.
  \item An element $a \in K$ is called \textit{integral} if it is a root of a monic polynomial with integer coefficients. One can prove that the set of integral elements of field $K$ is closed under addition, subtraction and multiplication; we will denoted this ring by $\mathcal{O}_K$.
  \item For a commutative ring $R$ with identity, we define the $R$-module $M$ to be an abelian group with respect to addition, equipped with multiplication $\cdot : R \times M \to M$ so that for all $r, s \in R$ and $x, y \in M$:
  \begin{equation}
    r \cdot (x + y) = r \cdot x + r \cdot y
  \end{equation}
  \begin{equation}
    (r + s) \cdot x = r \cdot x + s \cdot x
  \end{equation}
  \begin{equation}
    rs \cdot x = r \cdot (s \cdot x)
  \end{equation}
  \begin{equation}
    1 \cdot x = x
  \end{equation}
  \item \textit{A fractional ideal} is an $\mathcal{O}_K$-submodule $J$ of $K$ such that there exists a non-zero $r \in \mathcal{O}_K$ with $rJ \subset \mathcal{O}_K$. We will denote the set of these ideals by $I_K$. 
  \item \textit{A prime ideal} is an ideal $\mathfrak{p} \subsetneq \mathcal{O}_K$ so that for any two $a, b \in \mathcal{O}_K$ with $ab \in \mathfrak{p}$, either $a$ or $b$ is an element of $\mathfrak{p}$.
  \item The product of two fractional ideals $I$ and $J$ is defined as:
  \begin{equation}
    IJ = \{ a_1 b_1 + \ldots + a_k b_k : \forall_i a_i \in I, b_i \in J \}.
  \end{equation}
  \item By the above, the set of fractional ideals form a commutative semigroup; in our case, we are dealing with rings $\mathcal{O}_K$, so for every element $I$ in $I_{\mathcal{O}_K}$ one can find an inverse ideal $J$ so that $IJ = \mathcal{O}_K$.
  \item An important theorem in algebraic number theory says that ideals in $\mathcal{O}_K$ obey \textit{unique factorization}, so that every ideal can be written as a product of prime ideals. Furthermore, every fractional ideal can be written uniquely as a product of prime ideals and their inverses.
  This gives an analogy between the arithmetic situation for integers: prime ideals are to rings $\mathcal{O}_K$ the analogues of primes in $\ZZ$, ideals correspond to integers and fractional ideals correspond to rational numbers (up to sign).
  \item The norm of an ideal $\mfn \subset \mathcal{O}_K$ is simply the size of quotient group $\mathcal{O}_K / \mfn$. We will denote it by $N \mfn$. This norm respects product $N (\mfn \mathfrak{k}) = N \mfn N \mathfrak{k}$, therefore one can extend this definition to fractional ideals. Moreover for $a \in K \backslash \{ 0 \}$, we define $Na = N((a))$, i.e. the norm of the element $a$ is the norm of the principal ideal generated by $a$.
  \item We will say that two fractional ideals are \textit{coprime} when there is no joint prime ideal factor in the corresponding factorizations.
  \item For a general number field $K$, ideals $I \subset \mathcal{O}_K$ may not be \textit{principal} (i.e.~generated by single element). The set of ideals are closed with respect to actions of multiplication and taking inverses, so the group of these ideals will be called $\mathcal{P}_K$. One of the main results in algebraic number theory says that the quotient group $I_K / \mathcal{P}_K =: \text{Cl}(K)$ is finite.
  \item Last but not least, by a Hecke character we will mean a function $\chi: I_K \to S^1$, and so that $\chi(\mfa \mfb) = \chi(\mfa) \chi(\mfb)$ for all $\mfa$ and $\mfb$ in $I_K$ so that for some $q \in \ZZ_+$, $\chi$ maps the set of principal ideals with generators congruent to $1$ modulo $q$ (denote it $\mathcal{P}_K^+(q)$) to $1$.
\end{enumerate}

\printbibliography
\end{document}